\newtheorem{theorem}{Theorem}[section]
\newtheorem{lemma}[theorem]{Lemma}
\newtheorem{corollary}[theorem]{Corollary}
\newtheorem{proposition}[theorem]{Proposition}
\newtheorem*{remark_}{Remark}
\newtheorem*{remarks_}{Remarks}
\newtheorem*{theoremA}{Theorem A}
\newtheorem*{theoremB}{Theorem B}
\newtheorem*{lemmaC}{Lemma C}
\newtheorem*{theoremD}{Theorem D}
\theoremstyle{definition}
\newtheorem{example}[theorem]{Example}
\numberwithin{equation}{section}
\newcommand{\spa}{\operatorname{span}}
\newcommand{\Ran}{\operatorname{Ran}}
\newcommand{\rank}{\operatorname{rank}}
\newcommand{\clos}{\operatorname{clos}}
\newcommand{\limst}{\operatorname{lim}^*}
\newcommand{\slw}{\operatorname{Slow}}
\newcommand{\tht}{\theta}   %       !!!!!!!!!!!!!!!!!!!!!!!!!!!!!!!!
\newcommand\defin {\overset {\text {\rm def} }{=}}
\newcommand\precd {\overset {d}{\prec}}
\newcommand{\A}{\mathcal{A}}
\newcommand{\LL}{\mathcal{L}}
\newcommand{\T}{\mathcal{T}}
\newcommand\beqn{\begin{equation}}
\newcommand\neqn{\end{equation}}
\newcommand{\he}{\mathcal{H}(E)}
\newcommand{\al}{\alpha}   %       !!!!!!!!!!!!!!!!!!!!!!!!!!!!!!!!
\newcommand{\be}{\beta}   %       !!!!!!!!!!!!!!!!!!!!!!!!!!!!!!!!
\newcommand{\de}{\delta}   %       !!!!!!!!!!!!!!!!!!!!!!!!!!!!!!!!
\newcommand{\la}{\lambda}
\newcommand{\eps}{\varepsilon}
\newcommand{\si}{\sigma}   %       !!!!!!!!!!!!!!!!!!!!!!!!!!!!!!!!
\newcommand{\wt}{\widetilde } %       !!!!!!!!!!!!!!!!!!!!!!!!!!!!!!!!
\newcommand{\cO}{\mathcal{O} } %       !!!!!!!!!!!!!!!!!!!!!!!!!!!!!!!!
\newcommand{\cD}{\mathcal{D}}
\newcommand{\BR}{\mathbb{R} } %       !!!!!!!!!!!!!!!!!!!!!!!!!!!!!!!!
\newcommand{\BN}{\mathbb{N} }
\newcommand{\NN}{\mathbb{N} }
\newcommand{\BZ}{\mathbb{Z} }
\newcommand{\RR}{\Bbb R}
\newcommand{\BC}{\Bbb C}
\newcommand{\sm}{\setminus}
\renewcommand{\phi}{\varphi}
\renewcommand{\Re}{\operatorname{Re}}
\renewcommand{\Im}{\operatorname{Im}}
\newcommand{\cDA}{\cD(\A)}
\newcommand{\cDL}{\cD(\LL)}
\newcommand{\an}{\mathbb{A}}
\newcommand{\BT}{\mathbb{T}}
\begin{document}

% \sloppy

\title{Completeness of  rank one perturbations
of normal operators with lacunary spectrum}

% SHORT TITLE:    [Completeness of perturbations of normal operators]

\author{Anton D. Baranov, Dmitry V. Yakubovich}
% \author{}

\pagestyle{fancy}
\lhead{\small A.D. Baranov, D.V. Yakubovich}
\rhead{\small Completeness of perturbations of normal operators}

\maketitle

\begin{abstract}
Suppose $\A$ is a compact normal operator
on a Hilbert space $H$ with certain lacunarity
condition on the spectrum (which means, in particular, that
its eigenvalues go to zero exponentially fast), and let $\LL$ be
its rank one perturbation. We show that either infinitely many
moment equalities hold or the linear span of root vectors of $\LL$, corresponding
to non-zero eigenvalues, is of finite codimension in $H$. In contrast to classical
results, we do not assume the perturbation to be weak.

\medskip

\noindent {\bf M.S.C.(2000):}  Primary: 42A65; Secondary: 42C30

\noindent {\bf Keywords:} selfadjoint operator, rank one perturbation,
completeness of eigenvectors, P\'olya peaks
\end{abstract}

{}
\vskip-2cm

\section{Introduction and main results}

\subsection{The main result}

Let $\LL$ be a compact operator on a separable Hilbert space $H$.
We will say that $\LL$ is \textit{complete} if its root vectors,
corresponding to non-zero
eigenvalues, are complete in $H$.
(Notice that the point $0$ of the spectrum of a compact operator plays a special role.)
We will say that $\LL$ is \textit{nearly complete} if
the root vectors of $\LL$, corresponding to non-zero
eigenvalues,  span a subspace of $H$ of finite codimension.
One can observe that for any positive integer $N$, the closed linear span
$M(\LL)$ of root vectors of $\LL$, corresponding to non-zero eigenvalues,
is contained in $\big(\ker \LL^{*N}\big)^\perp$.
So, whenever $\ker \LL^*\ne 0$, $\LL$ cannot be complete and we only can expect the
near completeness.

If one wishes to apply the Fourier method to a linear evolution equation
associated with a non-normal compact operator $\LL$, the first obstacle is that eigenvectors of $\LL$
need not form an orthogonal basis. The completeness of $\LL$ is probably the weakest
substitute of this property. The strongest
one in the non-normal case is the Riesz basis property, and there is a whole range
of intermediate properties, related with availability of different linear summation
methods, such as Ces\`aro or Abel summability.
Most general abstract sufficient conditions for completeness are
due to Keldy\v s \cite{Keldys, Keldys71} and Macaev \cite{Mats61, Mats64}.
A good exposition of these results by Macaev and
their generalizations to operator pencils can be found in
\cite{MatsMog71}.

\begin{theoremA} [Keldy\v s, 1951]
Let $A$, $S$ be compact Hilbert space operators. Suppose $A$ is
normal, belongs to a Schatten ideal $\mathfrak {S}_p$,
$0<p<\infty$, and its spectrum is contained in a finite union of
rays $\arg z=\al_k$, $1\le k\le n$. Suppose $\ker A=\ker(I+S)=0$.
Put $L=A(I+S)$. Then the operators $L$ and $L^*$ are complete.
\end{theoremA}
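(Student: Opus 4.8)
The plan is to reduce to the classical entire-function criterion for completeness and then combine a single resolvent estimate with a Phragm\'en--Lindel\"of argument in thin sectors. I will prove that $L$ is complete; the assertion for $L^*=(I+S^*)A^*$ follows by the mirror argument, since $A^*$ is normal with spectrum on the reflected rays $\arg z=-\al_k$ and $\|(I-zL^*)^{-1}\|=\|(I-\bar zL)^{-1}\|$. Note first that, $S$ being compact with $\ker(I+S)=0$, the operator $I+S$ is invertible (it is Fredholm of index zero), hence so is $I+S^*$, and $\ker L=\ker L^*=0$. Recall next that $L$ is complete iff the only $y\in H$ for which $z\mapsto\langle(I-zL)^{-1}x,y\rangle$ is entire for \emph{every} $x\in H$ is $y=0$: the principal part of $(I-zL)^{-1}$ at a pole $1/\mu_j$ is assembled from the Riesz projection onto the root subspace $\mathcal R_j$ of $L$, so that scalar function is entire for all $x$ precisely when $y\perp\bigvee_j\mathcal R_j=M(L)$. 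So I fix $y\perp M(L)$ and must show $y=0$.

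Since $A\in\mathfrak S_p$ and $I+S$ is bounded, $L\in\mathfrak S_p$; put $q=\lceil p\rceil$ and let $D(z)=\det_q(I-zL)$ be the regularized determinant, which is entire of finite order --- order $\le q$ by the elementary Schatten bound, the sharp order $\le p$ being unnecessary here --- with $D(0)=1$ and zero set exactly $\{1/\mu_j\}$ (algebraic multiplicities). Moreover $\Delta(z):=D(z)(I-zL)^{-1}$ continues to an entire operator-valued function of finite order, so that $F_x(z):=\langle(I-zL)^{-1}x,y\rangle=\langle\Delta(z)x,y\rangle/D(z)$. Since $F_x$ is entire by the first paragraph while numerator and denominator are entire of finite order, $F_x$ is entire of finite order, which is a routine consequence of the minimum-modulus theorem applied to $D$.

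The key estimate exploits the special form of $L$. From $L(I+S)^{-1}=A$ one gets $(I-zL)(I+S)^{-1}=(I+S)^{-1}-zA$, so $(I-zL)^{-1}=(I+S)^{-1}\bigl[(I+S)^{-1}-zA\bigr]^{-1}$; writing $(I+S)^{-1}=I+T$ with $T$ compact, $(I+S)^{-1}-zA=(I-zA)\bigl(I+(I-zA)^{-1}T\bigr)$. As $A$ is normal with $\si(A)\subset\{0\}\cup\bigcup_k\{re^{i\al_k}:r>0\}$, for $\arg z$ at distance $\ge\de$ from every $-\al_k\pmod{2\pi}$ we have $\|(I-zA)^{-1}\|\le1/\sin\de$ uniformly in $|z|$, while $(I-zA)^{-1}\to0$ strongly as $|z|\to\infty$, uniformly over that angular range; since $T$ is compact this forces $\|(I-zA)^{-1}T\|\to0$ there. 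Hence there is $R_\de$ such that $\|(I-zL)^{-1}\|\le 2\|(I+S)^{-1}\|/\sin\de$, and so $|F_x(z)|\le C_\de\|x\|\,\|y\|$, for all $|z|\ge R_\de$ with $\arg z$ in that range. Thus $F_x$ is bounded on the whole plane outside the $n$ sectors of opening $2\de$ bisected by the rays $\arg z=-\al_k$. Choosing $\de$ with $2\de<\pi/q$, Phragm\'en--Lindel\"of applied to the finite-order entire function $F_x$, bounded on the two bounding rays of each thin sector, yields boundedness inside them too. Hence $F_x$ is bounded on $\BC$, so constant, so $F_x(z)\equiv F_x(0)=\langle x,y\rangle$; expanding $(I-zL)^{-1}=\sum_{n\ge0}z^nL^n$ near $0$ gives $\langle Lx,y\rangle=0$ for all $x$, i.e. $L^*y=0$, whence $y\in\ker L^*=0$. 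The mirror argument gives $L^*$ complete.

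The one genuinely delicate point is the resolvent estimate: it is essential that $L=A(I+S)$ with $S$ compact, not merely $L\in\mathfrak S_p$, so that $\|(I-zL)^{-1}\|$ is bounded \emph{uniformly over an entire sector} rather than only along individual rays. That uniformity is exactly what lets one avoid any Phragm\'en--Lindel\"of estimate in the (possibly very wide) complementary sectors and instead shrink the exceptional sectors to arbitrarily small opening, where the argument is trivial; compactness of $S$ is used precisely at the step $\|(I-zA)^{-1}T\|\to0$, which would fail for general bounded $T$. Everything else --- the entire-function bookkeeping of the middle paragraph and the Phragm\'en--Lindel\"of of the third --- is routine.
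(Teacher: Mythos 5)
The paper states Theorem A as a classical result and gives no proof of its own (it cites Keldy\v s and Markus's book), so there is nothing to compare line by line; your argument is, as far as I can check, correct and is essentially the standard Keldy\v s-type proof — the same circle of ideas (uniform resolvent bound outside arbitrarily thin sectors around the rays, finite order via the regularized determinant $\det_q(I-zL)$, Phragm\'en--Lindel\"of, Liouville) that the paper itself invokes in proving its Theorem~1.1 and attributes to Keldy\v s. Your identification of where $\ker(I+S)=0$ enters — namely that invertibility of $I+S$ plus compactness of $T=(I+S)^{-1}-I$ yields $\|(I-zA)^{-1}T\|\to 0$ and hence a resolvent bound on a full sector rather than on individual rays — matches exactly the paper's own remark that this hypothesis is what makes the power estimates (and hence the spectral localization) possible, cf.\ its Example~4.5.
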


The original statement by Keldy\v s referred only to the case of a
selfadjoint operator $A$; the above formulation appears, for
instance, in \cite{markus-book}.
A perturbation of a compact operator $A$ of the form $A(I+S)$ or
$(I+S)A$, with $S$ compact, is called a {\it weak perturbation}.
Macaev's theorems also concern weak perturbations.
In \cite{MatsMogul76}, Macaev and Mogul'skii give an explicit
condition on the spectrum of $A$, equivalent to the property that
all weak perturbations of $A$ with $\ker(I+S)=0$ are complete.

Our results deal with the
situation which is much more special than in the Keldy\v s and
Macaev theorems: namely, we consider one-dimensional perturbations
of normal operators, which are not
necessarily weak, and also treat the case of nontrivial kernels.
In this case the spectral theory of one-dimensional perturbations
of normal (even of selfadjoint) operators
becomes rich and complicated (see \cite{baryak, bbby} where a functional
model for such perturbations is constructed).

Note that one of the basic methods to prove the near completeness
of an operator is to obtain growth estimates of its resolvent and
then to apply some appropriate results from the theory of entire
functions. This is essentially the method used by Keldy\v s and
Macaev (for a general statement of this type see \cite[Theorem
XI.9.29]{Dunf_Schwarz}). In the present paper we consider
one-dimensional perturbations of normal operators with very sparse
(lacunary) spectra. Using the resolvent estimates we will show
that in this case a one-dimensional perturbation is nearly
complete unless it has strong degeneracy.
This phenomenon is related to the lacunarity of the spectrum and does
not appear in general.

Let $\{\la_n\}$ be a sequence of complex numbers. We will say that
this sequence is \textit{lacunary} if there is a positive constant
$\eps$ such that $|\la_n-\la_m|\ge \eps \max(|\la_m|,|\la_n|)$ for
all indices $n\ne m$. This is equivalent to the condition that for
some $\de>0$, the discs $B(\la_n, \de |\la_n|)$ are disjoint. Such
sequence can accumulate only to $0$ and to $\infty$. If $\la_n$
tend to zero as $n\to+\infty$ and are numbered so that the moduli
$|\la_n|$ decrease, then they decay exponentially fast, moreover,
there exist some $\si\in (0,1)$ and some $B\ge 1$ such that
$|\la_m/\la_n| \le B \si^{m-n}$ whenever $m\ge n$.

Suppose that $\A$ is a compact normal operator with trivial kernel. By the Spectral Theorem,
\beqn
\label{sp-decompA}
\A=\sum_{n\in \BN} s_n P_n,
\neqn
where $\BN=\{1,2,\dots\}$, $s_n\ne 0$, $s_n\to 0$, and $P_n$ are finite dimensional orthogonal projections
in $H$ such that $P_nP_m=0$ for $m\ne n$ and $\sum_n P_n=I$.
Our main object of study will be a one-dimensional perturbation of $\A$
of the following form:
\beqn
\label{LL}
\LL x = \A x + \langle x, b \rangle\, a,
\qquad a,b\in H.
\neqn
To formulate our results, we need to introduce
the following sequence of ``moment'' equations
\[
\label{seqs-eqs}
\begin{aligned}
\sum_n s_n^{-1} \langle P_n a, b\rangle = -1, \quad \qquad\qquad  ({\rm M }_1) \\
\sum_n s_n^{-k} \langle P_n a, b\rangle =0,
\quad \qquad\qquad\,  ({\rm M }_k)
\end{aligned}
\]
$k=2,3, \dots$.
 Note that, for a general one-dimensional perturbation,
the above series need not converge.

Our first main result is Theorem~\ref{3to1} below. It might be
instructive to precede it with two simpler statements.

\begin{theorem}
\label{thm0angles} Let $\A$ be a normal operator given
by~\eqref{sp-decompA} which belongs to a Schatten ideal $\mathfrak
{S}_p$, $0<p<\infty$, and whose spectrum is contained in a finite
union of rays $\arg z=\al_\ell$, $1\le \ell\le n$. Let $\LL$ be a
one-dimensional perturbation of $\A$, given by \eqref{LL}. Assume
that, for some $k\in \mathbb{N}$, we have
$$
\sum_n |s_n|^{-k} |\langle P_n a, b\rangle| <\infty,
$$
but the equality $({\rm M}_k)$ does not hold.
Then $\LL$ and $\LL^*$ are nearly complete.

Moreover, for any $\eps >0$ there is a radius $r>0$ such that the
intersection of the non-zero spectrum $\si(\LL)\sm\{0\}$ with the disc
$B(0,r)$ is contained in the union of angles $\al_\ell-\eps<\arg
z<\al_\ell+\eps$, $1\le \ell \le n$.
\end{theorem}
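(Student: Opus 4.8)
The plan is to organize everything around the perturbation determinant
\[
\phi(\la)\defin 1+\langle(\A-\la)^{-1}a,b\rangle=1+\sum_{n\in\BN}\frac{c_n}{s_n-\la},\qquad c_n\defin\langle P_na,b\rangle ,
\]
holomorphic on $\BC\sm(\{s_n\}\cup\{0\})$ with $\phi(\infty)=1$. By Sherman--Morrison, $(\LL-\la)^{-1}=(\A-\la)^{-1}-\phi(\la)^{-1}\langle\,\cdot\,,(\A^*-\bar\la)^{-1}b\rangle(\A-\la)^{-1}a$ for $\la\notin\si(\A)$, so $\si(\LL)\sm(\si(\A)\cup\{0\})$ is exactly the zero set of $\phi$, with $(\A-\la)^{-1}a$ an eigenvector at such a zero. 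The $j$-th Taylor coefficient of $\phi$ at $0$ is $\sum_n c_ns_n^{-j-1}$ for $j\ge1$ and $1+\sum_n c_ns_n^{-1}$ for $j=0$; the hypothesis $\sum_n|c_n||s_n|^{-k}<\infty$ makes the first $k$ of these absolutely convergent, and the failure of $({\rm M}_k)$ forces the first nonzero one, call it $\beta$, to occur at some index $\nu\le k-1$. Fix $\eps>0$ and write $\Omega_\eps$ for the set of $\la$ whose argument lies at distance $\ge\eps$ from every $\al_\ell$; for $\la\in\Omega_\eps$ and $s_n$ on the ray $\arg z=\al_\ell$ one has $|s_n-\la|\ge|\la|\sin\eps$ and $|1-\la/s_n|\ge\sin\eps$. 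Splitting the series for $\phi$ into the ranges $|s_n|\le|\la|$ and $|s_n|>|\la|$, expanding the latter in powers of $\la/s_n$ up to order $\nu$, and bounding tails by $\sum_n|c_n||s_n|^{-j}<\infty$ ($j\le k$; a dominated-convergence argument replaces the expansion in the edge case $k=1$), I would obtain $\phi(\la)=\beta\la^\nu+o(|\la|^\nu)$ as $\la\to0$ within $\Omega_\eps$. Hence there is $r=r(\eps)>0$ with $\phi\ne0$ on $\Omega_\eps\cap\{0<|\la|<r\}$; since the points of $\si(\LL)\sm\{0\}$ near $0$ are either zeros of $\phi$ or points $s_n$, all lying on the rays $\arg z=\al_\ell$, this is precisely the asserted localization. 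Moreover $\|(\A-\la)^{-1}\|=\dist(\la,\{s_n\})^{-1}\le(|\la|\sin\eps)^{-1}$ on $\Omega_\eps$, so the resolvent identity gives a polynomial bound $\|(\LL-\la)^{-1}\|\le C(\eps)|\la|^{-\nu-2}$ for $\la\in\Omega_\eps$, $0<|\la|<r(\eps)$.

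For near completeness I would describe the defect space $M(\LL)^\perp$. Any $y\perp M(\LL)$ has $P_ny\in\spa\{P_nb\}$ for every $n$, because $\Ran P_n\cap(P_nb)^\perp\subseteq\ker(\LL-s_n)\subseteq M(\LL)$; write $P_ny=t_nP_nb$. On the other hand $y\perp M(\LL)$ is equivalent to $\mu\mapsto(\LL^*-\mu)^{-1}y$ continuing holomorphically to $\BC\sm\{0\}$; inserting the resolvent formula for $\LL^*$ and examining the poles at the $\bar s_n$ and at the zeros of $\overline{\phi(\bar\mu)}$, one finds that the scalar function $\mu\mapsto\langle(\A^*-\mu)^{-1}y,a\rangle\big/\overline{\phi(\bar\mu)}$ must be holomorphic on $\BC\sm\{0\}$ and vanish at $\infty$, hence equal $G_y(1/\mu)$ for an entire function $G_y$ with $G_y(0)=0$; comparing residues at $\bar s_n$ then determines $t_n$ in terms of $G_y$ (in particular $t_n=G_y(1/\bar s_n)$ whenever $c_n\ne0$). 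Consequently $y\mapsto G_y$ is injective on $M(\LL)^\perp$, and it remains to bound the dimension of the space of admissible $G_y$. Rewriting the last relation as $\langle(\A^*-\mu)^{-1}y,a\rangle=G_y(1/\mu)\,\overline{\phi(\bar\mu)}$, using $|\langle(\A^*-\mu)^{-1}y,a\rangle|\le\|y\|\,\|a\|\,\dist(\mu,\{\bar s_n\})^{-1}$ and the lower bound $|\phi(\la)|\gtrsim|\la|^\nu$ off the rays from the first paragraph, I would get $|G_y(w)|\le C(\eps)\,\|y\|\,|w|^{\nu+1}$ for all large $w$ outside thin cones around the rays $\arg w=\al_\ell$. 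The Schatten property $\A,\LL\in\mathfrak{S}_p$ ensures that $G_y$ is of finite order (it arises as a quotient of entire functions controlled by the canonical product over $\{1/\bar s_n\}$), so a Phragm\'en--Lindel\"of argument in the thin cones about the rays promotes the bound to $|G_y(w)|\le C|w|^{\nu+1}$ on all of $\BC$; thus $G_y$ is a polynomial of degree $\le\nu+1$ with $G_y(0)=0$, and $\dim M(\LL)^\perp\le\nu+1<\infty$. Finally, since $({\rm M}_k)$ and $\sum_n|c_n||s_n|^{-k}<\infty$ are invariant under $c_n\mapsto\overline{c_n}$, the operator $\LL^*$ (with data $\A^*,b,a$) meets the same hypotheses and is nearly complete by the same reasoning.

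The step I expect to be the main obstacle is precisely this growth analysis --- verifying rigorously that $G_y$ (equivalently, the defect vectors of $\LL$ at $0$) has finite order and the predicted polynomial growth, which forces one to combine the Schatten decay of $\{s_n\}$, the lacunarity-free lower bound $|\phi(\la)|\gtrsim|\la|^\nu$ off the rays, and a careful Phragm\'en--Lindel\"of estimate in the thin cones about the rays, all while controlling the degenerate modes ($\dim\Ran P_n\ge2$ or $c_n=0$, where the formula for $t_n$ acquires a harmless extra factor). An alternative, and perhaps more direct, route is to deduce near completeness of $\LL$ and $\LL^*$ from the polynomial resolvent bound on $\Omega_\eps$ near $0$ through a resolvent-growth completeness criterion of Keldy\v s--Macaev type (cf.\ Theorem~A and \cite[Theorem XI.9.29]{Dunf_Schwarz}) applied along rays issuing from $0$ inside $\Omega_\eps$, which by the spectral localization avoid $\si(\LL)$; when $0\notin\si(\LL)$ this is just the unbounded Keldy\v s theorem applied to $\LL^{-1}=\A^{-1}(I+S)$ with $S$ of rank one and $\A^{-1}=\sum_n s_n^{-1}P_n$ normal of finite order with spectrum on finitely many rays --- which is presumably the meaning of ``using the resolvent estimates''.
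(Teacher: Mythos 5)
Your proposal is correct in substance and follows the paper's strategy: the expansion of $\phi$ at the origin after subtracting the (absolutely convergent, vanishing) lower moments, yielding $|\phi(\la)|\gtrsim|\la|^{\nu}$ off the rays, is exactly the paper's lower bound $|\beta(z)|>c_\eps|z|^{-k-1}$ for $\beta(z)=\phi(1/z)$, obtained there from the identity $\frac{1}{z-t_n}=\sum_{j\le k}t_n^jz^{-j-1}+\frac{t_n^{k+1}}{z^{k+1}(z-t_n)}$ together with a dominated-convergence estimate of the remainder (the same device you invoke for the edge case $\nu=k-1$); this gives the spectral localization at once. For near completeness the paper also runs ``polynomial growth off the angles $+$ finite order from $\mathfrak{S}_p$ $+$ Phragm\'en--Lindel\"of'', but it applies this to the entire functions $f_{x,y}(\la)=\langle(I-\la\LL)^{-1}x,y\rangle$, $x\in H$, $y\in M(\LL)^\perp$, rather than to a single function $G_y$ per defect vector. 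That choice is the key simplification: by Proposition~\ref{prop-nrl-compl}, $f_{x,y}$ is a polynomial of degree $<N$ for all $x$ and all $y\in M(\LL)^\perp$ if and only if $M(\LL)^\perp\subseteq\ker\LL^{*N}$, which is automatically finite dimensional for a rank one perturbation; and the growth of $f_{x,y}$ off the angles follows in one line from the identity \eqref{id}, which already packages the resolvent formula \eqref{reslv-L-A} and the factor $\beta^{-1}$. Quantifying over all $x$ in this way sidesteps entirely the step you single out as the main obstacle --- the residue bookkeeping defining $G_y$, its injectivity on $M(\LL)^\perp$, and the degenerate modes with $\rank P_n=2$ or $\langle P_na,b\rangle=0$ --- which in your route would require genuine extra work to make rigorous. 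Your alternative route via a Keldy\v s--Macaev type resolvent-growth criterion is precisely what the authors mean by ``standard methods based on resolvent estimates'' and is equally viable, with the caveat that \cite[Theorem XI.9.29]{Dunf_Schwarz} must be applied in a form that tolerates $0\in\si(\LL)$ and a nontrivial $\ker\LL^{*}$, i.e.\ one that yields near completeness rather than completeness.
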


This assertion  can be obtained by an application of standard
methods based on resolvent estimates, see Section~\ref{prelim}.

Another simple observation is that $\cup_n \ker \LL^{*n}$ is
orthogonal to $M(\LL)$. So, if the linear manifold
$\cup_n \ker \LL^{*n}$ is infinite dimensional, then,
obviously, $\LL$ is not nearly complete. It is easy to see that
the following fact holds.

\begin{proposition}
\label{prop1}
$\cup_n \ker \LL^{*n}$ is infinite dimensional if and
only if $b\in \Ran \A^n $ for any integer $n>0$ and the equalities
$({\rm M }_k)$ hold for all $k\ge 1$.
\end{proposition}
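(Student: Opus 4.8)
The plan is to identify $\cup_n \ker \LL^{*n}$ explicitly and then read off the equivalence. Since $\LL x = \A x + \langle x,b\rangle a$, its adjoint is $\LL^* y = \A^* y + \langle y, a\rangle b$ (recall $\A$ is normal, so $\A^* = \sum_n \bar s_n P_n$). First I would compute $\ker \LL^*$. If $\LL^* y = 0$ then $\A^* y = -\langle y,a\rangle b$; if $\langle y,a\rangle = 0$ this forces $y=0$ since $\ker \A^* = \ker \A = 0$, so a nonzero kernel element must satisfy $y = -\langle y,a\rangle (\A^*)^{-1} b$, which requires $b \in \Ran \A^*= \Ran\overline{\A}$ and, after normalizing $\langle y,a\rangle$, the consistency condition $-\langle (\A^*)^{-1}b, a\rangle = 1$, i.e. $\sum_n s_n^{-1}\langle P_n a, b\rangle = -1$, which is precisely $({\rm M}_1)$. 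So $\ker\LL^* \ne 0$ iff $b\in\Ran\A$ and $({\rm M}_1)$ holds; in that case $\ker \LL^*$ is one-dimensional, spanned by $y_1 := -(\A^*)^{-1}b$ (equivalently $\A^* y_1 = -b$).

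Next I would set up the induction on $n$ to describe $\ker \LL^{*n}$. The natural claim is: $\ker \LL^{*n}$ is nontrivial (and strictly larger than $\ker \LL^{*(n-1)}$, hence $n$-dimensional) if and only if $b\in\Ran\A^n$ and $({\rm M}_1),\dots,({\rm M}_n)$ all hold, in which case $\ker\LL^{*n}$ is spanned by vectors $y_1,\dots,y_n$ defined recursively. Suppose $\ker\LL^{*(n-1)}$ has the stated form with spanning vectors $y_1,\dots,y_{n-1}$. A vector $y$ lies in $\ker\LL^{*n}$ iff $\LL^* y \in \ker\LL^{*(n-1)}$, i.e. $\A^* y + \langle y,a\rangle b = \sum_{j=1}^{n-1} c_j y_j$ for some scalars $c_j$. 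Applying $(\A^*)^{-1}$ and comparing with the spanning set, one sees the genuinely new solutions arise only when one can solve for a $y_n$ with $\A^* y_n = -b + (\text{combination of } y_1,\dots,y_{n-1})$; iterating, this amounts to $b\in\Ran (\A^*)^n = \Ran \A^n$ together with the next moment condition $({\rm M}_n)$ being the solvability/consistency requirement. Concretely, writing $y_k$ so that $\A^* y_k = -\,(\A^*)^{-(k-1)}b + (\text{lower terms})$, the obstruction to continuing is exactly $\langle (\A^*)^{-k}b, a\rangle = \sum_n s_n^{-k}\langle P_n a,b\rangle = 0$, i.e. $({\rm M}_k)$. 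I would verify that under $({\rm M}_1),\dots,({\rm M}_n)$ and $b\in\Ran\A^n$ the vectors $y_1,\dots,y_n$ are well-defined, linearly independent, and exhaust $\ker\LL^{*n}$; conversely, if at some stage $b\notin\Ran\A^k$ or $({\rm M}_k)$ fails, then $\ker\LL^{*k}=\ker\LL^{*(k-1)}$ and the chain stabilizes, so $\cup_n\ker\LL^{*n}$ is finite dimensional.

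Finally, taking the union over $n$: $\cup_n\ker\LL^{*n}$ is infinite dimensional iff the chain $\ker\LL^*\subsetneq\ker\LL^{*2}\subsetneq\cdots$ never stabilizes, which by the above happens iff $b\in\Ran\A^n$ for every $n$ and $({\rm M}_k)$ holds for all $k\ge 1$. This is the claimed equivalence.

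The main obstacle I anticipate is bookkeeping in the inductive step: one must be careful that the ``lower-order'' terms appearing when inverting $\A^*$ do not spoil the clean identification of the $k$-th obstruction with $({\rm M}_k)$, and that $y_k$ can indeed be chosen with $\A^*y_k = -b$ modulo the span of earlier $y_j$'s rather than some more complicated right-hand side. A clean way to organize this is to observe that on the (at most countable-dimensional, $\A$-invariant) subspace generated by $b$ under $(\A^*)^{-1}$, the operator $\LL^*$ acts like $\A^*$ plus a rank-one term anchored at $b$, so $\cup_n\ker\LL^{*n}$ is governed entirely by the scalar sequence $\langle(\A^*)^{-k}b,a\rangle$ and by how deeply $b$ sits in the ranges $\Ran\A^n$; making this reduction precise removes most of the messiness. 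Convergence of the moment series is automatic here because nontriviality of $\ker\LL^{*n}$ forces $b\in\Ran\A^n$, so $(\A^*)^{-k}b$ makes sense for $k\le n$ and the pairings with $a$ are finite.
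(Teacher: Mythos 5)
Your proposal is correct and follows essentially the same route as the paper, which omits the proof but records its outcome in Proposition~\ref{prop-LL}: $\cup_n \ker \LL^{*n}=\spa\{b_1,\dots,b_m\}$ with $b_s=(\A^*)^{-s}b$, obtained by exactly the recursion you describe, the $k$-th solvability obstruction being $\langle (\A^*)^{-k}b,a\rangle$, i.e.\ $({\rm M}_k)$. The only point to add to your list of verifications is that the $b_k$ necessarily span an infinite-dimensional space once all $({\rm M}_k)$ hold: if $b$ had finite spectral support, a Vandermonde argument applied to $({\rm M}_2),({\rm M}_3),\dots$ would force $\langle P_n a,b\rangle=0$ for every $n$ in the support, contradicting $({\rm M}_1)$.
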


The next theorem shows that a stronger statement than Theorem~\ref{thm0angles} holds for the
case of a \textit{lacunary} spectrum (with arbitrary geometry).
At the same time, it can be seen as a partial converse of Proposition~\ref{prop1}.

\begin{theorem}
\label{3to1}
Let $\LL$ given by \eqref{LL}  be a one-dimensional
perturbation of a compact normal operator $\A$,
given by~\eqref{sp-decompA}, whose spectrum
is lacunary. If $\LL$ is not nearly complete, then the
equalities $({\rm M }_{k})$ are valid for all $k\ge 1$.
\end{theorem}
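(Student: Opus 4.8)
I would start from the resolvent. For $\la$ outside $\si(\A)$ and $\la\ne 0$, the Sherman--Morrison-type formula gives
\[
(\LL-\la)^{-1} = (\A-\la)^{-1} - \frac{1}{1+\phi(\la)}\,\big\langle\, \cdot\,,(\overline{\A}-\overline\la)^{-1}b\big\rangle\,(\A-\la)^{-1}a,
\]
where $\phi(\la)=\big\langle (\A-\la)^{-1}a,b\big\rangle=\sum_n \frac{\langle P_n a,b\rangle}{s_n-\la}$. Thus the non-zero spectrum of $\LL$ (apart from points of $\si(\A)$, which require a separate bookkeeping of multiplicities) is governed by the zero set of the meromorphic function $1+\phi$. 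The negation of near completeness says that $M(\LL)$ has infinite codimension; via the standard duality between cyclic subspaces and the resolvent (as in Section~\ref{prelim}, in the spirit of \cite[Theorem XI.9.29]{Dunf_Schwarz}), this forces a strong deficiency: the vector-valued function $\la\mapsto (\LL-\la)^{-1}a$, together with the scalar $1+\phi$, must have a common "inner-type" divisor carrying infinitely many zeros, and the geometry of these zeros is constrained by the lacunary location of $\{s_n\}$.

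The heart of the argument is the following dichotomy for the scalar function $g(\la):=1+\phi(\la)=1+\sum_n \frac{\langle P_n a,b\rangle}{s_n-\la}$. Because $\{s_n\}$ is lacunary, after passing to the variable $w=1/\la$ this becomes (roughly) a function with poles at the lacunary sequence $\{1/s_n\}$, which grows super-exponentially. I would use a P\'olya-peak / minimum-modulus argument on the circles $|\la|=r$ chosen in the gaps between consecutive moduli $|s_n|$: on such circles $(\A-\la)^{-1}$ is uniformly bounded by $C/\dist(\la,\si(\A))\le C'/r$, so $|\phi(\la)|$ is controlled, and one extracts from the failure of near completeness that $g$ must in fact be "small" along a sequence of such circles in a quantitative way. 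Expanding $g$ in powers of $\la$ near $0$, its Taylor coefficients are precisely (up to sign) the moment sums: $g(\la)=\big(1+\sum_n s_n^{-1}\langle P_n a,b\rangle\big)+\la\sum_n s_n^{-2}\langle P_n a,b\rangle+\cdots$, i.e. the coefficient of $\la^{k-1}$ is $\sum_n s_n^{-k}\langle P_n a,b\rangle$ for $k\ge 2$ and the constant term is $1+(\mathrm{M}_1)$-sum. So showing that all $(\mathrm{M}_k)$ hold is exactly showing that $g$ vanishes to infinite order at $0$; combined with the fact that $g$ is, near $0$, analytic (the series converge because of lacunarity once a moment fails to converge would already contradict things — this needs care), infinite vanishing order at an analytic point forces $g\equiv 0$ on a neighborhood, hence the moment equalities.

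So the structure is: (i) reduce non-near-completeness to a resolvent-growth / divisor statement for $(\LL-\la)^{-1}$; (ii) transfer this to the scalar function $g=1+\phi$, showing $g$ must have an infinite zero set that is, by lacunarity, forced to "pile up at $0$" in the sense of infinite vanishing order there; (iii) identify the Taylor coefficients of $g$ at $0$ with the moment expressions and conclude $(\mathrm{M}_k)$ for all $k$. I expect step (ii) to be the main obstacle: one must rule out the alternative that the zeros of $g$ escape to $\infty$ (in $\la$) or distribute along $\si(\A)$ without accumulating their vanishing at $0$, and this is precisely where the lacunarity of $\{s_n\}$ — equivalently the super-exponential growth of $1/|s_n|$ and the existence of large gaps in $\log|s_n|$, which is where P\'olya peaks enter — must be exploited to convert "infinitely many zeros" into "infinite order of vanishing at $0$". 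A secondary technical point is handling the eigenvalues of $\LL$ that coincide with points of $\si(\A)$ and the associated root subspaces, which affects the multiplicity count in (i) but not the final scalar conclusion.
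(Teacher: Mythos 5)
Your steps (i)--(ii) are broadly in the spirit of the paper: non-near-completeness is converted, via the resolvent identity and the fact that the functions $\la\mapsto\langle(I-\la\LL)^{-1}x,y\rangle$ ($y\in M(\LL)^\perp$) are entire of very slow growth, into the statement that the scalar function $g=1+\phi$ (the paper's $\be$ in the variable $z=1/\la$) decays faster than any power near $0$ along a large set of circles. But your step (iii) contains a genuine error. The point $\la=0$ is an accumulation point of the poles $s_n$ of $\phi$, so $g$ is \emph{not} analytic in any neighbourhood of $0$, and the moment series need not even converge a priori (the paper stresses this). Consequently ``infinite order of vanishing at an analytic point forces $g\equiv 0$'' is unavailable; worse, $g\equiv 0$ is impossible, since $g(\la)\to 1$ as $\la\to\infty$ away from $\si(\A)$. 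The correct target is not $g\equiv 0$ but the assertion that the (conditionally interpreted) sums $\sum_n s_n^{-k}\langle P_na,b\rangle$ converge and take the prescribed values, and deducing this from the decay of $g$ near $0$ is precisely the hard implication — the content of Theorem~\ref{comp-var-thm1} — which your sketch does not supply.

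Relatedly, you place the P\'olya-peaks input in the wrong half of the argument. In the paper, the smallness of $\be$ along density-one sets of radii comes from a minimum-modulus theorem for zero-order entire functions applied to $f_{x,y}$ together with the resolvent estimate of Lemma~\ref{lemma-mer-lac}; P\'olya's lemma on peaks is then used in Lemma~\ref{comp-var-lem-old} for the \emph{converse} direction: assuming $\sum_n|t_n^{-1}c_n|=\infty$, one selects peak indices $m_k$ at which $c_{m_k}$ dominates, obtains a lower bound for $\be''$ near $t_{m_k}$, and converts it (via the elementary Lemma~\ref{div-diff}) into a lower bound for $|\be|$ on an interval of radii of positive relative length — contradicting the decay. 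An induction on $\ell$ (replacing $c_n$ by $c_nt_n^{\ell}$) then yields all the moment identities. Without some substitute for this lower-bound machinery, the passage from ``$g$ is small near $0$'' to ``all $(\mathrm M_k)$ hold'' does not close, and this is exactly where the lacunarity hypothesis is actually consumed.
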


\begin{remarks_}
{\rm 1. Since $\LL^*x=\A^* x + \langle x,a\rangle b$, it follows that the same criterion holds
for near completeness of $\LL^*$, where
equalities $({\rm M }_{k})$ are literally the same.
\smallskip

2. In the case when all moment equalities
$(M_k), k\ge 1$, are fulfilled, the operator $\LL$ may be complete or incomplete.
In \cite[Theorem 1.3]{baryak},
given any compact selfadjoint operator $\A$
with simple point spectrum and trivial kernel,
a bounded rank one perturbation $\LL$ of $\A$ with real spectrum was constructed
such that $\LL$ is complete and $\ker \LL = 0$, while $\LL^*$ is even not nearly complete.
Therefore the near completeness of
$\LL$ is not equivalent to the near completeness of
$\LL^*$, even for rank one perturbations of normal operators with lacunary spectrum
we are considering here.

It is essential for the construction in \cite{baryak} that all
moment equalities hold. For the lacunary spectra this follows also
from our Theorem \ref{3to1}.

\smallskip

3. It would be interesting to know whether an analogue of Theorem
\ref{3to1} holds true for finite rank perturbations.
It also would be interesting to know whether
in the conclusion of this theorem, one can replace the completeness with
the spectral synthesis property.
\medskip
}

\end{remarks_}

%%%%%%%%%%%%%%%%%%%%%%%%%%%%%%%%%%%%%%%%%%%%%%%%%%%%%%%%%%%%%%%%%%%%%

\subsection{Sharpness of Theorem \ref{3to1}}
Our second main result says that the lacunarity condition in
Theorem \ref{3to1} cannot be weakened much.
Namely, if the spectrum of $\A$ is at least slightly more dense than
a lacunary one, then there exists a rank one perturbation, which is not
nearly complete, but already the first moment does not exist,
moreover,
\begin{equation}
\label{beg}
\sum_n |s_n|^{-1} |\langle P_n a, b\rangle| = \infty.
\end{equation}
To make the conditions on the spectrum clearer it is better to
pass to the inverses $t_n =s_n^{-1}$.
Note that the lacunarity implies that $n_T(r) = O(\log r)$, $r\to \infty$.
Here $n_T$ is the counting function of the sequence $\{t_n\}$:
$n_T=\#\{n: |t_n|<r\}$.
We show that rank one perturbations satisfying \eqref{beg}, which fail to be nearly
complete, always exist
unless $n_T(r) = O(\log^2 r)$, $r\to \infty$.

The precise formulation of our second main result is as follows:

\begin{theorem}
\label{hhn}
Let $\A_0$ be a compact selfadjoint operator, which has a
representation of the form~\eqref{sp-decompA}, where $s_n\in \mathbb{R}$, $s_n\ne 0$.
Suppose that $\rank P_n=1$ for all $n$. Put $t_n = s_n^{-1}$.
Assume
that $\inf_{n\ne k} |t_n-t_k| >0$ and that, for any $N>0$, we have
\begin{equation}
\label{beg1} \liminf_{|n|\to\infty} |t_n|^N \prod_{k: \,
\frac{1}{2} \le \frac{t_k}{t_n} \le 2, \, k\ne n}
\bigg|\frac{t_k-t_n}{t_k}\bigg| =0.
\end{equation}
Then there exists a rank one perturbation $\LL_0=\A_0 + \langle \cdot , b\rangle\, a$ of $\A_0$
such that \eqref{beg} holds,
but
$\LL_0$ is not nearly complete.
It is true, in particular, if
\beqn
\label{beglog2}
\limsup_{r\to\infty} \, \frac {n_T(r)}{\log^2 r}=+\infty.
\neqn
\end{theorem}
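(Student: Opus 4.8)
The plan is to construct $a$ and $b$ directly in the eigenbasis of $\A_0$. Write $\A_0 = \sum_n s_n\langle \cdot, e_n\rangle e_n$ with orthonormal $\{e_n\}$, and set $a = \sum_n a_n e_n$, $b = \sum_n b_n e_n$; the perturbation $\LL_0 = \A_0 + \langle\cdot, b\rangle a$ then has, as is standard (see the functional model in \cite{baryak, bbby}), its nonzero eigenvalues among the zeros of the meromorphic function $\phi(z) = 1 + \sum_n \frac{a_n \bar b_n}{s_n - z}$, equivalently, passing to $w = 1/z$ and $t_n = 1/s_n$, among the reciprocals of the zeros of an entire-type function built from $\prod_n(1 - w/t_n)$ with a perturbing sum. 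The strategy is: (i) choose the $a_n\bar b_n$ so that $\LL_0$ has only finitely many nonzero eigenvalues (or, more robustly, so that $M(\LL_0)$ omits an infinite-dimensional subspace), which forces failure of near completeness; and (ii) simultaneously arrange $\sum_n |t_n|\,|a_n b_n| = \infty$, which is exactly \eqref{beg}.

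The key mechanism is P\'olya peaks (the ``P\'olya peaks'' of the keywords): condition \eqref{beg1} says that along a subsequence $t_{n_j}$ the canonical product-type quantity $|t_{n_j}|^N \prod_{k \sim n_j}|(t_k - t_{n_j})/t_k|$ tends to $0$ faster than any power, i.e. the sequence $\{t_n\}$ has, infinitely often, a local clustering strong enough to make a naturally associated entire function extremely small at $t_{n_j}$. I would exploit this as follows. Fix a target vector, say $b \notin \Ran \A_0^m$ controlled so that $\cup \ker\LL_0^{*n}$ stays small, and build $a$ as a convergent series whose $n$-th coefficient is supported near the peak indices. Concretely, at the $j$-th peak pick a finite block $B_j$ of indices around $n_j$ on which $\{t_k\}$ clusters, put $a_k$ of size comparable to $|t_{n_j}|^{-1}$ times a slowly decaying gain (so that $\sum_{k\in B_j}|t_k a_k b_k|$ contributes a fixed positive amount to the divergent sum \eqref{beg}, giving \eqref{beg}), yet small enough in $\ell^2$-norm that $\sum_j \|a\!\restriction_{B_j}\|^2 < \infty$ and $a \in H$. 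The clustering from \eqref{beg1} is what lets these two requirements coexist: the product being tiny means a perturbation localized on $B_j$ can be large in the ``moment'' pairing while remaining small in norm.

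Then I would show that this choice makes $\LL_0$ fail near completeness. The cleanest route is to produce an infinite-dimensional subspace orthogonal to $M(\LL_0)$. I would aim to arrange that $\phi$ (the characteristic function above) has only finitely many zeros, or that its zero set is so sparse that the corresponding eigenvectors, together with the generalized kernels, cannot span a finite-codimension subspace; the resolvent/growth estimates from Section~\ref{prelim} run in reverse here, with \eqref{beg1} supplying the lower bound on $|\phi|$ off the $t_n$'s that kills potential zeros. Finally, the last sentence — that \eqref{beglog2} implies \eqref{beg1} — is a counting-function computation: if $n_T(r)/\log^2 r$ is unbounded, then near suitable radii $r_j$ the interval $[\tfrac12 r_j, 2r_j]$ contains $\gg \log^2 r_j$ points $t_k$, and estimating $\prod |(t_k-t_n)/t_k|$ from above for the most-crowded point $t_n = t_{n_j}$ in that interval (using $\inf|t_n - t_k|>0$ from below on the spacing) gives a bound like $\exp(-c\log^2 r_j) = r_j^{-c\log r_j}$, which beats $r_j^{-N} = |t_{n_j}|^{-N}$ for every fixed $N$, yielding \eqref{beg1}.

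The main obstacle I anticipate is step (i): making precise and rigorous the claim that the localized, norm-small but pairing-large choice of $a$ genuinely destroys near completeness. One must control the \emph{global} behaviour of $\phi$ (or equivalently the location of \emph{all} eigenvalues of $\LL_0$, not just those near the peaks), ensure $\ker \LL_0^{*n}$ does not accidentally become infinite-dimensional in a way that trivializes the example, and handle the convergence of all the relevant series simultaneously — the tension between ``$a\in H$'', ``\eqref{beg} diverges'', and ``$\phi$ has few zeros'' is delicate and is precisely where the quantitative strength of the P\'olya-peak hypothesis \eqref{beg1} must be spent carefully.
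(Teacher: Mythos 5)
Your reduction and the final counting step are sound: the argument you sketch for the implication \eqref{beglog2}$\Rightarrow$\eqref{beg1} is essentially the paper's Lemma~\ref{bon} and Corollary~\ref{4to5}, and the idea of supporting the perturbation on blocks of indices near the P\'olya peaks, trading the clustering of $\{t_k\}$ for divergence of the moment sum against convergence of the relevant $\ell^2$ norms, is the right general shape. But the step you yourself flag as the main obstacle --- actually forcing $\LL_0$ to fail near completeness --- is a genuine gap, and the route you propose for it would not work. Condition \eqref{beg1} is a \emph{smallness} (clustering) condition, so it cannot supply a lower bound on $|\phi|$ away from the poles; and for a function $\phi(z)=1+\sum_n c_n/(s_n-z)$ with infinitely many nonzero $c_n$ one generically \emph{cannot} avoid infinitely many zeros (for instance, if the $c_n$ eventually have constant sign there is a zero between any two consecutive poles). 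So ``arrange that $\phi$ has only finitely many zeros'' is an aim with no mechanism behind it.

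The missing idea is to prescribe the characteristic function as the \emph{reciprocal of an entire function}: one constructs a canonical product $S$ of order less than one with zeros in a subset of $\{t_n\}$ such that $\sum_{t_n\in Z_S} 1/(t_n^2|S'(t_n)|)<\infty$ while $\sum_{t_n\in Z_S}1/|t_nS'(t_n)|=\infty$ (Lemma~\ref{hon}); the partial-fraction expansion \eqref{inter} of $1/S$ then forces the coefficients $c_n=-1/S'(t_n)$, the second sum is exactly the divergent moment sum \eqref{beg}, and the resulting $\beta$ is $\const/S$, hence zero-free. Equivalently, in the de Branges model the eigenvectors of $\LL^*$ are the reproducing kernels at $Z_{A/S}=Z_A\setminus Z_S$, a co-infinite subset of an orthogonal basis, so the defect is automatically infinite --- no resolvent estimate is needed. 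The construction of $S$ is itself the delicate part: one chooses each block $T_k$ around the peak $t_{n_k}$ to be \emph{minimal} subject to $\sum_{t_n\in T_k}1/|t_nU_{k-1}(t_n)S_{T_k}'(t_n)|>1$, and this minimality, combined with $\inf_{n\ne k}|t_n-t_k|>0$, is what keeps $\sum 1/(t_n^2|S'(t_n)|)$ uniformly bounded over the blocks (with a further correction by a factor $(z-t_1)$ if necessary). Neither the $1/S$ ansatz nor the minimality argument appears in your proposal, and without them the tension you describe between membership in $H$, divergence of \eqref{beg}, and scarcity of zeros is not resolved.
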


We will show, in fact, that \eqref{beglog2} implies \eqref{beg1}
(see Corollary~\ref{4to5}).
One can express the condition \eqref{beg1} in equivalent ways, see
the remark in Subsection \ref{prelim1}.

%%%%%%%%%%%%%%%%%%%%%%%%%%%%%%%%%%%%%%%%%%%%%%%%%%%%%%%%%%%%%%%%%%%%%%%%%%

\subsection{Methods of the proof}

This work can be considered as a continuation of our papers
\cite{baryak, baryak2}, where the completeness and related
properties (e.g., the spectral synthesis) were studied for similar
class of perturbations of selfadjoint operators without an assumption on the
lacunarity of the spectrum.
%... were studied for similar class of operators by using a kind of functional model involving de Branges spaces. In these works, we considered perturbations of
%selfadjoint operators  without a condition on lacunarity of the spectum. Here we consider perturbations of general compact normal
%operators with lacunary spectrum.
As in \cite{baryak, baryak2}, here we also will consider
(singular) rank one perturbations of unbounded normal operators
with discrete spectrum and obtain parallel completeness results
for them.

The spectral analysis of
the perturbation \eqref{LL} of operator $\A$
leads to a consideration of the function
\beqn
\label{beta}
\be(z)=
1 + \langle (\A-z^{-1})^{-1} a,b\rangle =
 1 - z \langle (I-z \A)^{-1} a,b\rangle = 1+ \sum_n c_n \Big(\frac 1 {t_n-z}  - \frac 1 {t_n} \Big),
\neqn
where $t_n=1/s_n$ and $c_n= - s_n^{-2} \langle P_n a, b\rangle$. This function
is meromorphic in $\BC$.
It is easy to see that the zero set of $\be$ coincides with the set
$\{\la^{-1}: \la\in \si(\LL), \la\ne0\}$.

We will adopt the following notation.
If $A$ is a measurable subset of $[0,+\infty)$, its
\textit{linear density}
is defined as $\lim_{R\to+\infty} R^{-1}m([0,R]\cap A)$.
Given a function
$f$ on $\BC$, we will write $\limst_{z\to \infty} f(z) = w$ if
there exists a closed set $A \subset [0,+\infty)$ of linear
density one such that
\[
\lim_{z\to \infty,\, |z|\in A} f(z)=w.
\]
Our main complex variable tool
for proving Theorem~\ref{3to1}
will be the following statement.

\begin{theorem}
\label{comp-var-thm1}
Suppose that a complex sequence $\{t_n\}$
goes to infinity, is lacunary and $t_n\ne 0$ for all $n$.
Let $\varkappa\in \BC$ and let $c_n$ be any complex coefficients, not all equal to zero, such that
$\sum_n |c_n/t_n^{2}| < + \infty$. Put
\beqn
\label{beta2}
\be(z)=\varkappa+ \sum_n c_n \Big(\frac 1 {t_n-z}  - \frac 1 {t_n} \Big).
\neqn
If for any $s\in \BN$,
\beqn
\label{limst2}
\limst_{z\to \infty}  z^s \be(z) =0,
\neqn
then $\sum_n t_n^{-1} c_n= \varkappa$ and
$\sum_n t_n^k c_n=0$ for $k\in \BZ$, $k\ge 0$.
\end{theorem}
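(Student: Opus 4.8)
The plan is to extract the coefficients $c_n$ and the asymptotic moments of $\{c_n\}$ from the hypothesis $\limst_{z\to\infty} z^s\be(z)=0$ by evaluating $\be$ along carefully chosen circles that separate the poles $\{t_n\}$. Because the sequence $\{t_n\}$ is lacunary, the annuli $\{|z|\in[\rho\si^{-1},\rho]\}$ (for a suitable $\si\in(0,1)$ coming from the lacunarity constant) each contain at most boundedly many $t_n$, and for most radii $R$ the circle $|z|=R$ stays at distance $\gtrsim R$ from every $t_n$; moreover the set of such ``good'' radii can be arranged to have linear density one, so it meets the density-one set $A$ supplied by $\limst$. On such a good circle one has the crude bound $|\be(z)| \lesssim \sum_n |c_n|\,|z|\,|t_n|^{-1}|t_n-z|^{-1}$, which, using lacunarity to sum the tail geometrically, is $O(1)$ and in fact $\limst_{z\to\infty}\be(z)=0$ already forces a Cauchy-type estimate; but more importantly, for each fixed $s$ the vanishing of $z^s\be(z)$ along $A$ gives genuine decay.

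Next I would run a contour-integral / Laurent-coefficient argument. Fix $k\ge 0$ and consider $\frac{1}{2\pi i}\int_{|z|=R} z^k \be(z)\,dz$ over a good radius $R$. Expanding $\be$ and using $\frac{1}{2\pi i}\int_{|z|=R} z^k\big(\frac{1}{t_n-z}-\frac1{t_n}\big)dz$, the residue calculus picks out exactly $-\sum_{|t_n|<R} t_n^k c_n$ (the constant $\varkappa$ and the $-c_n/t_n$ terms are entire and contribute nothing for $k\ge 0$). On the other hand, since $z^{k+1}\be(z)\to 0$ along $A$ and $R\in A$ is good, the integral is bounded by $R^{k+1}\cdot o(R^{-(k+1)})\cdot(\text{length}) = o(1)$ — here one uses $\limst_{z\to\infty} z^{s}\be(z)=0$ with $s=k+1$. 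Hence $\sum_{|t_n|<R} t_n^k c_n \to 0$ as $R\to\infty$ through good radii. To upgrade this to convergence of the full series $\sum_n t_n^k c_n = 0$ one must control the "jump" contributed by the finitely many $t_n$ in each lacunary annulus: again a circle of radius slightly larger than $|t_n|$ and a residue estimate, combined with $\sum|c_n/t_n^2|<\infty$ and the geometric spacing, shows the partial sums over $\{|t_n|<R\}$ form a Cauchy sequence, so the limit along good radii is the limit of the series. The same contour argument with $k=-1$, where now $\frac{1}{2\pi i}\int z^{-1}\be\,dz$ picks up the residue at $z=0$ together with $\varkappa$, yields $\varkappa - \sum_n t_n^{-1}c_n = 0$, i.e. $\sum_n t_n^{-1}c_n=\varkappa$ (the series converging absolutely by hypothesis).

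The main obstacle, I expect, is precisely the interplay between the density-one set $A$ and the geometry of the poles: I need the good radii (those along which $\be$ is small \emph{and} along which the circle avoids all $t_n$ by a factor comparable to the radius) to still form a set of linear density one, so that it is guaranteed to be nonempty arbitrarily far out, and then I need the contribution of each lacunary block of poles to the difference of partial sums to be negligible in the limit. Both points rest on the lacunarity in an essential way — without it the annuli could contain unboundedly many poles and the circle could be forced arbitrarily close to the spectrum — which is why the theorem is false for denser spectra, consistent with Theorem~\ref{hhn}. A secondary technical point is justifying the termwise integration of the series for $\be$ on the contour, which follows routinely from the absolute convergence of $\sum|c_n/t_n^2|$ and uniform bounds on the good circles.
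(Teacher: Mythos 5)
Your contour--integral scheme correctly extracts what circles centred at the origin can see: for radii $R$ in a density-one set avoiding the poles, termwise residue calculus gives $\frac{1}{2\pi i}\int_{|z|=R}z^k\be(z)\,dz=-\sum_{|t_n|<R}t_n^kc_n$ (plus $\varkappa$ when $k=-1$), and \eqref{limst2} with $s=k+1$ makes this $o(1)$. But the step you describe as an "upgrade" is where the proof genuinely breaks, for two reasons. First, the parenthetical claim that $\sum_n t_n^{-1}c_n$ "converges absolutely by hypothesis" is false: the hypothesis is only $\sum_n|c_n|/|t_n|^2<\infty$, and the absolute convergence of $\sum_n|c_n|/|t_n|$ (and of $\sum_n|t_n^kc_n|$ for every $k\ge 0$) is precisely the hard part of the theorem, not an assumption. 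Second, circles $|z|=R$ cannot separate lacunary points of nearly equal modulus (e.g.\ $t$ and $-t$, or $t$ and $te^{i\theta}$, are far apart in $\BC$ but indistinguishable by $|z|$), so your residue computation only controls sums over \emph{blocks} of poles with comparable moduli. Block sums tending to zero does not imply convergence of the series: huge coefficients can cancel within a block, and no contour centred at the origin will detect them. Ruling this scenario out requires a \emph{lower} bound on $\be$ near the poles whenever some $|c_n/t_n|$ is large, and your proposal contains no mechanism for producing one; the "Cauchy sequence" claim is asserted, not proved.

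This missing lower bound is exactly the content of the paper's argument. In Lemma~\ref{comp-var-lem-old} the authors assume $\sum_n|c_n/t_n|=\infty$ and apply P\'olya's lemma on peaks to produce indices $m_k$ at which $c_{m_k}$ dominates all other coefficients in a quantified way; lacunarity then shows that $\be''$ is comparable to the single term $c_{m_k}(z-t_{m_k})^{-3}$ on an arc near $t_{m_k}$, and the elementary Lemma~\ref{div-diff} converts this into $|\be(z)|\gtrsim|z|^{-1}$ on a subinterval of radii of length comparable to $|t_{m_k}|$ --- contradicting even the weakest hypothesis $\limst z\be(z)=0$, since such an interval cannot be avoided by a set of linear density one. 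Once absolute convergence is in hand, the \emph{value} of the sum is obtained by dominated convergence along radii staying at distance $\gtrsim\tau|t_n|$ from all poles (close in spirit to your good-radii idea), and the statements for higher $k$ follow by induction, applying the lemma to $\wt c_n=c_nt_n^{\ell}$ and $\be_\ell(z)=z^\ell\be(z)$. If you want to salvage your approach, you would still need to import something equivalent to the P\'olya-peaks step; the contour integration alone cannot supply it.
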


It is easy to see that the conditions $\sum_n t_n^{-1} c_n=
\varkappa$ and $\sum_n t_n^k c_n=0$, $k\ge 0$, imply that $\lim z^s
\beta(z) =0$ as $|z|\to \infty$ for any $s$ and ${\rm dist}\, (z,
\{t_n\}) \ge 1$. Theorem~\ref{comp-var-thm1} shows that in the
lacunary case the converse is true. The proof of this theorem uses
a lemma on ``peaks'' of numerical sequences by P\'olya
\cite{Polya23}, which enables us to obtain some estimates of the
function $\be$ from below.

Let us mention that in \cite{Shk_1984}, Shkalikov obtained lower
estimates for meromorphic functions outside small ``exceptional''
sets, which he applied in \cite{Shk_2010} to get a new criterion
for eigenvectors of a perturbed selfadjoint operator to form a
basis with parenthesis.

\medskip

The proof of Theorem~\ref{3to1}
is based on the above Theorem~\ref{comp-var-thm1} and
deals directly with resolvent estimates.
On the other hand,
an important tool in proving Theorem \ref{hhn} is a passage to the functional model
of $\LL$ in a so-called de Branges space $\he$ of entire functions, associated to an entire
function $E$ in the Hermite--Bieler class (see Subsection~\ref{funmod} below).
The completeness property for
$\LL^*$ turns out to be equivalent to completeness of a certain system of
reproducing kernels in $\he$. This allows us to apply complex analysis tools, such as
factorizations of entire functions and partial fraction expansions of their quotients.

De Branges spaces and the related model spaces associated to meromorphic inner functions
have numerous and well-known appplications to the
spectral theory of discrete selfadjoint operators. For
instance, Makarov and Poltoratski apply this approach in~\cite{mak-polt} to get broad extensions of
several classical results
such as Borg's two spectra theorem and the Hochstadt-Lieberman theorem
concerning the unique determination of Schr\"odinger operators
on a finite interval by their spectra. In
\cite{Silva-Teschl-Toloza}, de Branges spaces were applied to model
a subclass of singular Schr\"odinger operators.
In our previous papers \cite{baryak, baryak2}, we applied the model approach to
the completeness of nonselfadjoint rank one perturbations
of a discrete selfadjoint operator.
These papers also exploit the property of spectral synthesis, which is weaker than
the existence of whatever linear summation method for expansions in eigenvectors.
Completeness problems of ''mixed'' systems were treated in
recent papers \cite{bel-men-seip, bb, bbb, bbb1}.
In particular, in \cite{bel-men-seip}, Riesz bases of reproducing
kernels in spaces of Cauchy transforms of discrete lacunary
measures in $\BC$ have been described. It seems that there are some
relations between this work and the questions we study here.

We also wish to mention here papers~\cite{Martin}
and~\cite{Silva-Toloza}, where de Branges spaces were applied to
the spectral theory in a context close to the context of
\cite{baryak, baryak2}.
\medskip

A result on completeness or near completeness of an abstract
compact operator in a Schatten class $\mathfrak {S}_p$ is
contained in \cite[Theorem XI.9.29]{Dunf_Schwarz}, where power
estimates of the resolvent in certain angles are assumed.
We
remark that, although Keldy\v s and Macaev's theorems can be
deduced from this theorem, the condition $\ker(I+S)=0$ is crucial
for getting these power estimates (see the Example~\ref{ex}
below).

We refer to \cite{AddMit, Shk_2010, Wyss}
for some recent abstract results about completeness and bases
of eigenvectors and to the books \cite{Gohb_Krein, Gohb_Kr_Volterr, Dunf_Schwarz,
markus-book} and the review \cite{Radz82} for an extensive exposition.
The papers \cite{Ionascu, FoiasJungPearcy, FangXia}
contain some general results on spectral properties of finite dimensional
perturbations of diagonal operators (in particular, on the existence of
invariant subspaces).
It seems that not much is known in general for weak perturbations with
$\ker(I+S)\ne0$ and for non-weak perturbations.

The paper is organized as follows. In Section \ref{prelim} we collect some preliminary
results relating the near completeness to properties of some entire (meromorphic)
functions defined in terms of resolvents. Section \ref{mainpr} contains the proof of
our main result, Theorem \ref{3to1}. Some additional remarks on the general
properties of nearly complete operators are given in Section \ref{addit}.
In Section \ref{singul} we introduce (unbounded) singular rank one perturbations
and state the counterpart of the main result for this case. Theorem \ref{hhn}
about the necessity of some (slightly weaker than lacunarity) sparseness
condition is proved in Section \ref{sharpr} by an application of a functional model.
\medskip

Selfadjoint operators with lacunary spectrum are not so
seldom in practice. In Section \ref{addit}, we give examples of
convolution operators on $L^2$, which are normal, compact and
lacunary. In Section \ref{singul}, we comment on Jacobi matrices,
which define unbounded selfadjoint lacunary operators. They arise,
in particular, in relation with $q$-harmonic oscillator and $q$-orthogonal polynomials.
\medskip
\\
\textbf{Acknowledgements.}
The authors express their gratitude to R.~Romanov, G.~Ros\-en\-blum and M.~Solomyak
for helpful discussions and bibliographical remarks.
We are also indebted to the anonymous referee for valuable suggestions,
which helped us to improve the exposition.

A. Baranov was supported by the grant MD-5758.2015.1 and by RFBR
grants 13-01-91002-ANF-a and 14-01-00748-a. D. Yakubovich was
supported by the project MTM2015-66157-C2-1-P and the ICMAT Severo
Ochoa project SEV-2015-0554 of the Ministry of Economy and
Competitiveness of Spain.

\section{Preliminary results on near completeness}
\label{prelim}

We start with several simple remarks which will be used in what follows.
Note that they are true for general compact operators (not necessarily with lacunary spectra).

\begin{proposition}
\label{prop-n-compl}
Let $\LL$ be a general compact operator on a
Hilbert space $H$. If $\LL$ is nearly complete, then the linear
set  $\cup_n \ker \LL^{*n}$ is finite dimensional, there is some
$k\ge 0$ such that $\ker \LL^{*n}=\ker \LL^{*k}$ for all $n>k$ and
$M(\LL)^\perp=\cup_n \ker \LL^{*n}= \ker \LL^{*k}$.
\end{proposition}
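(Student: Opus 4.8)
The plan is to exploit the finite codimension of $M(\LL)$ together with the basic inclusion $M(\LL) \subset (\ker \LL^{*n})^\perp$ noted in the introduction, which gives $\cup_n \ker \LL^{*n} \subset M(\LL)^\perp$ as an immediate consequence. Since $M(\LL)$ has finite codimension, $M(\LL)^\perp$ is finite dimensional, hence so is the linear set $\cup_n \ker \LL^{*n}$. First I would observe that $\ker \LL^{*n}$ is an increasing chain of subspaces, all contained in the fixed finite dimensional space $M(\LL)^\perp$; therefore the dimensions $\dim \ker \LL^{*n}$ form a nondecreasing bounded sequence of integers, so they stabilize: there is $k \ge 0$ with $\ker \LL^{*n} = \ker \LL^{*k}$ for all $n \ge k$. (A small point worth spelling out: once $\ker \LL^{*(k+1)} = \ker \LL^{*k}$ one gets $\ker \LL^{*n} = \ker \LL^{*k}$ for all $n > k$ by the standard kernel-stabilization argument, applying $\LL^*$; but here it also follows directly from boundedness of the dimensions.) This already yields $\cup_n \ker \LL^{*n} = \ker \LL^{*k}$, which is finite dimensional.

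It remains to prove the reverse inclusion $M(\LL)^\perp \subset \cup_n \ker \LL^{*n}$, i.e. $M(\LL)^\perp = \ker \LL^{*k}$. The idea is to pass to the quotient, or equivalently to restrict $\LL^*$ to the finite dimensional invariant subspace $N := M(\LL)^\perp$. One checks that $N$ is invariant under $\LL^*$: indeed $M(\LL)$ is invariant under $\LL$ (it is the closed span of root subspaces of $\LL$ for nonzero eigenvalues, each of which is $\LL$-invariant), hence $N = M(\LL)^\perp$ is invariant under $\LL^*$. Now $\LL^*|_N$ is a linear operator on a finite dimensional space whose only possible eigenvalues are $0$: if $\mu \ne 0$ were an eigenvalue of $\LL^*|_N$ with eigenvector $x \in N$, then $\mu$ would be an eigenvalue of $\LL^*$, hence $\bar\mu$ an eigenvalue of $\LL$, and the corresponding root subspace of $\LL$ for $\bar\mu$ would lie in $M(\LL)$; a duality argument (the root subspaces of $\LL^*$ for $\mu$ and of $\LL$ for $\bar\mu$ have equal dimension, and the former is orthogonal to $M(\LL)$ minus its own contribution) then forces $x$ to be orthogonal to itself, a contradiction. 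More cleanly: the nonzero spectrum of $\LL^*$ restricted to $N$ must be empty because the generalized eigenspaces of $\LL^*$ at nonzero points are the orthogonal complements within the relevant finite dimensional reducing pieces, and all of them are ``used up'' inside $M(\LL)^{\perp\perp} = M(\LL)$; I would make this precise by noting $H = M(\LL) \oplus N$ need not be an orthogonal decomposition into $\LL$-reducing subspaces, so instead I argue via: every nonzero generalized eigenvector of $\LL^*$ lies in $N^\perp = M(\LL)$...

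Let me restate the crucial step more carefully, since this is the main obstacle. The difficulty is that $M(\LL)$ and $N = M(\LL)^\perp$ need not jointly reduce $\LL$, so one cannot naively say ``$\LL^*|_N$ has spectrum $\{0\}$''. The clean route is: $\LL^*|_N$ is a compact (here finite-rank, as $N$ is finite dimensional) operator on $N$; if it had a nonzero eigenvalue $\mu$ with eigenvector $x\in N$, then $\LL^* x = \mu x$, so $x$ is a genuine eigenvector of $\LL^*$ at $\mu \ne 0$, whence $\bar\mu \in \sigma(\LL)$ and the Riesz projection $Q$ of $\LL$ onto the root subspace $M_{\bar\mu}(\LL) \subset M(\LL)$ at $\bar\mu$ satisfies $Q^* x = x$ (because $Q^*$ is the Riesz projection of $\LL^*$ at $\mu$ and $x$ lies in its range); but $\operatorname{Ran} Q^* \perp \ker Q = $ (sum of the other root subspaces and the rest), and in particular since $\operatorname{Ran} Q \subset M(\LL)$ we get $\operatorname{Ran} Q^* = (\ker Q)^\perp \supset M(\LL)^\perp = N$ is false in general — so instead use: $x \in N = M(\LL)^\perp \subset (\operatorname{Ran} Q)^\perp = \ker Q^*$, forcing $x = Q^* x = 0$, a contradiction. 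Hence $\sigma(\LL^*|_N) = \{0\}$, so $\LL^*|_N$ is nilpotent of some order $k$ (finite dimensionality), i.e. $(\LL^*)^k x = 0$ for all $x \in N$, which gives $N \subset \ker \LL^{*k} \subset \cup_n \ker \LL^{*n}$. Combined with the first paragraph this proves $M(\LL)^\perp = \cup_n \ker \LL^{*n} = \ker \LL^{*k}$, and adjusting $k$ upward if necessary we also get $\ker \LL^{*n} = \ker \LL^{*k}$ for all $n > k$, completing the proof. The only genuinely delicate point is the last contradiction argument identifying $x$ with $0$; everything else is bookkeeping with increasing kernels in a finite dimensional ambient space.
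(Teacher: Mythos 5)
Your argument is correct and rests on exactly the same fact as the paper's proof, namely that $\LL^*$ restricted to the finite--dimensional invariant subspace $M(\LL)^\perp$ is quasinilpotent (hence nilpotent there); the paper simply cites this as well known, whereas you supply the standard Riesz--projection proof of it (the contradiction $x=Q^*x$ versus $x\in(\Ran Q)^\perp=\ker Q^*$), which is valid. The remaining bookkeeping with the increasing chain $\ker\LL^{*n}\subset M(\LL)^\perp$ is also correct.
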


The proof is immediate from the fact that $\LL^*|M(\LL)^\perp$ is
always quasinilpotent.

Next, let us make the following observation. Let $H^1$ be the
smallest reducing subspace of $\A$ containing the vectors $a, b$,
and put $H^2=H\ominus H^1$. Then $\LL $ decomposes as
$\LL=\LL^1\oplus \A^2$, where $\LL^1 x= \A x + \langle x, b\rangle
a$, $x\in H^1$ and $\A^2=\A| H^2$. This implies that it suffices
to prove Theorem~\ref{3to1} for the case when the smallest
reducing subspace of $\A$ containing vectors $a, b$ coincides with
$H$. From now on, we will assume that this property is fulfilled.
It follows that all spectral projections $P_n$ in
\eqref{sp-decompA} have at most rank two.

Put $M=M(\LL)$. Given any $x\in H$ and any $y\in M^\perp$, the
function
\[
f_{x,y}(\la)=\langle (I-\la \LL)^{-1}x, y\rangle
\]
is entire.

We will use an easy formula
\beqn
\label{reslv-L-A}
(\LL-z)^{-1} = (\A-z)^{-1} - (\A-z)^{-1} a \,\be^{-1}(z^{-1}) \,b^* (\A-z)^{-1},
\neqn
where $\be(z)$ is defined in \eqref{beta}.
%\marginpar{\small Tak, kak zdes' napisano, \eqref{reslv-L-A} verna i dlya возмущений ранга $n$; togda
%$\be(z)$ - matrica $n\times n$.}
To check it, one can write down the second resolvent identity
\[
(\LL-z)^{-1} - (\A-z)^{-1} = - (\A-z)^{-1} ab^* (\LL-z)^{-1}.
\]
By multiplying it by $b^*$ on the left, one gets $b^*(\LL-z)^{-1}=
\be^{-1}(z^{-1})b^*(\A-z)^{-1}$, which gives \eqref{reslv-L-A}.
Then, for any $s\in \BN$, $x\in H$ and $y\in M^\perp$
we have
\beqn
\label{id}
\begin{aligned}
z^{-s-3}  & f_{x,y} (z) \\
& = z^{-s-4} y^*(z^{-1}-\LL)^{-1} x \\
& = z^{-s-4} y^*(z^{-1}-\A)^{-1} x \\
& \qquad    +  \big[z^{-2} y^*(\A-z^{-1})^{-1}a\big] \cdot
 \, z^{-s} \be^{-1}(z) \cdot \big[z^{-2} b^*(\A-z^{-1})^{-1}x\big].
\end{aligned}
\neqn
%
%
%\beqn \label{id}
%\begin{aligned}
%z^{-s-3} f_{x,y}(z)
%& = z^{-s-4} y^*(z^{-1}-\LL)^{-1} x \\
%& = z^{-s-4} y^*(z^{-1}-\A)^{-1} x \\
%& \qquad    +  \big[z^{-2} y^*(\A-z^{-1})^{-1}a\big] \cdot
% \, z^{-s} \be^{-1}(z) \cdot \big[z^{-2} b^*(\A-z^{-1})^{-1}x\big].
%\end{aligned}
%\neqn

\begin{proposition}
\label{prop-nrl-compl}
Suppose $N\in \BZ$, $N\ge0$
and $\LL$ is a compact operator. Then the following statements are equivalent:

\begin{itemize}

\item[(i)]
$M^\perp\subset \ker \LL^{*N}$;

\item[(ii)]
For any $x\in H$ and any $y\in M^\perp$,
$f_{x,y}(\la)$ is a polynomial in $\la$ of degree less than~$N$.
\end{itemize}
\end{proposition}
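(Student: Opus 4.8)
The plan is to exploit the identity \eqref{id}, which expresses $z^{-s-3}f_{x,y}(z)$ as a sum of a term built from the normal resolvent $(z^{-1}-\A)^{-1}$ and a term involving $\be^{-1}$. First I would prove the easy implication (ii)$\Rightarrow$(i). Assume every $f_{x,y}$ with $x\in H$, $y\in M^\perp$ is a polynomial of degree $<N$. The power series of $f_{x,y}(\la)=\langle(I-\la\LL)^{-1}x,y\rangle=\sum_{k\ge0}\la^k\langle\LL^k x,y\rangle$ at $\la=0$ then has vanishing coefficients for all $k\ge N$, i.e. $\langle\LL^k x,y\rangle=0$, equivalently $\langle x,\LL^{*k}y\rangle=0$ for all $x\in H$ and all $k\ge N$. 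Hence $\LL^{*k}y=0$ for $k\ge N$, and in particular $\LL^{*N}y=0$; so $M^\perp\subset\ker\LL^{*N}$.

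For the converse (i)$\Rightarrow$(ii), suppose $M^\perp\subset\ker\LL^{*N}$. Then for $y\in M^\perp$ we have $\LL^{*N}y=0$, hence $(I-\la\LL^*)^{-1}y=\sum_{k=0}^{N-1}\la^k\LL^{*k}y$ is a polynomial of degree $<N$ in $\la$ (the geometric-type series terminates because $\LL^{*N}y=0$ forces $\LL^{*k}y=0$ for all $k\ge N$). Therefore
\[
f_{x,y}(\la)=\langle(I-\la\LL)^{-1}x,y\rangle=\langle x,(I-\bar\la\LL^*)^{-1}y\rangle=\sum_{k=0}^{N-1}\bar\la^{\,k}\,\overline{\langle\LL^{*k}y,x\rangle},
\]
wait --- to keep everything holomorphic in $\la$ rather than $\bar\la$, I would instead argue directly: $f_{x,y}(\la)=\sum_{k\ge0}\la^k\langle\LL^kx,y\rangle=\sum_{k\ge0}\la^k\langle x,\LL^{*k}y\rangle$, and since $\LL^{*k}y=0$ for $k\ge N$, only the terms with $k<N$ survive, so $f_{x,y}$ is a polynomial in $\la$ of degree less than $N$. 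This is clean and avoids the identity \eqref{id} entirely.

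In fact both directions reduce to the single observation that the Taylor coefficients of $f_{x,y}$ at $0$ are $\langle\LL^kx,y\rangle=\langle x,\LL^{*k}y\rangle$, together with the fact that for $\LL$ compact (hence $\LL^*$ compact, quasinilpotent on $M^\perp$ since $M^\perp\subset\cup_n\ker\LL^{*n}$ by Proposition~\ref{prop-n-compl}-type reasoning, though here we do not even need that) the condition ``$f_{x,y}$ is a polynomial of degree $<N$ for all $x,y$'' is literally equivalent to ``$\langle x,\LL^{*k}y\rangle=0$ for all $x$ and all $k\ge N$'', i.e. to $\LL^{*k}y=0$ for $k\ge N$, i.e. (since $\ker\LL^{*k}$ increases in $k$) to $\LL^{*N}y=0$ for all $y\in M^\perp$. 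I do not foresee a serious obstacle; the only point requiring a word of care is checking that $M^\perp\subset\ker\LL^{*N}$ indeed forces $\LL^{*k}y=0$ for every $k\ge N$ (not just $k=N$), which is immediate: $\LL^{*k}y=\LL^{*(k-N)}(\LL^{*N}y)=0$. The entirety of $f_{x,y}$ asserted just before the proposition guarantees the power series converges on all of $\BC$, so ``polynomial'' and ``terminating Taylor series'' coincide without any growth issue.
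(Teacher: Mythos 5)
Your proof is correct and is essentially the paper's argument: the paper's one-line identity $f_{x,\LL^*y}(\la)=\la^{-1}[f_{x,y}(\la)-f_{x,y}(0)]$ is precisely the statement that applying $\LL^*$ to $y$ shifts the Taylor coefficients $\langle \LL^k x,y\rangle=\langle x,\LL^{*k}y\rangle$, which is the observation your proof rests on. Both reduce (ii) to the vanishing of $\LL^{*k}y$ for $k\ge N$, hence to $M^\perp\subset\ker\LL^{*N}$.
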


\begin{proof}[Proof]
Since $f_{x,\LL^*y}(\la)=\la^{-1}[f_{x,y}(\la)-f_{x,y}(0)]$, it follows that
(ii) is equivalent to the condition
$f_{x,\LL^{*N}y}\equiv 0$ for all
$x\in H$, $y\in M^\perp$, that is, to the condition
$M^\perp\subset \ker\LL^{*N}$. This gives the statement.
\end{proof}

Suppose, in particular, that $\ker \LL^{*N}$ is finite dimensional for any
$N\in \BN$ (it is true for the operator $\LL$, given by \eqref{LL}).
Then, by the Propositions~\ref{prop-nrl-compl} and \ref{prop-n-compl},
$\LL$ is nearly complete if and only if
there is an integer $N>0$ such that
for any $x\in H$ and any $y\in M^\perp$,
$f_{x,y}(\la)$ is a polynomial in $\la$ of degree less than $N$.

\begin{proof}[Proof of Theorem~\ref{thm0angles}]
Assume that $k\in \mathbb{N}$ is the smallest positive integer
such that
\newline
$\sum_n |s_n|^{-k} |\langle P_n a, b\rangle| <\infty$
but the equality $({\rm M}_k)$ does not hold.
By the above remark, assume that $k\ge 2$.
Put $t_n=s_n^{-1}$ and define $\be(z)$ by
\eqref{beta}, where $c_n= - t_n^2 \langle P_n a, b\rangle$.
Then we have  $\sum_n |t_n|^k|c_n|<\infty $ and $\sum_n t_n^k c_n = \gamma\ne 0$.
By the obvious formula
$$
\frac{1}{z-t_n} = \sum_{j=0}^k \frac{t_n^j}{z^{j+1}} + \frac{t_n^{k+1}}{z^{k+1}(z-t_n)},
$$
we have
$$
\beta(z) =- \frac{1}{z^{k+1}}\sum_n t_n^{k}c_n  +
\frac{1}{z^{k+1}}\sum_n \frac{t_n^{k+1}c_n}{t_n-z} = \frac{\gamma}{z^{k+1}}
+\frac{1}{z^{k+1}}\sum_n \frac{t_n^{k+1}c_n}{t_n-z}.
$$
Recall that $\{t_n\}$ is contained in a finite union of rays
$\arg z=\al_\ell$, $1\le \ell\le n$. Hence, for any
$\varepsilon>0$, we have
$$
|\beta(z)| >\frac{c_\varepsilon}{|z|^{k+1}}
$$
when $|\arg z-\al_\ell|\ge \varepsilon$, $1\le \ell\le n$ and
$|z|$ is sufficiently large. Clearly, $\|(\A - z^{-1})^{-1}\|$
also admits a power estimate for such values of $z$ and we
conclude, by \eqref{id}, that $|f_{x,y}(z)|$ admits a power
estimate for $|\arg z-\al_\ell|\ge \varepsilon$. Since $\A \in
\mathfrak {S}_p$, we get that $f_{x,y}$ is a function of finite
order and, by the Phragm\'en--Lindel\"of principle, we conclude
that $f_{x,y}$ is a polynomial of degree less than some fixed $N$
for any $x\in H$, $y\in M^\perp$.
\end{proof}

\begin{remark_}
{\rm If the series in $(M_1)$ converges absolutely, then $\LL$ is what we
called in \cite{baryak} a generalized weak perturbation of $\A$.
The case when the hypotheses of Theorem~\ref{thm0angles} hold for
$k=1$ was treated in \cite[Theorem 3.3]{baryak} for rank one perturbations
of selfadjoint operators (the assumption that $\A \in \mathfrak{S}_p$ can
be dropped). In this case the completeness of a perturbation can be also
derived from the results of Macaev as in the proof of \cite[Proposition 1.1]{baryak}. }
\end{remark_}

Now we pass to the analysis of the case when the spectrum is lacunary.

Given an entire function $F$, we use notations
$M_F(r)=\max_{|z|=r} |F(z)|$,
$m_F(r)=\min_{|z|=r} |F(z)|$,
 and put
$n_F(r)$ to be the number of zeros of $F$ in the disc $|z|<r$,
counted with multiplicities. In
  %\margp{\small Thm. A slabee \cite[Theorem~3.6.1]{boas-book}. Vozmozhno, verna dlya bolee shirokogo klassa funksij.}
what follows, we will say that an
entire function $F$ \textit{is of class $\slw$} if it is of zero
order and $\log M_F(r)= \cO\big((\log r)^2\big)$ as $r\to\infty$;
the last condition can be replaced by the condition
$n_F(r)=\cO(\log r)$. We will use the following version of
\cite[Theorem~3.6.1]{boas-book}.

\begin{theoremB}
For any entire function $F$ of the class $\slw$, which
is not a polynomial, and
any $N\in\BN$, one has $\limst_{z\to \infty} |z|^{-N} |F(z)| = +\infty$.
\end{theoremB}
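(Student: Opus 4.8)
The plan is to combine a convexity argument with the classical minimum modulus estimate for functions of genus zero, which is the content of the cited form of Boas's theorem. First I would show that $\log M_F(r)/\log r\to+\infty$ as $r\to\infty$, using only that $F$ is not a polynomial. By Hadamard's three-circles theorem the function $t\mapsto\log M_F(e^t)$ is nondecreasing and convex, so $L:=\lim_{t\to\infty}t^{-1}\log M_F(e^t)$ exists in $(0,+\infty]$. If $L$ were finite we would have $\log M_F(r)\le L\log r+O(1)$, hence $M_F(r)=O(r^L)$, and then $F$ would be a polynomial by the Cauchy estimates; so $L=+\infty$. In particular $\log M_F(r)-N\log r\to+\infty$ for every $N\in\BN$.

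Next, since $F$ is of class $\slw$ it has zero order, so by the Hadamard factorization $F(z)=cz^m\prod_n(1-z/z_n)$ with $\sum_n|z_n|^{-1}<\infty$, and $F$ has infinitely many zeros because it is not a polynomial. I would now invoke the minimum modulus theorem for genus-zero functions in the form cited as \cite[Theorem~3.6.1]{boas-book}: for each $\eps>0$ there is an exceptional set $E\subset[0,+\infty)$ of finite logarithmic measure, $\int_E r^{-1}\,dr<\infty$, such that $\log m_F(r)\ge(1-\eps)\log M_F(r)$ for all sufficiently large $r\notin E$. The mechanism is that $\log|F(z)|=\log|c|+m\log|z|+\sum_n\log|1-z/z_n|$ can fall far below $\log M_F(|z|)$ only when $|z|$ is very close to some $|z_n|$; one removes suitable neighbourhoods of the numbers $|z_n|$ (their union having finite logarithmic measure thanks to $\sum|z_n|^{-1}<\infty$, the number of zeros at each scale being controlled by $n_F(r)=\cO(\log r)$), and for $r$ outside them the ``near'' zeros depress $\log|F(z)|$ by only $o(\log M_F(r))$ while the ``far'' zeros depress it by at most $\cO(\log r)$.

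Finally, put $A=[0,+\infty)\setminus E$; it is closed, and since $\int_E r^{-1}\,dr<\infty$ forces $m\big(E\cap[0,R]\big)=o(R)$ as $R\to\infty$, the set $A$ has linear density one. For large $r\in A$ and $|z|=r$,
\[
|z|^{-N}\,|F(z)|\ \ge\ r^{-N}m_F(r)\ \ge\ \exp\big(\tfrac12\log M_F(r)-N\log r\big),
\]
and, taking $\eps=\tfrac12$ above, the right-hand side tends to $+\infty$ as $r\to\infty$ by the first step; hence $\limst_{z\to\infty}|z|^{-N}|F(z)|=+\infty$.

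The main obstacle is the minimum modulus step: obtaining $\log m_F(r)\gtrsim\log M_F(r)$ outside a set of \emph{linear density zero}, i.e.\ reconciling the size of the exceptional set with the (possibly slow) growth of $\log M_F(r)$ — recall that for $F\in\slw$ one only knows $\log M_F(r)=\cO\big((\log r)^2\big)$, so this growth cannot be taken for granted. This is precisely the content of the cited Boas theorem; the non-polynomial hypothesis enters only in the first step, guaranteeing that $\log M_F(r)$ eventually outgrows $N\log r$.
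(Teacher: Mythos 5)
The paper never proves Theorem B: it is stated as ``a version of'' \cite[Theorem 3.6.1]{boas-book} and used as a black box, so there is no internal argument to compare yours against. Your first and third steps are correct and are the right reduction: convexity of $t\mapsto \log M_F(e^t)$ plus the Cauchy estimates give $\log M_F(r)/\log r\to+\infty$ for any non-polynomial entire $F$, a set of finite logarithmic measure does have linear density zero at infinity, and therefore any estimate of the form $\log m_F(r)\ge(1-\eps)\log M_F(r)$ off such a set (or even off a set of linear density zero) finishes the proof. So everything hinges on the minimum-modulus assertion in your second step.

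That is where there is a genuine gap if your sketch is read as a proof rather than as a pointer to the citation. The mechanism you describe --- delete neighbourhoods of the moduli $|z_n|$ whose union has finite logarithmic measure, and note that the near zeros then depress $\log|F(z)|$ by only $o(\log M_F(r))$ --- does not deliver that conclusion. Finite logarithmic measure of $\bigcup_n\{r:\,|r-|z_n||<\delta_n|z_n|\}$ forces $\sum_n\delta_n<\infty$, hence $\log(1/\delta_n)\gtrsim\log n\gtrsim\log\log|z_n|$ along most of the sequence; with up to $n_F(4r)=\cO(\log r)$ near zeros the resulting depression is only bounded by $\cO(\log r\,\log\log r)$. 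This is \emph{not} $o(\log M_F(r))$ uniformly over non-polynomial $F\in\slw$: a canonical product with a single zero at each point $\exp(\exp(\exp k))$ lies in $\slw$ and has $\log M_F(r)\asymp \log r\cdot\log\log\log r$. So the comparison between the near-zero depression and $\log M_F(r)$ cannot be made zero by zero with fixed radii $\delta_n$; one needs either a Boutroux--Cartan small-values lemma for the product over the zeros in each annulus $r/2\le|z_n|\le 2r$, or, more simply, the averaging bound
$\int_R^{2R}\bigl(\log M_F(r)-\log m_F(r)\bigr)\,r^{-1}\,dr\le C\log R$
(each near zero contributes $\cO(1)$ to this integral, the far and small zeros $\cO(\log R)$ in total), which by Chebyshev makes the relative measure of $\{r\in[R,2R]:\ \log M_F(r)-\log m_F(r)>\eps\log M_F(R)\}$ of order $\log R/\log M_F(R)\to 0$. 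Note that this last step, and hence the minimum-modulus estimate itself in the form you need, uses $\log M_F(r)\gg\log r$ --- so the non-polynomial hypothesis enters the second step too, contrary to your closing remark. If the exact statement of \cite[Theorem 3.6.1]{boas-book} already gives $\log m_F(r)\ge(1-\eps)\log M_F(r)$ off an exceptional set of density zero for every $F\in\slw$, then your proof is complete modulo that citation, exactly as the paper's is; but the heuristic you offer in its place is not a proof, and the difficulty it glosses over is precisely the content of the theorem.
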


\begin{lemma}
\label{lemma-entire}
Let $x\in H$ and $y\in M^\perp$. Then the entire function
$f_{x,y}$ belongs to the class $\slw$.
\end{lemma}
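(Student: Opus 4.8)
The plan is to show that $f_{x,y}$ has zero order and that its maximum modulus grows at most like $\exp(\cO((\log r)^2))$, which is exactly the defining property of the class $\slw$. Both facts should follow from the resolvent identity~\eqref{id} together with the lacunarity of the spectrum of $\A$. First I would recall that the nonzero spectrum of $\LL$ is $\{\la^{-1}:\be(\la)=0\}$, and that the zeros of $f_{x,y}$ are among the eigenvalues of $\LL$ (since $f_{x,y}(\la)=\langle(I-\la\LL)^{-1}x,y\rangle$ is entire, poles of the resolvent that survive must be zeros of $f_{x,y}$), so $n_{f_{x,y}}(r)$ is controlled by the number of points of $\si(\LL)^{-1}$ of modulus at most $r$. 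Because the eigenvalues $s_n$ of $\A$ are lacunary, the inverses $t_n=s_n^{-1}$ satisfy $n_T(r)=\cO(\log r)$ (stated in the introduction), and a standard perturbation/argument-principle count shows the zeros of $\be$ are, up to a bounded number near each $t_n$, interlaced with the $t_n$; hence $n_{f_{x,y}}(r)=\cO(\log r)$ as well.

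Having the counting bound $n_{f_{x,y}}(r)=\cO(\log r)$, I would then get the growth bound $\log M_{f_{x,y}}(r)=\cO((\log r)^2)$. One route is purely function-theoretic: an entire function whose zero-counting function is $\cO(\log r)$ automatically has $\log M(r)=\cO((\log r)^2)$ provided one also knows it does not grow too fast along some sequence of circles — and here that extra input is furnished directly by~\eqref{id}, since on circles $|z|=r$ avoiding the discs $B(t_n,\de|t_n|)$ one has $\|(\A-z^{-1})^{-1}\|=\cO(r)$ (trivially, as $\A$ is diagonal and its spectrum is $\de|t_n|$-separated after inversion), and $|\be(z)|$ is bounded below by a negative power of $|z|$ on such circles by the same elementary estimate used in the proof of Theorem~\ref{thm0angles}. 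Plugging these into~\eqref{id} gives $|f_{x,y}(z)|=\cO(|z|^{C})$ on a sequence $|z|=r_k\to\infty$, and combined with the Hadamard factorization dictated by the $\cO(\log r)$ zero count (the canonical product of genus $0$ over $\le \cO(\log r)$ zeros in $|z|\le r$ has $\log$ of modulus $\cO((\log r)^2)$), the Phragmén–Lindelöf/Borel–Carathéodory circle of ideas upgrades the $r_k$-estimate to all $r$. Thus $f_{x,y}$ is of zero order with $\log M_{f_{x,y}}(r)=\cO((\log r)^2)$, i.e.\ $f_{x,y}\in\slw$.

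The step I expect to be the main obstacle is the lower bound on $|\be(z)|$ on a family of circles of radii tending to infinity: unlike in Theorem~\ref{thm0angles}, there is now no restriction to angular sectors away from rays carrying the spectrum, so one cannot simply say "$|\be(z)|\gtrsim|z|^{-k-1}$ off the rays." Instead one must exploit lacunarity: for $|z|=r$ with $r$ comparable to some fixed $|t_{n_0}|$, the sum $\sum_n c_n/(t_n-z)$ splits into the single term $c_{n_0}/(t_{n_0}-z)$, which dominates when $z$ is not too close to $t_{n_0}$, and the geometric tails $\sum_{n\ne n_0}|c_n/(t_n-z)|$, which are small because the $|t_n|$ are geometrically spaced. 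The care needed is in choosing the radii $r_k$ (one between each consecutive pair of moduli $|t_n|$, say $r_k=\sqrt{|t_k t_{k+1}|}$) so that on $|z|=r_k$ every $t_n$ is at distance $\gtrsim r_k$ from the circle, making the elementary estimate uniform; this is where the hypothesis $t_n\to\infty$ lacunary (equivalently, $s_n\to 0$ exponentially with bounded ratios $B\si^{m-n}$, as noted in the introduction) is used essentially.
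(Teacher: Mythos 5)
There are two gaps here, and the second one is fatal. First, the zeros of $f_{x,y}$ are \emph{not} located at the (inverses of the) eigenvalues of $\LL$: the function $f_{x,y}(\la)=\langle(I-\la\LL)^{-1}x,y\rangle$ is a scalar entire function whose zero set depends on $x$ and $y$ and has no a priori relation to $\si(\LL)$; its entirety for $y\in M^\perp$ reflects cancellation of the resolvent's poles, not a localization of its zeros. So the bound $n_{f_{x,y}}(r)=\cO(\log r)$ cannot be obtained this way (in the paper this zero count is a \emph{consequence} of the growth estimate, not an input to it). Second, and more seriously, the lower bound $|\be(z)|\gtrsim |z|^{-C}$ on a sequence of circles is exactly what is unavailable in the setting of this lemma. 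The elementary estimate in the proof of Theorem~\ref{thm0angles} rests on the hypothesis that some moment sum $\sum_n t_n^k c_n$ converges absolutely to a nonzero limit; Lemma~\ref{lemma-entire} is needed precisely in the opposite regime, where (by Lemma~\ref{lemma-about-beta} and Theorem~\ref{comp-var-thm1}) $z^s\be(z)\to 0$ for every $s$ on a set of radii of linear density one, and $\be$ need not admit any power lower bound. As a sanity check: for a lacunary sequence with bounded consecutive ratios such as $t_n=2^n$, your two bounds on circles $r_k\asymp\sqrt{|t_kt_{k+1}|}$ together with the maximum principle would force $f_{x,y}$ to have polynomial growth, hence be a polynomial for all $x,y$, i.e.\ every rank one perturbation of such an $\A$ would be nearly complete --- contradicting the example from \cite{baryak} quoted after Theorem~\ref{3to1}, where $\LL^*$ fails to be nearly complete.

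The paper's proof avoids $\be$ and resolvent identities altogether and works on the adjoint side. Since $\LL^*|M^\perp$ is quasinilpotent, and since lacunarity, $\rank P_n\le 2$ and the singular-value inequalities for sums give $\mu_j(\LL^*|M^\perp)\le\mu_j(\LL^*)\le C\si^j$ with $\si<1$, the regularized determinant estimate $\|(I-\bar\la\LL^*)^{-1}|M^\perp\|\le\prod_{j}\bigl(1+|\la|\,\mu_j(\LL^*|M^\perp)\bigr)\le\exp\bigl(C(\log|\la|)^2\bigr)$ (using that $\det\bigl((I-\bar\la\LL^*)|M^\perp\bigr)\equiv1$) bounds $|f_{x,y}(\la)|$ for \emph{all} $\la$ at once. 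This is the input your outline is missing: the $\slw$ growth must come from the geometric decay of the singular values of the restriction to $M^\perp$, not from a pointwise lower bound on $\be$.
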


\begin{proof}[Proof]
It is well-known that $\LL^*|M^\perp$ is quasinilpotent
(see, for instance, \cite{markus-book}).
Given a linear operator $B$ on a Hilbert space,
we denote by $\{\mu_j(B)\}_{j\ge 1}$ the sequence
of its singular numbers.
The lacunarity of the spectrum of $\A$,
the property that $\rank P_n\le 2$ for all $n$ and
the well-known
estimates for singular numbers of the sum of two operators
\cite[Corollary~XI.9.3]{Dunf_Schwarz} imply
that
\[
\mu_j(\LL^*|M^\perp)\le \mu_j(\LL^*)\le C \si^j,
\]
where $\si<1$ is a constant. In particular,
$\LL^*|M^\perp$ is a trace class operator.
By applying the arguments employed in the proof of
\cite[Theorem~XI.9.26]{Dunf_Schwarz}, we get that
\begin{multline*}
\|(I-\bar\la \LL^*)^{-1}|M^\perp\| =
\|\det\big((I-\bar\la \LL^*)|M^\perp\big)\cdot(I-\bar\la \LL^*)^{-1}|M^\perp\| \\
\le \prod_{j=1}^\infty
      \mu_j\big((I-\bar \la \LL^*)|M^\perp \big)
\le \prod_{j=1}^\infty
      \big(1+|\la|\mu_j(\LL^*|M^\perp) \big)
\le \exp\big(C (\log|\la|)^2\big)
\end{multline*}
(notice that $\det\big((I-\bar\la \LL^*)|M^\perp\big)\equiv 1$ for all $\la$).
Since
\[
|f_{x,y}(\la)|= \big|\langle x, (I-\bar\la \LL^*)^{-1} y\rangle \big|
\le
\|x\|\cdot\|y\|\cdot\big\|(I-\bar\la \LL^*)^{-1}|M^\perp\big\|,
\]
the assertion of Lemma follows.
%Ner-va dlya sing. chisel -  \cite[Corollary~XI.9.3]{Dunf_Schwarz}.
%\margp{Dopishu pozzhe - D.V.}
\end{proof}

\begin{lemma}
\label{lemma-mer-lac}
For any normal compact operator $\A$ with lacunary spectrum and any
$\de >1$, one has
$\limst_{z\to\infty} |z|^{-\de}\, \|(\A-z^{-1})^{-1}\| = 0$.
\end{lemma}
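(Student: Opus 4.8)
The plan is to reduce the statement to the elementary fact that, $\A$ being normal, $\|(\A - w)^{-1}\| = 1/\dist(w, \sigma(\A))$ for $w \notin \sigma(\A)$, where $\sigma(\A) = \{s_n\} \cup \{0\}$. Writing $t_n = s_n^{-1}$ and substituting $w = z^{-1}$, this gives
\[
\|(\A - z^{-1})^{-1}\| = \Big(\min\Big\{\,|z|^{-1},\ \min_n \tfrac{|z - t_n|}{|z|\,|t_n|}\,\Big\}\Big)^{-1},
\]
so the whole issue is to find a closed set $A \subseteq [0,\infty)$ of linear density one on which $\dist(z^{-1}, \sigma(\A))$ stays, for large $|z|$, above a bound that dominates $|z|^{-\de}$. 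Since $\de > 1$, the contribution of the eigenvalue $0$ is harmless ($|z|^{-1}$ beats $|z|^{-\de}$); everything hinges on keeping $z$ away from the lacunary sequence $\{t_n\}$, which tends to $\infty$ and has counting function $n_T(r) = O(\log r)$.

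I would fix an exponent $\tau \in (0, \de - 1)$ — concretely $\tau = (\de-1)/2$ — set $\eta(r) = r^{-\tau}$, and define the exceptional set
\[
E = \bigcup_n \big\{\, r > 0 :\ \big|\,r - |t_n|\,\big| < \eta(r)\,|t_n|\,\big\},
\]
which is open because each defining inequality is continuous in $r$; put $A = [0,\infty) \setminus E$. For $r = |z| \in A$ and every $n$ one has $|z - t_n| \ge \big|\,|z| - |t_n|\,\big| \ge \eta(r)|t_n|$, whence $\dist(z^{-1}, \{s_n\}) \ge \eta(r)/|z| = r^{-\tau - 1}$; as $\tau + 1 > 1$, this lies below $|z|^{-1}$ for large $r$, so $\|(\A - z^{-1})^{-1}\| \le r^{\tau+1}$ and $|z|^{-\de}\|(\A - z^{-1})^{-1}\| \le r^{\tau + 1 - \de} \to 0$, since $\tau + 1 - \de = (1 - \de)/2 < 0$.

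The last point to verify is that $E$ has linear density zero. If $\big|\,r - |t_n|\,\big| < \eta(r)|t_n|$ and $r$ is large, then $r/|t_n| \in (1/2, 3/2)$, so $\eta(r)$ is comparable to $\eta(|t_n|)$, and $r$ is confined to an interval about $|t_n|$ of length at most a constant times $\eta(|t_n|)|t_n| = |t_n|^{1-\tau}$; moreover only those $n$ with $|t_n| < 2R$ can contribute a radius $r \le R$. Hence, up to a bounded term, $m(E \cap [0,R]) \le C \sum_{|t_n| < 2R} |t_n|^{1-\tau}$, which is $O(n_T(2R)) = O(\log R)$ when $\tau \ge 1$ and $O(R^{1-\tau} n_T(2R)) = O(R^{1-\tau}\log R)$ when $\tau < 1$; in either case it is $o(R)$. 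Thus $A$ is closed of linear density one, and the lemma follows.

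I do not expect a genuine obstacle here: the only care needed is (i) invoking the normality of $\A$ at the right moment, which is exactly what converts the resolvent norm into a reciprocal distance, and (ii) choosing the gauge exponent $\tau$ in $(0, \de-1)$ so that the total length $\sum |t_n|^{1-\tau}$ of the exceptional annuli inside $[0,R]$, summed against $n_T = O(\log)$, remains $o(R)$ for every $\de > 1$. The lacunarity enters only through $n_T(r) = O(\log r)$, and in fact any subpolynomial growth of $n_T$ would suffice for this particular lemma.
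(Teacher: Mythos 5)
Your proof is correct and follows essentially the same route as the paper: by normality the resolvent norm is the reciprocal of $\dist(z^{-1},\sigma(\A))$, one removes annuli of relative width a small negative power of $|z|$ around the points $|t_n|$ (the paper equivalently removes discs $B(s_n,|s_n|^{\de_1})$ with $1<\de_1<\de$, i.e.\ $\tau=\de_1-1$), and the lacunarity, via $n_T(r)=O(\log r)$, makes the exceptional set of radii have linear density zero. No gaps.
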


\begin{proof}
Let $1<\de_1<\de$, and consider the discs $B_n:=B(s_n,
|s_n|^{\de_1})$.
Assume that $|z|\ge 1$. If $z^{-1}\notin\cup_n B_n$, then
$|z^{-1}-s_n|\ge \eps |z|^{-\de_1}$ for all $n$, where $\eps>0$ is
some constant (consider the cases $|z^{-1}| \ge 2|s_n|$ and
$|z^{-1}| < 2 |s_n|$). Therefore $|z|^{-\de_1}
\|(z^{-1}-\A)^{-1}\| \le \eps^{-1}$ for all $z$
such that $z^{-1}  \notin \BC\sm \cup_n B_n$.
One gets from the lacunarity of the
spectrum that the set $ \{|z|^{-1}: \; z\in \cup_n B_n\} $ has
linear density zero, which implies the statement.
\end{proof}

\begin{lemma}
\label{lemma-about-beta}
If $\LL$ is not nearly complete, then
for any $s\in \BN$,
\[
\limst_{z\to \infty} z^s \beta(z)=0,
\]
where $\be(z)$ is defined in \eqref{beta}.
\end{lemma}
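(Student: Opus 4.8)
The plan is to combine the resolvent formula \eqref{reslv-L-A}--\eqref{id} with the growth estimates already established in Lemmas~\ref{lemma-entire}--\ref{lemma-mer-lac} and with Theorem~B. Suppose, for contradiction, that $\LL$ is not nearly complete but that $\limst_{z\to\infty} z^s\be(z)=0$ fails for some $s\in\BN$. The aim is to show this forces near completeness by producing a polynomial bound on all the entire functions $f_{x,y}$.

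First I would note that since $\LL$ is \emph{not} nearly complete, by Proposition~\ref{prop-nrl-compl} and Proposition~\ref{prop-n-compl} there exist $x\in H$ and $y\in M^\perp$ such that $f_{x,y}$ is not a polynomial. By Lemma~\ref{lemma-entire}, $f_{x,y}$ is in the class $\slw$, so Theorem~B applies: for \emph{every} $N\in\BN$ we have $\limst_{z\to\infty}|z|^{-N}|f_{x,y}(z)|=+\infty$, i.e.\ $|f_{x,y}(z)|$ grows faster than any power of $|z|$ along a set of radii of linear density one. Next I would turn this into information about $\be$ via \eqref{id}. Rearranging \eqref{id}, the term $z^{-s}\be^{-1}(z)$ appears multiplied by the two bracketed factors $z^{-2}y^*(\A-z^{-1})^{-1}a$ and $z^{-2}b^*(\A-z^{-1})^{-1}x$; by Lemma~\ref{lemma-mer-lac} each of these bracketed factors is, along a set of radii of linear density one, bounded by $|z|^{-2+\de}$ for any $\de>1$, hence by $|z|^{-1/2}$ say. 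Intersecting the three density-one sets (from Theorem~B applied to $f_{x,y}$ and from Lemma~\ref{lemma-mer-lac} applied to the two factors, plus the trivial estimate $|z^{-s-4}y^*(z^{-1}-\A)^{-1}x|$ which is also small on a density-one set), we get a closed set $A\subset[0,\infty)$ of linear density one along which the left side $z^{-s-3}f_{x,y}(z)$ of \eqref{id} dominates every power of $|z|$, while the ``normal'' piece $z^{-s-4}y^*(z^{-1}-\A)^{-1}x$ is negligible. Therefore along $A$ the product $[z^{-2}y^*(\A-z^{-1})^{-1}a]\cdot z^{-s}\be^{-1}(z)\cdot[z^{-2}b^*(\A-z^{-1})^{-1}x]$ must dominate every power of $|z|$, which — given the two bracketed factors are bounded by $|z|^{-1/2}$ there — forces $|z^{-s}\be^{-1}(z)|$, hence $|z^{-s}/\be(z)|$, to dominate every power of $|z|$ along $A$; equivalently $\limst_{z\to\infty} z^{-s}\be(z)^{-1}$ blows up, i.e.\ $z^s\be(z)\to 0$ along $A$. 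But this set has linear density one, so by definition $\limst_{z\to\infty} z^s\be(z)=0$ — contradiction. Hence no such $s$ can fail, and the Lemma follows.

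The main obstacle I anticipate is the bookkeeping in the contradiction step: one has to be careful that $y$ in \eqref{id} ranges over $M^\perp$ (required for $f_{x,y}$ to be entire and for Lemma~\ref{lemma-entire}) while $x$ ranges over all of $H$, and to check that the density-one set $A$ can be chosen independently of which power $N$ we are comparing against — this is fine because Theorem~B and Lemma~\ref{lemma-mer-lac} give, for each fixed target, a density-one set, and the conclusion ``$z^s\be(z)\to0$ along some density-one set'' only requires running the comparison for $N$ large enough relative to $s$, after which a single set $A$ works. A second minor point is the degenerate possibility that $\be\equiv 1$ (all $c_n=0$), but then $\A=\LL$ is normal hence complete or nearly complete, so that case is excluded from the start; more generally one should observe that $\be$ is not identically zero, so $z^s\be^{-1}(z)$ makes sense as a meromorphic function and the manipulation of \eqref{id} is legitimate away from the poles, which form a sparse set already absorbed into the exceptional set.
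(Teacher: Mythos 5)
Your argument is correct and is essentially the paper's own proof: the identity \eqref{id}, Lemma~\ref{lemma-entire} together with Theorem~B to force $\limst_{z\to\infty}|z|^{-s-3}|f_{x,y}(z)|=+\infty$ for a suitable pair $x\in H$, $y\in M^\perp$, Lemma~\ref{lemma-mer-lac} to make the bracketed factors and the ``normal'' term negligible, and the observation that a finite intersection of density-one subsets of $[0,+\infty)$ has density one; your contradiction framing is just the direct derivation of $\limst_{z\to\infty} z^s\be(z)=0$ restated. (One inessential slip in your closing aside: $c_n\equiv 0$ does not give $\A=\LL$, since $\langle P_n a,b\rangle$ can vanish for all $n$ with $a,b\ne 0$ when $\rank P_n=2$; but that case causes no trouble, as your own estimate then shows $\LL$ is nearly complete, so the hypothesis of the lemma excludes it.)
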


\begin{proof}[Proof]
Suppose $\LL$ is not nearly complete, so that
$M^\perp$ is infinite dimensional.
Fix some $s\in \BN$. By
Proposition~\ref{prop-nrl-compl},
Lemma~\ref{lemma-entire} and Theorem~B,
there exist $x\in H$ and $y\in M^\perp$ such that
$\limst_{z\to \infty} |z|^{-s-3} |f_{x,y}(z)| = +\infty$.
By Lemma~\ref{lemma-mer-lac},
$$
\limst_{z\to \infty} z^{-2} u^*(\A-z^{-1})^{-1}v = 0
$$
for any pair of vectors $u,v\in H$.
Since the limit $\limst$ of the modulus of the left hand part in \eqref{id}
equals $+\infty$ and the finite intersection of any subsets of
$[0, +\infty)$ of linear density one
has linear density one, the assertion of the lemma follows.
\end{proof}
\medskip

\section{Proof of Theorems \ref{comp-var-thm1} and \ref{3to1}}
\label{mainpr}

First we show how to deduce Theorem \ref{3to1} from Theorem \ref{comp-var-thm1}.

\begin{proof}[Proof of Theorem \ref{3to1} assuming Theorem \ref{comp-var-thm1}]
Let $\LL$ have the form given in the Theorem.
Put $t_n=s_n^{-1}$
 and define $\be(z)$ by
\eqref{beta}, where $c_n= - t_n^2 \langle P_n a, b\rangle$.
Assume $\LL$ is not nearly complete; then by Lemma~\ref{lemma-about-beta},
equality~\eqref{limst2}
holds for any positive integer $s$. Therefore
Theorem~\ref{comp-var-thm1} gives us the conclusions
of Theorem~\ref{3to1}.
\end{proof}

The rest of this section is devoted to the proof of
Theorem \ref{comp-var-thm1}.

\begin{lemma}
\label{div-diff} Let $r$ be a positive integer and let $f\in C^r[a,b]$ be a real function. If
$|f^{(r)}|>\eps>0$ on $[a,b]$, then there exists
a subinterval $[c,d]$ of $[a,b]$ of length $ \frac {b-a}{3^r}$
such that $|f(x)| \ge \big(\frac {b-a} 6\big)^r\,\eps$ for all $x\in [c,d]$.
\end{lemma}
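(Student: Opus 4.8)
The plan is to reduce the statement to a standard fact about Taylor polynomials: if the $r$-th derivative of $f$ is bounded below in absolute value on $[a,b]$, then $f$ itself cannot be too small on a large portion of the interval, because making $f$ small forces the lower-order Taylor data to nearly cancel the contribution of the high derivative, which is impossible over a subinterval of definite length. Concretely, I would first observe that by rescaling we may assume $[a,b]=[0,1]$ at the cost of the constants; but since the constants $(b-a)/3^r$ and $((b-a)/6)^r$ are exactly what the lemma claims, it is cleaner to argue directly.

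First I would set $h = \frac{b-a}{3}$ and consider the three consecutive subintervals $I_0=[a,a+h]$, $I_1=[a+h,a+2h]$, $I_2=[a+2h,b]$, each of length $h$. The idea is that on at least one of these, $|f^{(r-1)}|$ is bounded below: since $|f^{(r)}|>\eps$ on $[a,b]$ and $f^{(r)}$ is continuous, $f^{(r)}$ has constant sign, say positive (otherwise replace $f$ by $-f$), so $f^{(r-1)}$ is strictly increasing with $(f^{(r-1)})' = f^{(r)} > \eps$; hence $f^{(r-1)}$ increases by more than $\eps h$ across each of $I_0,I_1$, so it is $> \eps h/? $ in absolute value on at least two of the three subintervals. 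More carefully: $f^{(r-1)}$ crosses zero at most once on $[a,b]$, so on at least one of $I_0$ or $I_2$ it stays at distance $\ge \eps h$ from zero throughout (if the zero, if any, lies in $I_0\cup I_1$, use $I_2$; if it lies in $I_1\cup I_2$, use $I_0$). On that subinterval $|f^{(r-1)}| \ge \eps h = \eps\frac{b-a}{3}$.

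Then I would iterate this argument downward in the order of differentiation. Having found a subinterval $[a_1,b_1]\subset[a,b]$ of length $h = \frac{b-a}{3}$ with $|f^{(r-1)}| > \eps\frac{b-a}{3}$ on it, apply the same trisection to $[a_1,b_1]$ with the function $f^{(r-2)}$, whose derivative is $f^{(r-1)}$: one gets a subinterval of length $\frac{b-a}{3^2}$ on which $|f^{(r-2)}| \ge \big(\eps\frac{b-a}{3}\big)\cdot\frac{b-a}{3\cdot 3}$. Continuing for $r$ steps total, after the $j$-th step I have a subinterval of length $\frac{b-a}{3^j}$ on which the $(r-j)$-th derivative is bounded below by $\eps\cdot\frac{b-a}{3}\cdot\frac{b-a}{9}\cdot\frac{b-a}{27}\cdots$, i.e. by $\eps\prod_{i=1}^{j}\frac{b-a}{3^i}$ — one just has to check the bookkeeping of the constants so that after $r$ steps the lower bound for $|f|=|f^{(0)}|$ on a subinterval of length $\frac{b-a}{3^r}$ comes out at least $\big(\frac{b-a}{6}\big)^r\eps$. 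The product $\prod_{i=1}^{r}\frac{b-a}{3^i} = (b-a)^r 3^{-r(r+1)/2}$ is far smaller than $\big(\frac{b-a}{6}\big)^r$, so there is plenty of slack; the right way to keep the constant clean is, at the step passing from a bound $|g'|\ge \eta$ on an interval of length $\ell$ to a bound on $|g|$, to note $g$ changes by $\ge \eta\ell/3$ across a third of the interval and stays away from any single zero, giving $|g|\ge \eta\ell/6$ on a subinterval of length $\ell/3$ (the extra factor compared to $\eta\ell/3$ being a safe margin absorbing the location of the zero). Tracking $\ell \mapsto \ell/3$ and $\eta \mapsto \eta\ell/6$ starting from $\eta_0=\eps$, $\ell_0=b-a$ yields exactly $|f|\ge \eps\prod_{i=0}^{r-1}\frac{(b-a)/3^i}{6} = \eps\big(\frac{b-a}{6}\big)^r 3^{-r(r-1)/2} \le \big(\frac{b-a}{6}\big)^r\eps$ — here I should be careful with the direction of the inequality and perhaps simply claim the weaker clean bound $\big(\frac{b-a}{6}\big)^r\eps$, which follows a fortiori.

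The main obstacle, and the only genuinely delicate point, is the ``stays away from the zero'' step: a monotone function on an interval of length $\ell$ with derivative $\ge\eta$ need not have absolute value $\ge\eta\ell/2$ everywhere (it can vanish in the middle), so one must trisect and select the correct outer third, and one must verify that a function with a single sign change of its derivative — which is what $f^{(r-1-j)}$ is at stage $j$, being an antiderivative of a function already controlled in sign — really does have at most one zero on the relevant interval. This is where I would be most careful: the bound $|f^{(r)}|>\eps$ guarantees $f^{(r)}$ has constant sign on all of $[a,b]$, hence $f^{(r-1)}$ is strictly monotone on $[a,b]$ and has at most one zero; but $f^{(r-2)}$ need only be monotone on each side of that zero, so a priori it could have more zeros. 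The resolution is to shrink to the subinterval produced at the previous step, on which the relevant derivative has a definite sign (being bounded away from $0$), so monotonicity and the ``at most one zero'' property propagate correctly down the chain. Once that is set up, the rest is the routine constant-chasing sketched above.
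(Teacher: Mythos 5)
Your approach is the same as the paper's: trisect the interval, use the strict monotonicity of the relevant antiderivative to pick an outer third on which the next derivative down is bounded away from zero, and iterate this $r$ times (the paper writes out the case $r=1$ by comparing $f$ with its value at the midpoint of $[a,b]$ and disposes of general $r$ "by an obvious induction"). Your identification of the one delicate point — that at each stage the function in play has at most one zero only because you have already localized to a subinterval where its derivative has constant sign — is exactly right and is resolved correctly.

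There are, however, two slips in the execution. First, the case analysis "if the zero lies in $I_0\cup I_1$, use $I_2$; then $|f^{(r-1)}|\ge\varepsilon h$ there" fails when the zero sits in the right half of $I_1$ (e.g.\ at the common endpoint of $I_1$ and $I_2$, where $f^{(r-1)}$ vanishes at an endpoint of the chosen third). The correct observation is that the zero cannot be within $h/2$ of both outer thirds, since these are separated by $I_1$ of length $h$; this yields the bound $\varepsilon h/2=\varepsilon\frac{b-a}{6}$, i.e.\ the factor $\ell/6$ rather than $\ell/3$. You do use $\ell/6$ in the subsequent bookkeeping, so this self-corrects, but the write-up should say so. Second, and more seriously, your final "a fortiori" goes in the wrong direction. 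The iteration $\ell\mapsto\ell/3$, $\eta\mapsto\eta\ell/6$ from $(\ell_0,\eta_0)=(b-a,\varepsilon)$ gives the lower bound $\big(\frac{b-a}{6}\big)^r3^{-r(r-1)/2}\varepsilon$, which is \emph{smaller} than the asserted $\big(\frac{b-a}{6}\big)^r\varepsilon$; the asserted bound is therefore the \emph{stronger} statement and does not follow from what you proved (likewise, the smallness of $\prod_i(b-a)3^{-i}$ is not "slack" in your favour). What your argument actually establishes is the lemma with the constant degraded by the factor $3^{-r(r-1)/2}$.

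In mitigation: the paper's own one-line induction produces exactly the same degraded constant, and the degradation is immaterial — in the sole application (the proof of Lemma 3.6, with $r=2$ and $b-a=(\varepsilon-\varepsilon_1)|t_{m_k}|$) any bound of the form $c(r)(b-a)^r\varepsilon$ on a subinterval of length $\frac{b-a}{3^r}$ suffices. So the honest fix is to state and prove the weaker constant rather than to claim the stated one follows a fortiori.
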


\begin{proof}[Proof]
Consider first the case when $r=1$. Then we can assume without loss of generality that $f'>\eps$ on $[a,b]$.
Let $[a,b]=I_1\cup I_2 \cup I_3$ be the subdivision of
$[a,b]$ into three subsequent equal intervals. Then one can take
$[c,d]=I_1$ if $f(\frac {a+b}2)<0$ and $[c,d]=I_3$ in the opposite case.

The case of general $r$ now follows by an obvious induction argument.
\end{proof}

We will use the following result by G. P\'olya (1923).

\begin{lemmaC}[see P\'olya \cite{Polya23}, p. 170]
\label{3seqs}
Let $\{p(n)\}$, $\{\alpha(n)\}$ and $\{q(n)\}$ $(n\in \BN$) be sequences
such that $p(n)\ge0$, $q(n)\ge0$, $\alpha(n) > 0$, and
$q(n)=\alpha(n)p(n)$ for all $n$. Suppose that
$\limsup p(n)=+\infty$,
$\lim q(n)=0$, and that the sequence $\{\alpha(n)\}$ decreases and tends to $0$.
Then there exists an increasing index sequence $\{m_k\}$ such that
\begin{enumerate}
\item $p(m_k)=\max \big\{ p(s): \; 1\le s\le m_k \big\}$ for all $k$;

\item $q(m_k)=\max \big\{ q(s): \; s\ge m_k \big\}$ for all $k$;

\item $\lim_k p(m_k)=+\infty$.
\end{enumerate}
\end{lemmaC}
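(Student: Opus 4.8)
The plan is to recast both peak conditions in terms of two monotone ``envelope'' functions. I would set
$P(n)=\max\{p(s):1\le s\le n\}$ and $Q(n)=\sup\{q(s):s\ge n\}$. Since $\limsup p(n)=+\infty$, the sequence $P$ is non-decreasing and $P(n)\to+\infty$; since $q(n)\to 0$ with $q\ge 0$, the sequence $Q$ is non-increasing with $Q(n)\to 0$, and the supremum defining $Q(n)$ is actually \emph{attained} (when $Q(n)>0$ only finitely many $s\ge n$ satisfy $q(s)>Q(n)/2$; the case $Q(n)=0$ is trivial). With this notation, condition (1) of the statement says precisely $p(m_k)=P(m_k)$ (call such an index a \emph{left record} of $p$), and condition (2) says precisely $q(m_k)=Q(m_k)$ (a \emph{tail maximizer} of $q$). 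So it suffices to show that the set $T=\{m:\ p(m)=P(m)\ \text{and}\ q(m)=Q(m)\}$ is infinite and that $p$ is unbounded on $T$; enumerating $T$ increasingly as $m_1<m_2<\cdots$ then yields (1), (2), and — because $m_k\to\infty$ while $P(n)\to\infty$ — also $p(m_k)=P(m_k)\to+\infty$, which is (3).

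The heart of the proof will be the claim: for every $c>0$ there is $m\in T$ with $p(m)\ge c$. To prove it, I would first use $P(n)\to\infty$ to pick a left record $m_0$ with $p(m_0)=P(m_0)\ge c$. Then let $m\ge m_0$ be an index at which $Q(m_0)$ is attained, so $q(m)=Q(m_0)$; since $m\ge m_0$ one gets $Q(m)\le Q(m_0)=q(m)\le Q(m)$, hence $q(m)=Q(m)$, i.e. $m$ is a tail maximizer of $q$. Two things then remain. For the size bound: $q(m)=Q(m_0)\ge q(m_0)$, and writing $q=\alpha p$ and using $\alpha(m)\le\alpha(m_0)$ (as $m\ge m_0$ and $\alpha$ decreases) gives $\alpha(m)p(m)\ge\alpha(m_0)p(m_0)\ge\alpha(m)p(m_0)$, hence $p(m)\ge p(m_0)\ge c$. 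For the left-record property $p(m)=P(m)$: suppose not, so there is $u\le m$ with $p(u)>p(m)$; since $p(m)\ge p(m_0)=\max_{t\le m_0}p(t)$, necessarily $u>m_0$, but then $\alpha(u)\ge\alpha(m)$ (as $u\le m$) yields $q(u)=\alpha(u)p(u)>\alpha(m)p(m)=q(m)=Q(m_0)$, contradicting $q(u)\le\sup_{s\ge m_0}q(s)=Q(m_0)$ (valid since $u\ge m_0$). Thus $m\in T$ and $p(m)\ge c$, proving the claim.

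Given the claim, $p$ is unbounded on $T$, hence $T$ is infinite, and the reformulation in the first paragraph finishes the argument. The one genuinely delicate point — the step I would flag as the main obstacle — is the verification that a tail maximizer of $q$ is automatically a left record of $p$: this is exactly where the monotonicity of $\alpha$ is used, to turn an a priori unrelated inequality $p(u)>p(m)$ into the forbidden inequality $q(u)>Q(m_0)$. Everything else is routine bookkeeping with the monotone envelopes $P$ and $Q$ together with the elementary observation that $Q(n)$ is attained because $q(n)\to 0$.
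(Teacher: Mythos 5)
Your argument is correct: the reduction to the monotone envelopes $P$ and $Q$, the verification that $Q(m_0)$ is attained (indeed $Q(m_0)>0$ here, since $Q(m_0)=0$ would force $p\equiv 0$ on the tail, contradicting $\limsup p=+\infty$), and the key step showing that a tail maximizer of $q$ lying past a left record of $p$ is itself a left record all check out, and together they do yield (1)--(3). Note that the paper offers no proof of this lemma --- it is quoted directly from P\'olya's 1923 article --- so there is nothing to compare against; your write-up is a clean, self-contained derivation in the spirit of the standard ``P\'olya peaks'' construction.
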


The main step in the proof of
Theorem \ref{comp-var-thm1} will be the following statement.

\begin{lemma}
\label{comp-var-lem-old}
Suppose that the sequences $\{t_n\}$,
$\{c_n\}$ and a complex number $\varkappa$ meet all the conditions of
the above Theorem~\ref{comp-var-thm1}, but instead
of~\eqref{limst2}, we only require that \beqn \label{limst3}
\limst_{|z|\to +\infty}  z \be(z) =0. \neqn Then $\sum_n
|t_n^{-1}\,c_n| < + \infty$ and $\sum_n t_n^{-1}\,c_n = \varkappa$.
\end{lemma}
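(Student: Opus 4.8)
The plan is to reduce the whole statement to the single assertion $\sum_n|c_n/t_n|<\infty$. Once this is available, set $d_n=c_n/t_n$ and write
\[
\be(z)=\varkappa+\sum_n d_n\Big(\frac{t_n}{t_n-z}-1\Big)=\varkappa-\sum_n d_n+\sum_n\frac{d_n t_n}{t_n-z},
\]
a rearrangement which is now legitimate since $\sum_n|d_n|<\infty$. A standard splitting of the last series (threshold $|t_n|\lessgtr\sqrt{|z|}$, with lacunarity bounding the number of intermediate poles and the tail of $\sum_n|d_n|$ handling the rest) shows that $\sum_n d_n t_n/(t_n-z)\to0$ as $z\to\infty$ along a set of $|z|$ of density one; since \eqref{limst3} forces $\be(z)\to0$ along such a set, we obtain $\sum_n d_n=\varkappa$, i.e.\ the second conclusion. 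So everything comes down to proving $\sum_n|c_n/t_n|<\infty$.

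Before that I would record two consequences of \eqref{limst3} obtained by residue calculus on the circles $|z|=r$ ($r\neq|t_n|$ for all $n$): since $\be$ is holomorphic at $0$ with $\be(0)=\varkappa$ and has a simple pole at each $t_n$ with residue $-c_n$,
\[
\frac1{2\pi i}\oint_{|z|=r}\be(z)\,\frac{dz}{z}=\varkappa-\sum_{|t_n|<r}\frac{c_n}{t_n},\qquad
\frac1{2\pi i}\oint_{|z|=r}\be(z)\,dz=-\sum_{|t_n|<r}c_n,
\]
and, because $\max_{|z|=r}|\be(z)|=r^{-1}\max_{|z|=r}|z\be(z)|$, both left-hand sides tend to $0$ as $r\to\infty$ along the density-one set of radii from \eqref{limst3}. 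Thus along that set the truncated sums $\sum_{|t_n|<r}c_n/t_n$ and $\sum_{|t_n|<r}c_n$ converge (to $\varkappa$ and to $0$ respectively); these cancellation estimates are what will later keep the ``low-frequency'' part of $\be$ under control. I would also use the elementary remark that lacunarity gives $\sum_n1/|t_n|<\infty$, so that the assumption $\sum_n|c_n/t_n|=\infty$ forces $\limsup_n|c_n|=\infty$.

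Now suppose $\sum_n|c_n/t_n|=\infty$, towards a contradiction, and apply P\'olya's Lemma~C. Ordering the indices so that $|t_n|$ is non-decreasing, take $p(n)=|c_n|$, $\alpha(n)=|t_n|^{-2}$, $q(n)=|c_n/t_n^2|$: then $\alpha$ is non-increasing and $\to0$, $\sum_nq(n)<\infty$ so $q(n)\to0$, and $\limsup_np(n)=\infty$ by the previous remark. Lemma~C produces peak indices $m_k\to\infty$ with (i) $|c_{m_k}|=\max_{s\le m_k}|c_s|$, (ii) $|c_{m_k}/t_{m_k}^2|=\max_{s\ge m_k}|c_s/t_s^2|$, and (iii) $|c_{m_k}|\to\infty$.

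The heart of the argument, and the step I expect to be the main obstacle, is then to show that near each pole $t_{m_k}$ the function $z\be(z)$ stays $\ge\eps_0$ (for a fixed $\eps_0>0$) on a set of radii whose relative length in the corresponding ``octave'' is bounded below by an absolute constant. On a circle $|z|=r$ with $r$ within a fixed small multiple of $|t_{m_k}|$, I would split $\be(z)=\varkappa+\sum_n c_n\big(\frac1{t_n-z}-\frac1{t_n}\big)$ into: the single term $n=m_k$, of modulus $\asymp|c_{m_k}|/\dist(z,t_{m_k})$, which is large because $|c_{m_k}|\to\infty$; the $O(1)$ terms with $|t_n|$ comparable to $|t_{m_k}|$ (finitely many by lacunarity), bounded using $|c_n|\lesssim|c_{m_k}|$ — a consequence of (i)--(ii) — together with $\dist(z,t_n)\gtrsim|t_{m_k}|$; the terms with $|t_n|\ll|t_{m_k}|$, which after expanding $\frac1{t_n-z}-\frac1{t_n}$ in powers of $t_n/z$ contribute $-\sum_{|t_n|<|z|}c_n/t_n$ plus lower-order sums controlled by the truncated-sum estimates of the previous paragraph (these essentially cancel the leading $\varkappa$), and whose crude remainder is handled by the record property (i) and $\sum_n1/|t_n|<\infty$; and the terms with $|t_n|\gg|t_{m_k}|$, bounded by $|z|\sum_{|t_n|>2|t_{m_k}|}|c_n/t_n^2|$, i.e.\ $|t_{m_k}|$ times a tail of the convergent series $\sum_n|c_n/t_n^2|$, which one controls against the singular term with the help of (ii). The delicate point is to make every one of these ``far'' contributions uniformly smaller than the singular term throughout a disc about $t_{m_k}$ of radius a \emph{fixed proportion} of $|t_{m_k}|$ — here one must organize the bookkeeping by octaves, since a lacunary sequence may contain short plateaus of comparable moduli, and one must feed in the cancellation information at the appropriate radii. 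Granting this, the set $\{r:\max_{|z|=r}|z\be(z)|\ge\eps_0\}$ contains, near each of the infinitely many $|t_{m_k}|$, an interval of relative length $\ge c>0$ in its octave; since the largest such interval inside $[0,R]$ is then a fixed fraction of $[0,R]$, this set has positive upper density, contradicting \eqref{limst3}. Hence $\sum_n|c_n/t_n|<\infty$, and by the reduction in the first paragraph $\sum_n c_n/t_n=\varkappa$.
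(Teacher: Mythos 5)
Your overall skeleton --- P\'olya peaks $m_k$, a lower bound for $|z\be(z)|$ on a fixed proportion of the octave around $|t_{m_k}|$, contradiction with the density-one set in \eqref{limst3}, and finally a dominated-convergence type argument to identify the sum once absolute convergence is known --- is exactly the architecture of the paper's proof, and your first and last paragraphs are sound. But at the central step you try to dominate the ``far'' part of $\be$ itself by the singular term $c_{m_k}/(t_{m_k}-z)$, and there is a genuine gap in the high-frequency estimate. The peak property (ii) gives only the termwise bound $|c_n|/t_n^2\le|c_{m_k}|/t_{m_k}^2$ for $n\ge m_k$, with no decay in $n$; consequently the tail $\tau_k=\sum_{|t_n|>2|t_{m_k}|}|c_n|/t_n^2$, although it tends to $0$, need not be $O(|c_{m_k}|/t_{m_k}^2)$. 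Indeed, $K_k$ consecutive indices after $m_k$ may saturate (ii); convergence of $\sum_n|c_n|/t_n^2$ only forces $K_k|c_{m_k}|/t_{m_k}^2\to 0$, which still allows $K_k\to\infty$, and then $|z|\tau_k\gtrsim K_k|c_{m_k}|/|t_{m_k}|$ swamps the singular term $|c_{m_k}|/(\eps|t_{m_k}|)$ on most of the disc of radius $\eps|t_{m_k}|$. To rescue this route you would need a further cancellation identity for the \emph{tail}, e.g.\ $\frac{1}{2\pi i}\oint_{|z|=r}\be(z)z^{-2}\,dz=\sum_{|t_n|\ge r}c_nt_n^{-2}=o(r^{-2})$ along good radii, which is not among the two identities you recorded. (A milder repair is also needed on the low-frequency side: the remainder after extracting $\sum_{|t_n|<r}c_n/t_n$ and $\sum_{|t_n|<r}c_n$ is controlled by $\sum_{|t_n|<R}|t_n|^j\lesssim R^j$ together with (i), not by $\sum_n 1/|t_n|<\infty$, which loses a factor of $|t_{m_k}|$.)

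The paper sidesteps all of this by estimating $\be''(z)/2=\sum_n c_n(z-t_n)^{-3}$ instead of $\be$: differentiation kills $\varkappa$ and the $-c_n/t_n$ terms, so no cancellation or contour bookkeeping is needed, and the far terms now carry $|z-t_n|^{-3}\lesssim\max(|t_n|,|t_{m_k}|)^{-3}$, so that $|c_n|/t_n^2\le|c_{m_k}|/t_{m_k}^2$ combined with the extra factor $|t_n|^{-1}\le Bg^{m_k-n}|t_{m_k}|^{-1}$ yields a geometrically summable series dominated by $|c_{m_k}|/|t_{m_k}|^3$. On the low-frequency side the paper inserts the weight $u^n$ into the P\'olya data, $p(n)=u^n|c_n|/|t_n|$, precisely to obtain the geometric decay $|c_n|\le B(u/g)^{m_k-n}|c_{m_k}|$, which your unweighted choice $p(n)=|c_n|$ does not provide. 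The price is that a lower bound for $|\be''|$ must be converted into a lower bound for $|\be|$ on an interval of comparable relative length; this is exactly the role of the elementary Lemma~\ref{div-diff}, applied to $\Re\big(\zeta\be\big)$ along the ray through $t_{m_k}$. I would recommend either adopting this second-derivative device or supplying the missing tail cancellation explicitly; as written, the high-frequency step fails.
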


\begin{proof}
Since $\{t_n\}$ is lacunary, it follows that for some constant $\gamma>0$,
$|t_m-t_n|\ge \gamma\max(|t_m|,|t_n|)$ for all $m\ne n$.
Also, there are some constants $g, B>1$ such that $|t_n/t_m|\le B g^{n-m}$ for all $n<m$.
We assume that the sequence $\{|t_n|\}$ increases.

First let us prove that $\sum_n |t_n^{-1}\,c_n|<\infty$.
Assume it is not so.
Choose some $u\in (1,g)$ close to $1$.
Put
$p(n)=u^n |c_n| |t_n|^{-1}$,
$\alpha(n)=u^{-n}|t_n|^{-1}$,
$q(n)= |t_n|^{-2} |c_n|$.
Since the series $\sum_n |t_n^{-1} \, c_n|$
diverges and the series $\sum_n |t_n^{-2} \, c_n|$
converges, it follows that all the hypotheses of P\'olya's lemma
are satisfied. Let $\{m_k\}$ be an index sequence that has properties (1)--(3).

Properties (1) and (2) imply that
\beqn
\label{est-c-n}
\begin{alignedat}{2}
|c_n|& \le u^{m_k-n} \frac {|t_n|}{ |t_{m_k}| }\, |c_{m_k}| \le B (g^{-1} u)^{m_k-n}\, |c_{m_k}| & \quad & \text{for}\; n<m_k; \\
\frac{|c_n|}{|t_n|^2} & \le \frac{|c_{m_k}|}{|t_{m_k}|^2}\,  &\quad & \text{for}\; n>m_k.
\end{alignedat}
\neqn
By \eqref{beta2},
$$
\frac {\beta''(z)} {2}\,
=
\sum_{n=1}^\infty \frac {c_n}{(z - t_n)^{3}}.
$$
Let $|z-t_{m_k}|\le \eps |t_{m_k}|$, where
$\eps$ is a small positive constant, which will be chosen later.
If $n<m_k$, then
\[
|z-t_n|
\ge
|t_n-t_{m_k}| - |z-t_{m_k}|
\ge
(\gamma-\eps)|t_{m_k}|.
\]
Similarly, if $n>m_k$, then
$
|z-t_n|
\ge
|t_n-t_{m_k}| - |z-t_{m_k}|
\ge
(\gamma-\eps)|t_n|.
$
Hence inequalities \eqref{est-c-n} imply the estimates
\beqn
\label{est-der}
\begin{aligned}
& \Big|  \frac {\beta''(z)} 2
     -  \frac {c_{m_k}}{(z-t_{m_k})^{3}}\Big| \\
     & \quad \le
\sum_{n=1}^{m_k-1} \frac {|c_n|}{    (\gamma-\eps)^{3} \, |t_{m_k}|^{3}}
+ \sum_{n=m_k+1}^\infty \frac {|c_n|}{(\gamma-\eps)^{3}\, |t_n|^{3} } \\
& \quad \le
 \frac {B|c_{m_k}|}{|t_{m_k}|^{3} (\gamma-\eps)^{3} } \,
       \sum_{n=1}^{m_k-1} \big( \frac u g \big)^{m_k-n}
+ \frac B {(\gamma-\eps)^{3} }
\sum_{n=m_k+1}^\infty \frac {|c_{m_k}|} {|t_{m_k}|^3}\; g^{-n+m_k} \\
& \quad \le
K(\eps)\,
\big|\frac {c_{m_k}}{(z-t_{m_k})^{3}}\big|
\qquad\quad \text{for} \enspace
0<|z-t_{m_k}|<\eps|t_{m_k}|,
\end{aligned}
\neqn
where
\[
K(\eps) \defin \, B \Big( \frac\eps{\gamma-\eps} \Big)^{3} \Big[
\frac {u}{g-u} + \frac 1 {g-1} \Big].
\]
Choose a small $\eps\in (0, \gamma)$
such that $K(\eps)<\frac 1 2$.
Assume now that $z\in [(1+\eps_1) t_{m_k}, (1+\eps) t_{m_k})$,
where $\eps_1\in (0,\eps)$ is a constant.
Then the inequality
$|a^{-3}-b^{-3}|\le 3|a-b|\big( \min(|a|, |b|) \big)^{-4}$ and
\eqref{est-der}, together with the triangle inequality, give
\beqn
\label{incircle}
\Big|\frac {\beta''(z)} 2
     -  \frac {c_{m_k}}{((1+\eps_1)t_{m_k}-t_{m_k})^{3}}\Big|
     \le
\frac 23 \; \Big| \frac
{c_{m_k}}{((1+\eps_1)t_{m_k}-t_{m_k})^{3}}\Big| \neqn if $\eps_1$
is sufficiently close to $\eps$. By property (3) from Lemma C,
$|c_{m_k}|=u^{-m_k}t_{m_k} p(m_k)\ge \eps_2>0$ for all $k$. Now it
follows from \eqref{incircle} that there are constants $\zeta\in
\BC$, $|\zeta|=1$ and $\rho>0$ such that
$$
|f''(t)| \ge \rho {|t_{m_k}|^{-3}} {|c_{m_k}|}
\ge \rho \,\eps_2\, |t_{m_k}|^{-3}
$$
for all $k$ and all $t \in [(1+\eps_1) |t_{m_k}|, (1+\eps)|t_{m_k}|)$, where
$f(t)=\Re \big(\zeta \beta( t\cdot t_{m_k}/|t_{m_k}|)\big)$.
So Lemma \ref{div-diff} yields that there
is a subinterval of $[(1+\eps_1) |t_{m_k}|, (1+\eps)|t_{m_k}|)$
of length $(\eps-\eps_1) |t_{m_k}|/9$ on which
$|f(t)|\ge \eps_3 t^{-1}$, where $\eps, \eps_1, \eps_3>0$ do not depend on $k$.
This contradicts the assumption \eqref{limst3}.

We conclude that the sum $\sum_n |t_n^{-1}\,c_n|$ converges.
Take some $\tau\in (0, \frac \gamma 2)$. Then
the discs $B(t_n, \tau\, |t_n|)$
are pairwise disjoint. Let $U=U(\tau)$ be their union.
Choose $\tau$ so small that the set
\[
A=\{r>0: \partial B(0,r)\subset \BC\sm U(\tau)\}
\]
satisfies $\limsup_{R\to\infty} R^{-1}\,\big|A\cap [0,R]\big|>0$.
Now one can apply the Lebesgue dominated convergence theorem to
the sum $\be(z) = \varkappa + \sum_n \frac{c_n z}{t_n(t_n-z)}$. Since
$|z/(t_n-z)|\le C(\tau) <\infty$ for all $z\notin U$ and all $n$,
one gets that the limit of $\beta(z)$ as
$|z|\to\infty, |z|\in A$ exists and equals $\varkappa - \sum_n t_n^{-1} c_n$.
Hence $\sum_n t_n^{-1}\,c_n = \varkappa$.
\end{proof}

\begin{proof}[Proof of Theorem \ref{comp-var-thm1}]
Given an integer $\ell\ge 1$, consider the statements:
\vskip.2cm

$(1)_\ell$ \enspace $\sum_n |c_n t_n^{\ell-2}|<\infty$;

\vskip.2cm

$(2)_\ell$ \enspace $\sum_n c_n t_n^{\ell-2}=0$ if $\ell\ge 2$
and $\sum_n c_n t_n^{\ell-2}=\varkappa$ if $\ell=1$;

\vskip.2cm

$(3)_\ell$ \enspace $\beta(z)=z^{-\ell} \beta_{\ell}(z)$, where
\[
\beta_\ell(z)= \sum_n c_n t_n^\ell \bigg(\frac 1 {t_n-z} - \frac 1 {t_n}\bigg)
= z \sum_n c_n t_n^{\ell-1} \frac 1 {t_n-z}. % , \qquad m\ge 1.
\]

\vskip.2cm

\noindent Lemma \ref{comp-var-lem-old} gives $(1)_1$, $(2)_1$ and
$(3)_1$. If for some $\ell\ge 1$, $(1)_\ell$, $(2)_\ell$ and
$(3)_\ell$ have been obtained, one applies Lemma
\ref{comp-var-lem-old} to $\wt c_n=c_n t_n^\ell$ and to
$\beta_\ell$ and gets properties $(1)_{\ell+1}$ and
$(2)_{\ell+1}$, which imply that $\beta_{\ell+1}(z)= z
\beta_\ell(z)$. This gives $(3)_{\ell+1}$.

So, by induction, the equalities $(1)_\ell$, $(2)_\ell$ and $(3)_\ell$ hold for any $\ell\ge 1$.
\end{proof}
\medskip

\section{Some additional remarks}
\label{addit}

First let us give an example of a natural class of compact normal lacunary
operators.

\begin{example}
\label{ex-lac-comp}
Let $\eta$ be an analytic function in an annulus $\an=\{z\in \BC: r<|z|<R\}$, where
$0<r<1<R$. Suppose it has the form $\eta(z)=\eta_0(z)+\eta_1(z)$, where
$\eta_0(z)=\tau_1 (R-z)^{-a} +  \tau_2 (r^{-1}-z^{-1})^{-a}$ and
$\eta_1$ is smoother than $\eta_0$ in the sense that
$\eta_1\in H^p(\an)$ for some $p>1/a$
(the powers are defined by using the principal branch of the logarithm).
Assume that $a>1$,
$\tau_1, \tau_2\in \BC$ are nonzero and
$\tau_1/\tau_2\notin (0,+\infty)$. Denote by $\BT$ the unit circle in $\BC$.
Then the convolution operator
\[
(\A_\eta f)(e^{i\tht}) = \frac 1{2\pi}\,\int_0^{2\pi} \eta(e^{i(\tht-x)})f(e^{ix})\, dx
\]
on $L^2(\BT)$
is compact, normal and has lacunary spectrum. Indeed,
in the Fourier representation $\A_\eta$ is just the multiplication operator
on $\ell^2(\BZ)$
by the sequence of Fourier coefficients $\{\hat\eta(n)\}$ of the function
$\eta|_{\BT}$.
Notice that $\hat\eta_0(n)\sim \tau_1 R^{-a-n} n^{a-1}$ as
$n\to+\infty$ and $\hat\eta_0(n)\sim \tau_2 r^{a - n} |n|^{a-1}$ as
$n\to -\infty$, whereas the assumption on $\eta_1$ implies that
$\hat\eta_1(n)/\hat\eta_0(n)\to 0$ as $n\to \pm\infty$, see
\cite[Theorem 6.4]{Dur}.
This gives our assertion.
\end{example}

The next proposition gives an explicit form
of the space $\cup_n \ker \LL^{*n}$ whenever this space is finite dimensional.

\begin{proposition}
\label{prop-LL}
Let $\LL$ be given by \eqref{LL} and not all conditions
$({\rm M }_{n})$ are fulfilled. Choose the integer
$k\ge 0$ so that
$({\rm M }_{1}), \dots, ({\rm M }_{k})$ are fulfilled
and $({\rm M }_{k+1})$ either is not fulfilled or has no sense (that is,
the sum diverges). Let $\ell\ge 0$ be the largest integer such that
$b\in \Ran \A^{*\ell}$, and put $b=\A^{*s} b_s$, where $b_s\in H$
($s=1, \dots, \ell$).  Then
\[
\cup_n \ker \LL^{*n}= \ker \LL^{*m} = \spa \{b_1, \dots, b_m\},
\]
where $m=\min(k, \ell)$.
\end{proposition}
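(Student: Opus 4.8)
The plan is to identify $\cup_n \ker \LL^{*n}$ directly with a concrete finite-dimensional subspace, using the formula $\LL^* x = \A^* x + \langle x, a\rangle b$ and the characterization of membership in $\ker \LL^{*m}$ in terms of iterated images of $b$ under the (self-adjoint, injective) operator $\A^*$. First I would observe that a vector $y$ lies in $\ker \LL^{*m}$ precisely when $\LL^{*j} y \in \ker \LL^*$ for $j = m-1$, and unravel what $\LL^{*j} y$ is. Since $\LL^* y = \A^* y + \langle y,a\rangle b$, an inductive computation shows $\LL^{*j} y$ is a linear combination of $\A^{*j} y$ and of vectors $\A^{*i} b$, $0 \le i \le j-1$, with coefficients that are (partly) determined by the moment-type sums $\langle \A^{*p} y, a\rangle$. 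The crucial simplification is that $\ker \LL^* = \spa\{b\} \cap (\text{something})$ is easy to describe: $\LL^* y = 0$ means $\A^* y = -\langle y, a\rangle b$, so $y \in \ker \LL^*$ forces $b \in \Ran \A^*$ (unless $y=0$) and then $y = -\langle y,a\rangle (\A^*)^{-1} b = -\langle y,a\rangle b_1$; hence $\ker \LL^*$ is at most one-dimensional, equal to $\spa\{b_1\}$ when $b_1$ is an eigenvector-type relation is consistent, and $\{0\}$ otherwise.

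Next I would set up the induction on $m$, showing that $\ker \LL^{*m} = \spa\{b_1,\dots,b_m\}$ as long as $m \le \min(k,\ell)$, and that the chain stabilizes at $m = \min(k,\ell)$. The forward inclusion: given $y \in \spa\{b_1,\dots,b_m\}$ with $m \le \ell$, write $y = \sum_{s=1}^m \alpha_s b_s$; using $\A^* b_{s} = b_{s-1}$ (with $b_0 := b$) one computes $\LL^{*} y$ and checks, using the vanishing of the moments $(\mathrm M_1),\dots,(\mathrm M_k)$ (which translate into $\langle \A^{*p} b, a\rangle$-type identities, equivalently $\langle b_s, a\rangle$ relations via $\langle b_s,a\rangle = \langle b, (\A^{*})^{-s} a\rangle$ when these make sense — this is where one must be careful and use the precise statement that $({\rm M}_1),\dots,({\rm M}_k)$ hold), that $\LL^{*m} y = 0$. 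Here the hypothesis $m \le k$ guarantees exactly the moment equalities we need to kill the lower-order terms $\A^{*i} b$ appearing in the expansion. For the reverse inclusion, given $y \in \ker \LL^{*m}$, peel off one application of $\LL^*$ at a time: $\LL^{*(m-1)} y \in \ker \LL^*= \spa\{b_1\}$, which forces $b_1$ to exist (so $\ell \ge 1$) and a moment relation to hold (so $k \ge 1$); iterating, each step forces the next $b_s$ to exist and the next moment $(\mathrm M_s)$ to hold, until we run out at $s = \min(k,\ell)$. This simultaneously proves $\cup_n \ker \LL^{*n} = \ker \LL^{*m}$ with $m = \min(k,\ell)$, since going one step further would require either $({\rm M}_{k+1})$ (which fails or is meaningless by hypothesis) or $b \in \Ran \A^{*(\ell+1)}$ (which fails by maximality of $\ell$).

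The main obstacle I anticipate is bookkeeping the exact correspondence between "the moment equation $({\rm M}_s)$ holds" and "the algebraic relation needed to cancel the $\A^{*i} b$ terms at the $s$-th stage holds," because $({\rm M}_s)$ is a statement about the convergent series $\sum_n s_n^{-s}\langle P_n a, b\rangle$, whereas the operator computation produces expressions like $\langle (\A^*)^{-(s-1)} b, a\rangle$ or sums against spectral projections; one must verify these are literally the same sums (using $\langle P_n a, b\rangle = \langle P_n a, P_n b\rangle$ and $s_n = \bar s_n$ only in the self-adjoint model, though here $\A$ is merely normal so care with conjugates is needed) and, more delicately, that convergence of the relevant series is automatic at each stage $s \le k$ because $({\rm M}_1),\dots,({\rm M}_s)$ are assumed to \emph{hold}, hence in particular the series converge. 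Once that dictionary is pinned down, the induction is routine. I would also remark at the outset that finiteness of $\cup_n \ker \LL^{*n}$ is guaranteed here: by Proposition~\ref{prop1}, infinite-dimensionality would require \emph{all} $({\rm M}_k)$ to hold and $b \in \Ran \A^n$ for all $n$, contradicting the standing hypothesis that not all $({\rm M}_n)$ are fulfilled; so $m = \min(k,\ell) < \infty$ and the displayed identity makes sense.
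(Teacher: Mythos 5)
Your argument is correct and is exactly the direct computation the authors intend: the paper omits the proof as ``completely straightforward,'' and your induction ($\ker\LL^{*}=\spa\{b_1\}$ iff $\ell\ge 1$ and $({\rm M}_1)$ holds, then peeling off one power of $\LL^*$ at a time, the chain stopping as soon as either $b_{s+1}$ fails to exist or $({\rm M}_{s+1})$ fails or diverges) is that computation. The one detail to repair in the write-up is the dictionary you yourself flag: the correct identity is $\langle b_s,a\rangle=\sum_n \bar s_n^{-s}\langle P_n b,P_n a\rangle=\overline{\sum_n s_n^{-s}\langle P_n a,b\rangle}$ (automatically absolutely convergent once $b_s\in H$, which also settles the convergence issue at each stage $s\le\ell$), not $\langle b,(\A^{*})^{-s}a\rangle$; with this, $({\rm M}_1)$ is precisely the consistency condition $\langle b_1,a\rangle=-1$ and $({\rm M}_s)$, $s\ge 2$, is $\langle b_s,a\rangle=0$, as your induction requires.
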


We omit the proof, which is completely straightforward.
The same calculations imply Proposition~\ref{prop1}.

Let $\LL_1, \LL_2$ be two bounded operators
on Hilbert spaces $H_1$, $H_2$, respectively.
In \cite{baryak}, we used the following definition:
$\LL_2$ is said to be \textit{$d$-subordinate} to $\LL _1$
 ($\LL _1\precd \LL _2$) if there exists a bounded linear operator
$Y:H_1\to H_2$, which intertwines $\LL _1$ with $\LL _2$ and has a dense range:
\[
Y\LL _1=\LL _2Y; \qquad \clos\Ran Y=H_2.
\]

As it was mentioned there, if
$\LL _1\precd \LL _2$ and $\LL _1$ is complete then
$\LL _2$ is complete. In connection with the present article, we can also mention the following fact.
\begin{proposition}
If
$\LL_1$ and $\LL_2$ are compact,
$M(\LL_1)=\clos \LL_1^N H$ and
$\LL_2$ is $d$-subordinate to $\LL _1$
then $M(\LL_2)=\clos \LL_2^N H$.
\end{proposition}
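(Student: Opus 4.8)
The plan is to transport everything through the intertwiner $Y\colon H_1\to H_2$, using only that $Y$ has dense range and that $\LL_2^N$ is continuous. First I would record the two cheap inclusions. On one side, by the observation quoted in the introduction (combined with $(\ker B^*)^\perp=\clos\Ran B$ applied to $B=\LL_2^N$), one always has $M(\LL_2)\subseteq(\ker \LL_2^{*N})^\perp=\clos \LL_2^N H_2$; so only the reverse inclusion is at stake. On the other side, the hypothesis $M(\LL_1)=\clos \LL_1^N H_1$ gives in particular $\LL_1^N H_1\subseteq M(\LL_1)$.

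Next I would check that $Y$ maps $M(\LL_1)$ into $M(\LL_2)$. If $x$ is a root vector of $\LL_1$ for an eigenvalue $\la\ne 0$, say $(\LL_1-\la)^k x=0$, then
\[
(\LL_2-\la)^k Yx = Y(\LL_1-\la)^k x = 0,
\]
so $Yx$ is either $0$ or a root vector of $\LL_2$ corresponding to the nonzero eigenvalue $\la$; in both cases $Yx\in M(\LL_2)$. Since $M(\LL_1)$ is the closed linear span of such vectors $x$, since $Y$ is bounded, and since $M(\LL_2)$ is closed, it follows that $Y\bigl(M(\LL_1)\bigr)\subseteq M(\LL_2)$.

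Finally I would combine these. Iterating $Y\LL_1=\LL_2 Y$ gives $Y\LL_1^N=\LL_2^N Y$, hence
\[
\LL_2^N (Y H_1) = Y(\LL_1^N H_1)\subseteq Y\bigl(M(\LL_1)\bigr)\subseteq M(\LL_2).
\]
Because $\clos Y H_1=H_2$ and $\LL_2^N$ is continuous, $\LL_2^N H_2\subseteq\clos\bigl(\LL_2^N(Y H_1)\bigr)\subseteq\clos M(\LL_2)=M(\LL_2)$, so $\clos \LL_2^N H_2\subseteq M(\LL_2)$. Together with the first paragraph this yields $M(\LL_2)=\clos \LL_2^N H_2$.

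There is no serious obstacle here; the only point needing a little care is that $\Ran Y$ is merely dense rather than all of $H_2$ — which is exactly why one applies the continuous operator $\LL_2^N$ to the closure $\clos Y H_1$ instead of trying to use $Y$ itself, and why the statement is phrased with closures on the $\LL_1$-side as well. One should also note that a nonzero image $Yx$ genuinely counts as a root vector for a \emph{nonzero} eigenvalue of $\LL_2$, which is automatic since $\la\ne 0$ is carried over unchanged.
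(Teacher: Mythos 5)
Your proof is correct. The paper omits the argument entirely (``we leave it to an interested reader''), and your three steps --- the general inclusion $M(\LL_2)\subseteq(\ker \LL_2^{*N})^\perp=\clos\LL_2^N H_2$, the fact that the intertwiner sends root vectors to root vectors (or to $0$), and the passage from the dense range of $Y$ to all of $H_2$ via continuity of $\LL_2^N$ --- constitute exactly the straightforward argument the authors intend.
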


The proof is straightforward, and we leave it to an interested reader.

As it follows from \eqref{reslv-L-A} and Theorem~\ref{comp-var-thm1},
if some of the infinite sequence of moment equalities
\eqref{seqs-eqs} fail, then there is an estimate
\[
\|(\LL-z)^{-1}\|\le |z|^{-s}
\]
for the resolvent of $\LL$ for a set of the form $\{z:\;
|z|^{-1}\in A\}$, where $A\subset [0,+\infty)$ is a closed subset
of linear density $1$ at infinity. This can be compared with the estimates, which are used in the proof of
the Keldy\v s theorem: in the conditions of this theorem,
for any $\eps>0$, an estimate
$
\|(\LL-z)^{-1}\|\le C_\eps |z|^{-1}
$
holds for sufficiently small $|z|$ in the complement of the union of
the angles $\al_k-\eps\le \arg z \le \al_k +\eps$
(see \cite{markus-book}, Lemma~3.2).
In particular, for each $\eps>0$ there exists $\de>0$ such that all non-zero
spectrum of $\LL$, which is contained in the disc $|z|<\de$,
lies in the union of the above angles.

The next example shows
that for weak perturbations $\LL=\A(I+S)$ that do not satisfy
the requirement $\ker(I+S)=0$,
the last geometric property of the spectrum does not hold in general.

\begin{example}
\label{ex}
There is an operator
$\LL=\A(I+S)$, which has the form \eqref{LL}, where $\A$ is a cyclic selfadjoint operator
with lacunary spectrum $\{2^{-n}: \enspace n\in \BN\}$ such that $\si(\LL)= \{0\}\cup \{i2^{-n}:n\in \BN, n\ge 2\}$.
\end{example}

To construct this operator, consider the function
\[
\psi(z)= \frac {2}{2-z}\, \prod_{n=2}^\infty \phi_n(z), \qquad \text{where}\quad \phi_n(z)=\frac {2^n+iz}{2^n-z}\, .
\]
Notice that, given any constant $K\in (0,1)$, there exists $C_K>0$ such that
$|\phi_n(z)-1|\le C_K 2^{-n}|z|$ if $2^{-n}|z|<K$ and
$|\phi_n(z)+i|\le C_K 2^{n}|z|^{-1}$ if $2^{n}|z|^{-1}<K$. It follows that the above product converges
for any $z\ne 2^n, n\in \BN$ and defines a meromorphic function on $\BC$. The residues
$c_n= -\operatorname{Res}_{2^n}  \psi$ satisfy $|c_n|\asymp 1$. The above estimates
for $\phi_n(z)$ imply that $\max_{|z|=3\cdot 2^k}|\psi(z)|\le C 2^{-k}$.
Put $t_n=2^n$. By writing down
the residue theorem for the function $\frac {\psi(\cdot)}{(\cdot)-\zeta}$ on the contours
$|z|=3\cdot 2^k$, where $\zeta$ is fixed and letting $k\to\infty$, one gets
\[
\psi(\zeta)=\sum_{n\in \BN} \frac{c_n}{t_n-\zeta}= 1 + \sum_{n\in
\BN} c_n\Big(\frac{1}{t_n-\zeta}-\frac{1}{t_n}\Big)
\]
(here we use that $\psi(0)=1$). Take any sequences $a=\{a_n\}$ and
$b=\{b_n\}$ such that $\{2^n a_n\}$ and $\{b_n\}$ are in $\ell^2$
and $c_n=-t_n^2 a_n b_n$. (For instance, one can put $a_n=t_n^{-3/2}$ and
$b_n=-c_n t_n^{-1/2}$.)
The operator $\A$ on $H=\ell^2$, defined by
$\A\,\{x_n\}=\{2^{-n}x_n\}$, is cyclic, compact, selfadjoint and
has trivial kernel. Since $\{2^n a_n\}$ is in $\ell^2$, the
operator $\LL=\A+ab^*$ on $\ell^2$ has the form $\LL=\A(I+S)$,
where $S$ is a rank one operator, so $\LL$ is a weak perturbation
of $\A$. Since for this perturbation, $\be(z^{-1})=\psi(z)$, we
get that the spectrum of $\LL$ is $\{0\}\cup \{i2^{-n}:n\in \BN,
n\ge 2\}$.

Notice that in this example, we get that
$\sum a_nb_n s_n^{-1} = -\sum c_n t_n^{-1}=-1$, so that
the first moment equation $(M_1)$ holds and one has
absolute convergence in its left hand part.
The general term of the sum
$\sum a_nb_n s_n^{-2}$ in $(M_2)$ does not tend to zero.
So the hypotheses of Theorem~\ref{thm0angles} do not hold.
This example shows that in this case, although the
perturbation is weak, it spectrum is not contained in the union
of angles, given by Theorem~\ref{thm0angles}.
By our Theorem~\ref{3to1}, $\LL$ and $\LL^*$ are nearly
complete.

In fact, it is easy to get that $|\beta(z)|\ge C|z|^{-1}$ for $|z|\ge 1$,
$z$ outside arbitrarily small angles around the two coordinate axes. So
in this particular case, it is easy to get that $\LL$ and $\LL^*$ are nearly complete either
by using the argument of the proof of Theorem~\ref{thm0angles} or the criterion of completeness, given in
\cite{baryak}, Proposition~3.1.

\section{The case of singular perturbations of unbounded normal operators}
\label{singul}

In this section we consider an analogue of Theorem~\ref{3to1} for
rank one perturbations of unbounded normal operators. Let $\A_0$
be a compact normal operator and let $\LL_0$ be its rank one
perturbation such that $\ker \A_0 = \ker \LL_0 = 0$. Put $\A =
\A_0^{-1}$ and let $\LL = \LL_0^{-1}$ be the algebraic inverse of
$\LL_0$ defined on the range of $\LL_0$ (In the last two sections,
to distinguish between the bounded and unbounded operators, we use
the notation $\A_0$, $\LL_0$ for compact operators and $\A$, $\LL$
for their unbounded inverses.) One should expect that $\LL$ is in
a certain sense a rank one perturbation of the unbounded normal
operator $\A$. However, it is not necessarily a relatively compact
(rank one) perturbation $\A$. To formalize this we need to
introduce the notion of a singular rank one perturbation.

Now let $\A$ be an unbounded normal operator on a Hilbert space
$H$ and let $G(\A)$ stand
for the graph of $\A$; it is a subspace of $H\oplus H$.
We assume that $\A^{-1}$ exists and is bounded.
We say that $\LL$ is a \textit{singular balanced rank one perturbation of $\A$}
if $G(\A)\cap G(\LL)$ has codimension one in both spaces $G(\A)$ and
$G(\LL)$. Here we follow~\cite{baryak}; in this paper
a definition of singular balanced rank $n$ perturbations of a
not necessarily normal operator $\A$ was also given. If
one takes for $\A$ an ordinary differential operator on an
interval and changes its defining boundary conditions without
changing the formal differential expression, then one obtains this
kind of perturbation of $\A$.

If $\LL_0$ is a usual rank one perturbation of $\A_0 = \A^{-1}$, which
has zero kernel and $\LL=\LL_0^{-1}$ is its algebraic inverse,
then $\LL$ is a singular
rank one perturbation of $\A$; moreover, as it will be explained
a little bit later, ``most'' of singular rank
one perturbations of $\A$ are obtained in this way. In this Section we
reformulate our completeness result, Theorem~\ref{3to1}, for
singular rank one perturbations, and give some examples where
singular rank one perturbations of operators with lacunary spectra
appear.

To give a description of all singular rank one perturbations, we
need to introduce some new notions.
We define \textit{the extrapolation Hilbert space} $\A H$ as the set of
formal expressions $\A  x$, where $x$ ranges over \textit{the
whole space} $H$. Put $\|\A x\|_{\A H}= \|x\|_H$ for all $x\in H$.
The formula $x=\A (\A^{-1}x)$ allows one to interpret $H$ as a
linear submanifold of $\A H$. We consider the scale of spaces
$$
\cDA \subseteq H \subseteq \A H.
$$
Notice that $\cDA =\cD(\A^*)$. The pairing
$\langle x, y \rangle \defin \langle \A x, \A^{*-1}y \rangle$,
$x\in \cDA, y\in \A H$, gives rise to a natural identification
$\cDA = (\A H)^*$.

The set of rank $1$ singular balanced perturbations of $\A $ can
be parametrized by what we will call $1$-data for $\A $. By
\emph{$1$-data} we mean a triple $ (a,b,\varkappa)$, where $a, b\in \A
H$ are non-zero, $\varkappa\in \BC$ and the following condition is
fulfilled:

\smallskip

{\bf (A)} If $a\in H$, then $\varkappa\ne \langle \A^{-1}a, b\rangle$.

\smallskip

Given $1$-data $(a,b,\varkappa)$, the corresponding
rank $1$ singular balanced perturbation
$\LL= \LL(a,b,\varkappa)$ of $\A$ is defined as follows:
\beqn
\begin{aligned}
\label{2app}
\cD(\LL )&\defin \big\{
y=y_0+c\A^{-1}a: \\
& \qquad \qquad c\in \BC,\, y_0\in \cDA,\, \varkappa  c+b^*y_0=0
\big\};                     \\
\LL \, y& \defin \A y_0, \quad y\in \cDL.
\end{aligned}
\neqn Condition  $(A)$  is equivalent to the uniqueness of the
decomposition $y=y_0+c\A^{-1}a$ for $y\in \cD(\LL)$ and hence to
the correctness of the definition of $\LL$.

As it is shown in ~\cite{baryak}, any singular balanced rank one
perturbation of $\A$ has a form
$\LL = \LL(a,b,\varkappa)$ for certain $1$-data $(a,b,\varkappa)$.
Moreover, if $\varkappa\ne 0$, then, by \cite[Proposition~1.6]{baryak},
$\LL= \LL_0^{-1}$ (the algebraic inverse), where $\LL_0$ is a rank one perturbation
of $\A_0 = \A^{-1}$:
\beqn
\label{inverse}
\LL_0 = \A_0 - \varkappa^{-1} \A_0 a (\A_0 b)^*.
\neqn

One can write
\beqn
\label{unb-norm}
\A x = \sum_{n\in \BN} t_n
P_n x,
\neqn
where the finite dimensional orthogonal projections
$P_n$ are as above: $P_nP_m=0$ for $m\ne n$, $\sum_n P_n=I$, but
now $|t_n|\to \infty$
(and $t_n\ne 0$ for all $n$).
The domain of $\A$ is the set of vectors
$x\in H$, for which the above sum converges.

We will say that the singular perturbation $\LL(a, b, \varkappa)$ is \textit{degenerate}
if $\langle P_n a, b\rangle =0$ for all $n$ and at the same time $\varkappa=0$
(it is consistent with the condition (A) if $a\notin H$).
We will say that $\LL(a, b, \varkappa)$ is \textit{non-degenerate} in all other cases.

It is easy to check that the spectrum of $\LL$
coincides with its point spectrum and equals to the zero set of the meromorphic function
\[
\be_{\LL}(\la)
=\varkappa+\la b^*(\A -\la)^{-1}\A^{-1} a
= \varkappa + \la \sum_k
\,
\frac{t_n \langle P_n \A^{-1} a, \A^{-1}b \rangle} {t_n-\la}
\]
(see \cite{baryak}).
So, if the operator $\LL(a, b, \varkappa)$ is degenerate, then each point $\la\in \BC$ is its eigenvalue.
If $\LL(a, b, \varkappa)$ is non-degenerate, then its spectrum is discrete.

Whereas the point $0$ was a special point of the spectrum for compact operators,
in the present context of unbounded operators, this role passes
to the point $\infty$. Analogously to the case of bounded operators,
given an operator $\LL$ on $H$ with compact resolvent,
we say that $\LL$ is \textit{complete} if its root vectors span $H$
and we say that $\LL$ is \textit{nearly complete} if its root vectors span
a subspace of $H$ of finite codimension.

\begin{theorem}
\label{thm-sing-pert} Let
$\A$ be a normal operator with compact resolvent, given by~\eqref{unb-norm},
which has lacunary spectrum $\{t_n\}_{n\in \BN}$.
Suppose that $0\notin\si(\A)$.
Let $(a,b,\varkappa)$ be $1$-data for $\A$, and let $\LL=\LL(a,b,\varkappa)$ be the corresponding
singular perturbation of $\A$, which is non-degenerate.
If $\LL$ is not nearly complete, then the following infinite sequence
of ``moment'' equations holds for all $k\in \BZ$, $k\ge -1$:
\[
\label{seqs-eqs-sing}
({\rm S }_k) \quad \qquad\qquad\qquad\qquad
\sum_n t_n^k \langle P_n a, b\rangle =
\begin{cases}
  \varkappa, \quad & k= -1, \\
  0, \quad & k \ge 0. \\
\end{cases}
\qquad\qquad
\]
\end{theorem}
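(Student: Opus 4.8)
The plan is to reduce Theorem~\ref{thm-sing-pert} to the already proved Theorem~\ref{comp-var-thm1} by passing to the compact inverse $\LL_0 = \LL^{-1}$ and expressing everything through the function $\be$ of \eqref{beta}. Since $\varkappa \ne 0$ would be needed for \eqref{inverse} to apply directly, I would first dispose of the case $\varkappa = 0$ separately: if $\varkappa = 0$ and $\LL(a,b,\varkappa)$ is non-degenerate, then $\langle P_n a, b\rangle \ne 0$ for some $n$, and I would argue that either the perturbation can still be analyzed via the meromorphic function $\be_\LL(\la) = \varkappa + \la \sum_n t_n \langle P_n \A^{-1}a, \A^{-1}b\rangle/(t_n - \la)$ directly, or a small shift reduces to the invertible case. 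In the main case $\varkappa \ne 0$, I would set $\A_0 = \A^{-1}$, which is compact normal with eigenvalues $s_n = t_n^{-1} \to 0$ and lacunary spectrum (lacunarity of $\{t_n\}$ with $|t_n|\to\infty$ is equivalent to lacunarity of $\{s_n\}$ with $s_n \to 0$), and use \eqref{inverse} to write $\LL_0$ as a genuine rank one perturbation of $\A_0$ with vectors $\widetilde a = -\varkappa^{-1}\A_0 a$, $\widetilde b = \A_0 b$, so that $\langle P_n \widetilde a, \widetilde b\rangle = -\varkappa^{-1} s_n^2 \langle P_n a, b\rangle$.

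The next step is to relate near completeness of the unbounded operator $\LL$ to near completeness of its compact inverse $\LL_0$. The root vectors of $\LL$ corresponding to a nonzero eigenvalue $\mu$ are exactly the root vectors of $\LL_0$ corresponding to $\mu^{-1}$, and since $\LL$ has compact resolvent with $0 \notin \si(\A)$, the relevant spectral subspaces coincide; the special point $\infty$ for $\LL$ corresponds to the special point $0$ for $\LL_0$. Thus $M(\LL) = M(\LL_0)$ as closed subspaces of $H$, and $\LL$ is nearly complete if and only if $\LL_0$ is nearly complete. Here I would need to be slightly careful that non-degeneracy of $\LL(a,b,\varkappa)$ guarantees $\LL_0$ is a non-trivial (non-zero-vector) rank one perturbation and that $\ker \A_0 = 0$, so that the hypotheses feeding into Theorem~\ref{3to1} are met.

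Now I would apply the machinery of Sections~\ref{prelim}--\ref{mainpr} to $\LL_0$. Define $\be(z) = \varkappa + \sum_n c_n(\frac{1}{t_n - z} - \frac{1}{t_n})$ with $c_n = -t_n^2 \langle P_n a, b\rangle = \varkappa s_n^{-2}\langle P_n \widetilde a, \widetilde b\rangle \cdot(\ldots)$ — matching the normalization so that $\be(z) = \varkappa\,\be_{\LL_0\text{-type}}$; one checks $\sum_n |c_n/t_n^2| = \sum_n |\langle P_n a, b\rangle| < \infty$ because $b \in \A H$ means $\A^{-1}b \in H$, i.e. $\sum_n |s_n|^2 \|P_n(\A^{-1}b)\|^2$-type bounds hold, and similarly for $a$, so by Cauchy--Schwarz the series converges. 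Assuming $\LL$ (hence $\LL_0$) is not nearly complete, Lemma~\ref{lemma-about-beta} gives $\limst_{z\to\infty} z^s \be(z) = 0$ for all $s \in \BN$, and Theorem~\ref{comp-var-thm1} then yields $\sum_n t_n^{-1} c_n = \varkappa$ and $\sum_n t_n^k c_n = 0$ for all integers $k \ge 0$. Substituting $c_n = -t_n^2 \langle P_n a, b\rangle$ converts these into $\sum_n t_n^{k+2}\langle P_n a, b\rangle$ vanishing for $k \ge 0$ and equalling $-\varkappa$... — one must track the sign and index shift carefully so that the final equations come out exactly as $(\mathrm S_k)$: $\sum_n t_n^k \langle P_n a, b\rangle = \varkappa$ for $k = -1$ and $= 0$ for $k \ge 0$.

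The main obstacle I expect is \textbf{not} the complex-analytic core (that is handed to us by Theorem~\ref{comp-var-thm1}) but the bookkeeping at the two interfaces: first, justifying rigorously that near completeness transfers between $\LL$ and $\LL_0 = \LL^{-1}$ in the singular-perturbation setting, including the correct handling of the roles of $0$ and $\infty$ and of the kernels $\cup_n \ker \LL_0^{*n}$ versus the behaviour of $\LL$ at infinity; and second, verifying that the convergence hypothesis $\sum_n |c_n/t_n^2| < \infty$ of Theorem~\ref{comp-var-thm1} really follows from $a, b \in \A H$ together with the $1$-data condition $(A)$ and non-degeneracy — this uses that $\A^{-1}a, \A^{-1}b \in H$ so the ``moment'' coefficients $\langle P_n a, b\rangle = \langle t_n P_n \A^{-1}a,\, \overline{t_n}\, P_n \A^{-1}b\rangle / |t_n|^2 \cdot |t_n|^2$ are summable in the right weighted sense. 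Once these two translations are in place, the proof is a direct quotation of Theorem~\ref{comp-var-thm1} applied to the inverted data.
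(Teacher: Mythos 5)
Your overall strategy is exactly the paper's: reduce to $\varkappa\ne 0$, pass to the compact inverse $\LL_0=\LL^{-1}=\A_0-\varkappa^{-1}\A_0 a(\A_0 b)^*$ via \eqref{inverse}, observe that $\A_0=\A^{-1}$ is compact normal with lacunary spectrum and that near completeness transfers between $\LL$ and $\LL_0$, and then invoke Theorem~\ref{3to1} (equivalently, Lemma~\ref{lemma-about-beta} plus Theorem~\ref{comp-var-thm1}). The architecture is right, but there is one concrete error in the middle that, as written, breaks the argument: you set $c_n=-t_n^2\langle P_n a,b\rangle$ with the \emph{original} singular data $a,b\in\A H$ and then claim $\sum_n|c_n/t_n^2|=\sum_n|\langle P_na,b\rangle|<\infty$ by Cauchy--Schwarz. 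This is false in general: $a,b\in\A H$ only gives $\A^{-1}a,\A^{-1}b\in H$, hence $\sum_n|\langle P_n\A^{-1}a,\A^{-1}b\rangle|<\infty$, while $\langle P_na,b\rangle$ is $|t_n|^2$ times that quantity, so $\sum_n|\langle P_na,b\rangle|$ may well diverge. The coefficients that Theorem~\ref{comp-var-thm1} should receive are those of the \emph{bounded} perturbation $\LL_0$, namely $c_n=-t_n^2\langle P_n\tilde a,\tilde b\rangle$ with $\tilde a=-\varkappa^{-1}\A_0a$, $\tilde b=\A_0b$, for which $|c_n/t_n^2|=|\varkappa|^{-1}|\langle P_n\A^{-1}a,\A^{-1}b\rangle|$ is summable for free; with this normalization (and the constant term $1$ from \eqref{beta}) the conclusions of Theorem~\ref{comp-var-thm1} translate precisely into $({\rm S}_k)$ for $k\ge-1$, whereas with your $c_n$ the indices come out shifted by two and the equations $({\rm S}_{-1})$, $({\rm S}_0)$, $({\rm S}_1)$ are lost. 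You do flag the bookkeeping as a point to check, but the check as you sketch it goes through the wrong formula.

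The second, smaller gap is the case $\varkappa=0$, which you leave as an ``either/or''. The paper's route is your second branch: for $\la\in\rho(\A)$ one has $\LL(\A,a,b,\varkappa)-\la I=\LL(\A-\la I,a,b,\be_{\LL}(\la))$; non-degeneracy with $\varkappa=0$ forces some $\langle P_na,b\rangle\ne0$, so $\be_{\LL}$ is nonconstant and one can choose $\la$ with $\be_{\LL}(\la)\ne0$, making the shifted operator fall under the $\varkappa\ne0$ case. What still needs to be said is that the moment equations for the shifted data (which involve powers of $t_n-\la$ and the new constant $\be_{\LL}(\la)$) are equivalent to the original $({\rm S}_k)$ --- a direct but non-vacuous computation --- and that near completeness is unaffected by the shift. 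Once these two points are repaired, your proof coincides with the one in the paper.
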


\begin{proof}[Proof]
We reduce this assertion to the case of a compact operator.
First we observe that it is suffices to consider the case
when $\varkappa\ne 0$. Indeed,
by \cite[Proposition~1.6]{baryak},
for any $\la\in \rho(\A )$, one has
\beqn
\LL (\A ,a,b,\varkappa)-\la I=\LL \big(\A -\la I, a,b,\be_{\LL}(\la)\big).
\neqn
Notice that the function $\be_{\LL}(\la)$ has poles exactly at
the points $\{t_n\}$ and
its residue at $t_n$ equals
$t_n^2\langle P_n \A^{-1} a, \A^{-1}b \rangle$.
If all these residues are zero for all $n$, then
$\varkappa\ne 0$, by the non-degeneracy assumption.

If not all numbers $\langle P_n a, b \rangle$ are zero,
then $\be_{\LL}$ is non-constant, and therefore
$\be_{\LL}(\la)\ne0$ for some $\la$.
Operator $\LL-\la I$ is nearly complete if and only if $\LL$ is.
A direct calculation shows that moment equations
$({\rm S }_k)$ hold for $\LL$ if and only if they hold
for $\LL-\la I$.
So
we may assume that $\varkappa\ne 0$, just by replacing $\LL$ with
$\LL-\la I$, where
$\la\in \BC\setminus\sigma(\A)$
is any number such that $\beta_{\LL}(\la)\ne 0$.

If $\varkappa\ne 0$, then $\LL= \LL_0^{-1}$ where $\LL_0$ is a rank one perturbation
of $\A_0 = \A^{-1}$ given by formula \eqref{inverse}.
Notice that $\A^{-1}$ is a compact normal operator with lacunary spectrum
and that $\LL$ is nearly complete if and only if $\LL_0$ is. Now the statement follows
by applying Theorem~\ref{3to1} to $\LL_0$.
\end{proof}

Next we give examples of selfadjoint operators with compact resolvents and their singular
perturbations, which are motivated by applications.

\begin{example}
Consider an infinite symmetric Jacobi matrix with
diagonal entries $b_n$ and off-diagonal entries $a_n$, $n= 0, 1, \dots$,
where at least one of the sequences
$\{a_n\}$, $\{b_n\}$ grows exponentially.
In many cases, these matrices give rise to unbounded selfadjoint operators with lacunary spectrum.
As a particular example, take the position operator of the
Biedenharn--Macfarlane $q$-oscillator, studied
by Klimyk in \cite{Klimyk2005}. It corresponds to the values $b_n=0$, $a_n=((q^n-1)/(q-1))^{1/2}$,
where $q>1$ is a fixed number.
This Jacobi matrix defines a symmetric operator $\A_0$ with indices $(1,1)$.
In \cite[Theorem~1]{Klimyk2005}, a parametrization of all self-adjoint extensions
of $\A_0$ is given and their spectra are calculated.
Any self-adjoint extension of $\A_0$ is lacunary.
Therefore Theorem~\ref{thm-sing-pert} applies to any operator
$\LL$ such that $\A_0\subset \LL$ and $\dim \cD(\LL)/\cD(\A_0)=1$.
There are many other Jacobi matrices, which give rise to different classes of
$q$-orthogonal polynomials; see
\cite{Koek-L-Sw-book}. By inspecting the orthogonality relations one sees that
there are other cases when
the corresponding unbounded selfadjoint operators are lacunary.

An easier case is obtained if
the off-diagonal entries of the Jacobi matrix are subordinate to the diagonal ones.
Namely, assume that $b_n>0$, $a_n\in \RR$,
\[
\frac{b_{n+1}}{b_n}\to Q>1
\enspace
\text{and} \enspace
\frac{a_n}{b_n}\to 0 \quad
\enspace
\text{as} \enspace n\to+\infty.
\]
Then our Jacobi matrix defines an unbounded selfadjoint
operator $\A$ on $\ell^2(\mathbb N)$, which is semi-bounded from below and has lacunary spectrum.
To see it, consider the diagonal matrix $D$ with the sequence
$\{b_n\}$ on the diagonal. Then $D$ defines a positive unbounded selfadjoint
operator on $\ell^2(\BZ_+)$. Define $\A$ by $\A x= D x+ F x$,
$x\in \cD(\A)\defin\cD(D)$, where $F$ is the
operator given by the Jacobi matrix with the same off-diagonal entries $a_n$ and
zero entries on the diagonal. It is easy to see that $\A$ is a relatively compact
perturbation of $D$ and therefore $\A$ is selfadjoint and has discrete spectrum, see
\cite[Theorem 9.9]{Weidmann}. Consider the circles $|z-b_n|=\sigma b_n$ around eigenvalues
of $D$. If $\sigma>0$ is small, each of them contains exactly one eigenvalue of $D$. Moreover,
it is easy to see that $\|(D-z)^{-1}F\|\le 1/2$ if $z$ is outside all these circles and
is sufficiently large. It follows that for large $n$,
the eigenvalues of the operator
$D+tF$ (which has discrete spectrum) do not cross the
the circle $|z-b_n|=\sigma b_n$ when the parameter $t$ traces the interval $[0,1]$.
By \cite[Lemma 8.1]{markus-book}, there is exactly one eigenvalue of $\A=D+F$ inside each such circle.
Therefore in this case, $\A$ is lacunary.

We remark that in general, to find out whether a
selfadjoint Jacobi matrix with exponentially growing entries is selfadjoint or has
defect indices $(1,1)$, one can apply
\cite{Chihara1989}, Theorem 2 and its Corollary.
\end{example}
\bigskip

\section{Proof of Theorem \ref{hhn}}
\label{sharpr}

\subsection{Functional model for rank one perturbations}
\label{funmod}

The proof of the main part of Theorem \ref{hhn}
uses a functional model for singular rank one perturbations
of unbounded selfadjoint operators with discrete spectrum, which are essentially
the algebraic inverses to rank one perturbations of compact selfadjoint operators.
This model was introduced in \cite{baryak}. Let us briefly recall its
statement (in the generality we need here).
For the details see \cite{baryak} or \cite[Section 4]{bbby}.

We consider the following objects:
\smallskip
\begin{itemize}
\item
$\{t_n\}$ is, as above, a sequence of real points such that $|t_n| \to \infty$
as $|n|\to \infty$, and $t_n\ne 0$. We can assume without loss of generality that
$\{t_n\}$ is an increasing sequence enumerated by $\mathbb{Z}$, $\mathbb{N}$ or
$-\mathbb{N}$.
%\smallskip
%\item $\mu=\sum_n \mu_n \delta_{t_n}$, $\mu_n>0$, and $\sum_n t_n^{-1}\mu_n<\infty$.
\smallskip
\item $A$ is an entire function which is real on $\mathbb{R}$
and has simple zeros  exactly at the points $t_n$.
\item Two sequences $\{a_n\}$ and $\{b_n\}$, $b_n\ne 0$ for any $n$,
and a complex number $\varkappa \ne 0$ satisfy

(1) $\sum_n \frac{|a_n|^2+|b_n|^2}{t_n^2} <\infty$;

(2)
$
\sum_n \frac{a_n \overline b_n}{t_n} \ne \varkappa
$
in the case when $\sum_n |a_n|^2 <\infty$.

\smallskip
\item
The entire function $E$ is given by $E = A-iB$, where
\begin{equation}
\label{form-e}
\frac{B(z)}{A(z)} = \delta+ \sum_n \bigg(\frac{1}{t_n-z}
- \frac{1}{t_n}\bigg) |b_n|^2,
\end{equation}
and $\delta$ is an arbitrary real constant.
It is a Hermite--Biehler function, which means that
$|E(z)|>|E(\bar z)|$ if $\Im z>0$.

\item
The de Branges space $\he$, associated with the
function $E$. It can be defined as the space
of exactly those entire functions $F$ which have the representation
$$
\frac{F(z)}{A(z)} = \sum_n \frac{u_n |b_n|}{t_n -z}
$$
for some sequence $\{u_n\} \in \ell^2$. It is a Hilbert space with the norm given by $\|F\|_{\mathcal{H}(E)} =
\|\{u_n\}\|_{\ell^2}$,
so that $\mathcal{H}(E)$ is essentially the space of the
discrete Cauchy transforms.
We refer to \cite{br} (or \cite{baryak, bbby})
for an alternative (more standard) definition of $\he$ and more background.

\smallskip
\item
Entire function $G$ is given by
\begin{equation}
\label{form-g}
\frac{G(z)}{A(z)} = \varkappa +\sum_n\bigg(\frac{1}{t_n -z} -\frac{1}{t_n}\bigg)
a_n\overline{b}_n.
\end{equation}

\item
The model operator $\T$, defined by the formulas
$$
\cD(\T) := \{F \in \he: \text{there exists}\ c=c(F)\in \BC:
zF-cG\in \he\},
$$
$$
\T F := zF - c G, \qquad F\in \cD(\T).
$$
\end{itemize}

Now the functional model from \cite[Theorem 4.4]{baryak}
combined with \cite[Proposition 2.4]{baryak}
(see also \cite[Section 4]{bbby}) can be stated as follows.

\begin{theoremD}
Any singular rank one perturbation $\LL=\LL(\A, a, b, \varkappa)$
of the selfadjoint operator $\A$
\textup(with simple spectrum and trivial kernel\textup) is unitary equivalent to
the model operator $\T$ whose parameters $\{t_n\} =\{s_n^{-1}\}$,
$A$ and $G$ are related to $a, b, \varkappa$ as above.
Conversely, any function $G$ as above appears in the model
of some rank one perturbation of $\A$.
\end{theoremD}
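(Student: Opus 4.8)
The plan is to construct the unitary $U\colon H\to\he$ explicitly from a spectral basis of $\A$ and then check, essentially term by term, that it carries $\LL$ onto $\T$ together with their domains. First I would diagonalise: since $\A$ has simple spectrum and trivial kernel, fix an orthonormal eigenbasis $\{e_n\}$ with $\A e_n=t_n e_n$ (so $H\cong\ell^2$) and write $a=\sum_n a_n e_n$, $b=\sum_n b_n e_n$ as elements of the extrapolation space $\A H$. Then condition~(1) says exactly that $a,b\in\A H$, condition~$(A)$ is the well-posedness of $\cD(\LL)$, and one checks (by comparing a hypothetical representative $G/A=\sum_n u_n|b_n|/(t_n-z)$ with \eqref{form-g}) that condition~(2) is equivalent to $G\notin\he$, so that the scalar $c(F)$ in the definition of $\T$ is unique. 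After multiplying each $e_n$ by a suitable unimodular constant I may assume $b_n=|b_n|>0$, which leaves the products $a_n\overline{b_n}$ (hence $E$, $B$, $G$) unchanged. Now set
\[
Ue_n:=|b_n|\,\frac{A(z)}{t_n-z},
\]
which is entire because $A$ has a simple zero at $t_n$. By the very definition of $\he$ as the space of discrete Cauchy transforms, $\{Ue_n\}$ is an orthonormal basis of $\he$, so $U$ extends to a unitary map of $H$ onto $\he$, and $(Ux)(z)/A(z)=\sum_n x_n|b_n|/(t_n-z)$ for $x=\sum_n x_n e_n$.

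The heart of the matter is the elementary identity
\[
\frac{z}{t_n(t_n-z)}=\frac1{t_n-z}-\frac1{t_n},
\]
which makes the rank-one part of a vector in $\cD(\LL)$ telescope against $G$. Take $y=y_0+c\,\A^{-1}a$ with $y_0\in\cD(\A)$ and $c\in\BC$, not yet imposing any constraint, so that $x_n=(y_0)_n+ca_n/t_n$. Splitting $zUy/A$ according to $x_n$ and applying the identity to the term carrying $a_n/t_n$, the contribution $c\sum_n a_n|b_n|\bigl(\tfrac1{t_n-z}-\tfrac1{t_n}\bigr)$ that appears cancels exactly the matching part of $cG/A$ read off from \eqref{form-g}, and one is left with
\begin{align*}
\frac{zUy-cG}{A}
&=\sum_n(y_0)_n|b_n|\,\frac{z}{t_n-z}-c\varkappa\\
&=\frac{U(\A y_0)}{A}-\bigl(\langle y_0,b\rangle+c\varkappa\bigr).
\end{align*}
The last split of the sum is legitimate because $\sum_n|(y_0)_n|\,|b_n|<\infty$ by Cauchy--Schwarz (since $y_0\in\cD(\A)$ and $b\in\A H$); crucially, absolute convergence of $\sum_n x_n|b_n|$ is never needed. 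Hence $zUy-cG=U(\A y_0)-\bigl(\langle y_0,b\rangle+c\varkappa\bigr)A$, and since $A\notin\he$ (because $A/A\equiv1$ does not tend to $0$ at infinity, whereas $F/A$ does for every $F\in\he$), this lies in $\he$ if and only if $\langle y_0,b\rangle+c\varkappa=0$ --- which is precisely the condition defining $\cD(\LL)$. So $U\cD(\LL)\subseteq\cD(\T)$ and $\T Uy=U(\A y_0)=U\LL y$ on $\cD(\LL)$.

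For the opposite inclusion I would reverse this. If $F=Ux\in\cD(\T)$ with scalar $c=c(F)$, then $(zF-cG)/A$ is the discrete Cauchy transform $\sum_n v_n|b_n|/(t_n-z)$ of some $\{v_n\}\in\ell^2$; comparing principal parts of both sides at each pole $t_n$ gives $t_n x_n=v_n+ca_n$, i.e.\ $x=y_0+c\,\A^{-1}a$ with $y_0:=\sum_n(v_n/t_n)e_n\in\cD(\A)$. Plugging this $y_0$ and $c$ back into the computation above (valid for any such data) shows that $zF-cG\in\he$ forces $\langle y_0,b\rangle+c\varkappa=0$, hence $x\in\cD(\LL)$. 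This yields $U\LL U^{-1}=\T$. The converse statement of the theorem is then immediate: given any $G$ of the form \eqref{form-g} with coefficients obeying (1) and (2), recover $\varkappa=G(0)/A(0)$ and the residues $a_n\overline{b_n}$ of $G/A$, divide by the weights $|b_n|$ prescribed by $E$ to get $a_n$, and put $a=\sum_n a_n e_n$, $b=\sum_n b_n e_n$; conditions (1) and (2) say exactly that $(a,b,\varkappa)$ is admissible $1$-data, so $\LL(\A,a,b,\varkappa)$ is a genuine singular rank one perturbation whose model, by the part already proved, is the given $\T$.

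I do not expect a single estimate to be the real obstacle --- the only analytic inputs are Cauchy--Schwarz and convergence of a couple of partial-fraction expansions. The step demanding the most care is the bookkeeping: keeping straight the roles of $b_n$, $|b_n|$ and $\overline{b_n}$ in the definitions of $\he$, $B$ and $G$; organising the computation through the telescoping identity so that no divergent series is ever invoked (this is exactly what makes $a\in\A H$, rather than $a\in H$, sufficient); and checking that the domain correspondence $U\cD(\LL)=\cD(\T)$ is a genuine bijection and not merely an inclusion.
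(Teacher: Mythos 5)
Your construction is correct and is essentially the standard argument: the paper itself gives no proof of Theorem D but cites \cite{baryak} (Theorem 4.4 and Proposition 2.4), where the unitary is exactly the weighted Cauchy-transform map $e_n \mapsto |b_n|A(z)/(t_n-z)$ you use, and the domain correspondence and intertwining are obtained by the same telescoping computation $zF-cG = U(\A y_0) - (\langle y_0,b\rangle + \varkappa c)A$ together with $A\notin\he$. The points you single out for care --- uniqueness of $c(F)$ via condition (2) (equivalently $G\notin\he$), the normalization $b_n=|b_n|$, and avoiding any divergent series when $a\notin H$ --- are indeed the only delicate ones, and you handle them correctly.
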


The reproducing kernels of the de Branges space $\he$ are given by
\[
K_w(z)= \frac{\overline{E(w)} E(z) - {E(\bar w)} \overline{E(\bar z)}}
{2\pi i(\overline w-z)} =
\frac{\overline{A(w)} B(z) -\overline{B(w)}A(z)}{\pi(z-\overline w)}\, .
\]
One has $K_w\in \he$ and $\langle F, K_w\rangle = F(w)$ for any $w\in\BC$.

If $\sum_n |b_n|^2 =\infty$ or $\sum_n |b_n|^2 <\infty$ and
$\sum_n t_n^{-1} a_n \overline b_n \ne \varkappa$, then the adjoint operator $\LL^*$
is well-defined and also is a singular rank one perturbation of $\A$
(see \cite[Proposition 2.2]{baryak}).
Moreover, in this case the eigenvectors of $\LL^*$
are mapped by the same unitary equivalence
as in the above theorem onto the
reproducing kernels
$\{K_\lambda\}_{\lambda \in Z_G}$, see \cite[Lemma 5.4]{baryak}.
By $Z_f$ we denote the zero set of an entire function $f$. To avoid
unessential technicalities, we assume that all zeros of $G$ are simple.

\subsection{Strategy of the proof of Theorem~\ref{hhn}}
\label{strat} In view of relation~\eqref{inverse} between bounded
rank one perturbations of compact normal operators and singular
rank one perturbations, we can solve an equivalent problem for
singular rank one perturbations. Let $\A_0$ be a compact
selfadjoint operator with simple spectrum $\{s_n\}$, $s_n\ne 0$
(we can identify $\A_0$ with a diagonal operator on $\ell^2$) such
that $t_n = s_n^{-1}$ satisfy condition~\eqref{beg1}. Consider the
unbounded operator $\A=\A_0^{-1}$ and assume that we were able to
construct a singular rank one perturbation $\LL = \LL(\A, a,
b,\varkappa)$ such that $\varkappa\ne 0$, $a=\{a_n\}$, $b=\{b_n\}$
satisfy $\sum_n \frac{|a_n|^2+|b_n|^2}{t_n^2} <\infty$ and
\begin{equation}
\label{glt}
\sum_n |t_n^{-1}a_nb_n| = \infty,
\end{equation}
so that  $\LL$ is incomplete with infinite defect  (that is, the
linear span of root vectors of $\LL$ has infinite codimension in
$\ell^2$).  Consider the vectors $a^0 = \A^{-1}a = \{t_n^{-1}
a_n\}$ and $b^0 = \A^{-1}b = \{t_n^{-1} b_n\}$ from $\ell^2$.
Then, by \eqref{inverse}
$$
\LL_0 = \LL^{-1} = \A_0 - \varkappa^{-1} a^0 (b^0)^*
$$
is a bounded rank one perturbation of $\A_0$ whose set of root vectors
has infinite codimension. Note that equality~\eqref{glt} coincides
with $\sum_n |s_n^{-1} a_n^0 b_n^0| = \infty$, i.e., nonexistence of the
first moment for $\LL_0$.

Thus, the statement of Theorem~\ref{hhn} is reduced to an equivalent
problem for singular rank one perturbations. In view of the above functional model,
this problem is equivalent to a completeness problem for a system
of reproducing kernels in de Branges spaces.
Namely, to prove Theorem~\ref{hhn}
we will
need
to construct an entire function $G$ such that
\begin{equation}
\label{glav}
\frac{G(z)}{A(z)} = \varkappa +\sum_n c_n \bigg(\frac{1}{t_n -z} -\frac{1}{t_n}\bigg),
\qquad
\sum_{n} \frac{|c_n|}{t_n^2} <\infty, \qquad
\sum_n \frac{|c_n|}{|t_n|}= \infty,
\end{equation}
and $\varkappa \ne 0$, but
the system $\{K_\lambda\}_{\lambda \in Z_G}$ has an infinite defect
in $\he$. Then we can define a singular rank one perturbation $\LL$
of $\A$ such that $\LL^*$ is also a well-defined singular rank one perturbation,
but $\LL^*$ will not be complete.

We will construct $G$ of the form $A/S$ where $S$ is a canonical product
of order less than one, whose zeros
form a subset of the set $\{t_n\}$.
Then, necessarily, $\varkappa = 1/S(0)$, $c_n = - 1/S'(t_n)$
and the equation~\eqref{glav} rewrites as
\begin{equation}
\label{inter}
\frac{G(z)}{A(z)} = \frac{1}{S(z)}= \frac{1}{S(0)}
-\sum_{t_n \in Z_S} \frac{1}{S'(t_n)} \bigg(\frac{1}{t_n -z} -\frac{1}{t_n}\bigg).
\end{equation}

In Subsection \ref{prelim1} the relations between conditions
\eqref{beg1} and \eqref{beglog2} will be discussed as well as some equivalent
forms of condition \eqref{beg1}. The function $S$ from \eqref{inter}
will be constructed in Subsection~\ref{keyl}, while in Subsection~\ref{eqfor}
the proof of Theorem~\ref{hhn} will be completed.

\subsection{Discussion of condition~\eqref{beg1}}
\label{prelim1}
We begin with the proof of the fact that
any separated sequence $T=\{t_n\}$, for which \eqref{beg1} does not hold,
satisfies $n_T(r) = O(\log^2 r)$ (and thus is sufficiently sparse).

\begin{lemma}
\label{bon}
Let $R>0$ and let the interval $[R, 2R]$ contain at least $2M$ points $t_k$. Then
there exists $t_n \in [R, 2R]$ such that
$$
\prod_{k:\, R \le t_k \le 2R,\,  k\ne n}
\bigg|\frac{t_k-t_{n}}{t_k}\bigg| \le 2^{-M+1}.
$$
\end{lemma}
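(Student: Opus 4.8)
Let $t_{n_1} < t_{n_2} < \cdots < t_{n_{2M}}$ be (at least) $2M$ of the points $t_k$ lying in $[R,2R]$, and write $p_j := t_{n_j}$ for $1 \le j \le 2M$. For a fixed index $n$ among these, the product in question is
\[
Q(n) := \prod_{k:\, R \le t_k \le 2R,\, k\ne n} \bigg| \frac{t_k - t_n}{t_k} \bigg|
\le \prod_{j:\, p_j \ne t_n} \bigg| \frac{p_j - t_n}{p_j} \bigg|
\le R^{-(2M-1)} \prod_{j:\, p_j \ne t_n} |p_j - t_n|,
\]
since every $p_j \ge R$; here I only keep the factors coming from the $p_j$ (all other factors $|t_k - t_n|/|t_k|$ are bounded by... no — wait, they need not be $\le 1$). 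Let me instead keep \emph{all} the points of $T$ in $[R,2R]$: denote them $R \le \tau_1 < \cdots < \tau_L$ with $L \ge 2M$, and show the bound for the \emph{central} one. The point is that I will pick $n$ so that $t_n$ is near the middle of the cluster, which makes the $|\tau_i - t_n|$ factors well-controlled \emph{and} makes the denominators help.

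The plan is to estimate, for the median index, the geometric mean of $Q(n)$ over a suitable family of choices of $n$, or simply to choose $n$ so that $t_n$ splits the $2M$ points $p_1,\dots,p_{2M}$ into two halves. Concretely, take $t_n = p_M$ (the $M$-th smallest). Then, using $|p_j - p_M| \le R$ for all $j$ (since all lie in an interval of length $R$) and $p_j \ge R$, I get that each factor $|p_j - p_M|/p_j$ with $p_j > p_M$ is at most... again not necessarily small. The genuinely small factors are the ones coming from points $p_j$ \emph{close} to $p_M$: among $2M$ points in an interval of length $R$, the points $p_{M-1},\dots,p_1$ to the left and $p_{M+1},\dots,p_{2M}$ to the right must include many that are within distance $\lesssim R/M$ of $p_M$. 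So first I would run the standard counting argument: among $2M$ points in $[R,2R]$, for each $j \ge 1$ at least one of $p_{M-j}, p_{M+j}$ is within distance $\le (R/M)\cdot(\text{something})$ of $p_M$ — more carefully, the $M$ gaps on the left have total length $\le R$, hence the $i$-th gap from $p_M$ leftward, when sorted, contributes a telescoping product.

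So here is the cleaner route I would actually take. Fix $n$ with $t_n = p_M$. Split the product: $Q(p_M) \le \prod_{j=1}^{M-1} \frac{|p_j - p_M|}{p_j} \cdot \prod_{j=M+1}^{2M} \frac{|p_j - p_M|}{p_j}$ (discarding all factors from points outside $\{p_j\}$ only if they are $\le 1$ — so to be safe I take $\{p_j\}$ to be \emph{all} $t_k$ in $[R,2R]$ and $2M \le L$, keeping every factor). For the left product, $|p_j - p_M| = p_M - p_j$, and since $p_1 < \cdots < p_M$ are increasing with all gaps summing to $\le R$, the numbers $p_M - p_{M-1} < p_M - p_{M-2} < \cdots$ — no, they're increasing. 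The trick: $\prod_{j=1}^{M-1}(p_M - p_j)$ is a product of $M-1$ distinct positive reals lying in $(0,R]$; I want to say it's $\le$ something like $(R)^{M-1}/(M-1)!$ times a factorial... Actually the sharp statement comes from: the values $p_M - p_j$ for $j=1,\dots,M-1$ are $\ge$ (after sorting) the successive partial sums of the gaps, but for an upper bound I use AM–GM on $\sum_{j=1}^{M-1}(p_M-p_j)$ — which can be as large as $(M-1)R$, giving nothing. The right move is instead: $p_M - p_j \le R$ always, and at least \emph{half} of the $j$'s satisfy $p_M - p_j \le R/\sqrt{\cdot}$...

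The main obstacle, and what I'd focus the writeup on, is exactly this combinatorial estimate of $\prod_{j\ne n}|t_j - t_n|$ for a well-chosen central $n$; the clean way is: order the $2M$ points, let $n$ be the median, and observe that the $2M-1$ differences $|p_j - p_n|$ can be grouped so that for each $i=1,\dots,M-1$ the two differences at "distance $i$ from the median" are $\le$ the two partial sums of consecutive gaps, whose product over all $i$ telescopes against $\prod(p_j) \ge R^{2M-1}$ to give the bound $2^{-M+1}$. I would write: since all $2M$ points lie in $[R,2R]$, consecutive gaps $d_i = p_{i+1}-p_i$ satisfy $\sum d_i \le R$; then $|p_j - p_M| \le R$ for each $j$, and more precisely pairing $p_{M-i}$ with $p_{M+i}$ one has $(p_M - p_{M-i})(p_{M+i}-p_M) \le \big(\tfrac{(p_{M+i}-p_{M-i})}{2}\big)^2$, and $\prod_{i=1}^{M-1}(p_{M+i}-p_{M-i}) \le R^{M-1} \cdot 2^{\,?}$... and then divide by $\prod_{j\ne M} p_j \ge R^{2M-1}$. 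Assembling the powers of $2$ and $R$ and checking the bookkeeping carefully to land on exactly $2^{-M+1}$ is the delicate part; everything else is routine.
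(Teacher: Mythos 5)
Your proposal does not close; it is a sequence of partial attempts that you yourself leave unresolved (``assembling the powers of $2$ and $R$ \dots is the delicate part''), so as written there is no proof. The decisive miss happens early: you write that the factors $|t_k-t_n|/|t_k|$ ``need not be $\le 1$'', but they always are, since both $t_k$ and $t_n$ lie in $[R,2R]$, hence $|t_k-t_n|\le R\le t_k$. This single observation means you may discard any subset of the factors, and it reduces the lemma to exhibiting $M-1$ factors that are each $\le 1/2$. That is exactly what the paper does: by pigeonhole one of the two halves $[R,\tfrac32 R]$, $[\tfrac32 R,2R]$ contains at least $M$ of the $2M$ points; take $t_n=t_{n_1}$ to be any point of that half, so the other $M-1$ points $t_{n_2},\dots,t_{n_M}$ of the same half satisfy $|t_{n_j}-t_{n_1}|\le R/2$ and $t_{n_j}\ge R$, giving
\[
\prod_{k:\,R\le t_k\le 2R,\ k\ne n_1}\Big|\frac{t_k-t_{n_1}}{t_k}\Big|\ \le\ \prod_{j=2}^{M}\frac{|t_{n_j}-t_{n_1}|}{t_{n_j}}\ \le\ \Big(\frac{R/2}{R}\Big)^{M-1}=2^{-M+1}.
\]
Because you doubted the ``every factor is $\le 1$'' fact, you felt forced to keep all factors and were driven into the gap-telescoping and AM--GM machinery, which you then could not make land on $2^{-M+1}$.

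For what it is worth, your instinct to take the median $p_M$ of the sorted points can be salvaged, but only by essentially rediscovering the pigeonhole step: if $p_M\le \tfrac32R$ then the $M-1$ points below it lie in $[R,p_M]$ and are within $R/2$ of it; if $p_M>\tfrac32R$ then the $M$ points above it lie in $(p_M,2R]$ and are within $R/2$ of it. Either way one gets $M-1$ factors $\le 1/2$ and the remaining factors are $\le 1$. The telescoping/pairing estimates you sketch instead are not completed and, as your own AM--GM remark shows, the naive versions give nothing; so the argument, in the form submitted, has a genuine gap.
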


\begin{proof}
Clearly, we can choose $M$ points $t_{n_1}, \dots t_{n_M} \in \{t_k\}$
such that $|t_{n_1} - t_{n_j}| \le R/2$, $j=2, \dots, M$.
Hence, we have
$$
\prod_{k:\, R \le t_k \le 2R,\, k\ne n_1}
\bigg|\frac{t_k-t_{n_1}}{t_k}\bigg| \le
\bigg(\frac{R}{2}\bigg)^{M-1} \bigg(\frac{1}{R}\bigg)^{M-1} = 2^{-M+1}
$$
(we dropped the factors for which
 $|t_k -t_{n_1}|>R/2$ since they
are anyway smaller than~1).
\end{proof}

\begin{corollary}
\label{4to5}
Condition~\eqref{beglog2} implies~\eqref{beg1}.
\end{corollary}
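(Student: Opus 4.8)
The plan is to derive \eqref{beg1} from \eqref{beglog2} by combining Lemma~\ref{bon} with a dyadic summation argument and with the elementary observation that every factor occurring in the product in \eqref{beg1} is bounded by~$1$. First I would show that \eqref{beglog2} forces
$\sup_{R>0}\frac{\#\{n:\,R\le|t_n|<2R\}}{\log R}=+\infty$:
if this supremum were a finite constant $C$, then, writing $n_T(2^k)$ as $n_T(1)$ plus the sum of its dyadic increments $\#\{n:\,2^j\le|t_n|<2^{j+1}\}\le C\log 2^j$, one gets $n_T(2^k)\le\const+C\log 2\sum_{j<k}j=O(k^2)$, i.e.\ $n_T(r)=O(\log^2 r)$, contradicting \eqref{beglog2}. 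Consequently there are radii $R_i=2^{j_i}\to\infty$ along which $[R_i,2R_i)\cup(-2R_i,-R_i]$ contains at least $2M_i$ of the points $t_n$, with $M_i/\log R_i\to\infty$; by a pigeonhole step (and, if necessary, replacing $t_n$ by $-t_n$, which affects neither the products in \eqref{beg1} nor the ratios $t_k/t_n$) I may assume that $[R_i,2R_i]$ itself contains at least $M_i$ of the $t_n$.

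Next I would apply Lemma~\ref{bon} to each interval $[R_i,2R_i]$, taking ``$2M$'' in the lemma to be $M_i$: this produces an index $n_i$ with $t_{n_i}\in[R_i,2R_i]$ for which $\prod_{k:\,R_i\le t_k\le 2R_i,\ k\ne n_i}\bigl|(t_k-t_{n_i})/t_k\bigr|\le 2^{-M_i/2+1}$. The key remark is that for $t_n\in[R_i,2R_i]$ every $k$ with $R_i\le t_k\le 2R_i$ satisfies $\tfrac12\le t_k/t_n\le 2$, so the index set of this product is contained in the one appearing in \eqref{beg1}; and every extra factor $|1-t_n/t_k|$ with $\tfrac12\le t_k/t_n\le 2$ is at most $1$, since $|1-x|\le 1$ for $x\in[\tfrac12,2]$. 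Hence passing to the larger product can only decrease it, so $\prod_{k:\,\frac12\le t_k/t_{n_i}\le 2,\ k\ne n_i}\bigl|(t_k-t_{n_i})/t_k\bigr|\le 2^{-M_i/2+1}$ as well.

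Finally, since $|t_{n_i}|\le 2R_i$, for any fixed $N>0$ one obtains $|t_{n_i}|^N\prod_{k:\,\frac12\le t_k/t_{n_i}\le 2,\ k\ne n_i}\bigl|(t_k-t_{n_i})/t_k\bigr|\le(2R_i)^N\,2^{-M_i/2+1}\to 0$ as $i\to\infty$, because $M_i/\log R_i\to\infty$ makes $2^{-M_i/2}$ decay faster than any fixed power of $R_i$; since $|t_{n_i}|\to\infty$, this is exactly the assertion that the $\liminf$ in \eqref{beg1} vanishes. I expect the only mildly delicate points to be the dyadic conversion in the first step (turning $n_T(r)=O(\log^2 r)$ into a bound on the dyadic increments of $n_T$) and the bookkeeping of which side of the origin carries the mass; both are routine, the latter being harmless because \eqref{beg1} is invariant under $t_n\mapsto -t_n$.
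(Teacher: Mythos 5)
Your argument is correct and is essentially the contrapositive restatement of the paper's own proof: both rest on Lemma~\ref{bon} applied to dyadic intervals, combined with the observation that the extra factors $|(t_k-t_{n})/t_k|$ with $\tfrac12\le t_k/t_n\le 2$ are bounded by $1$, so that the product over a dyadic block dominates the product in \eqref{beg1}. The two delicate points you flag (the dyadic conversion of $n_T(r)=O(\log^2 r)$ and the sign bookkeeping) are indeed routine and you handle them correctly.
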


\begin{proof}
Assume that \eqref{beg1} is not satisfied.
Given $R>0$, put
$M=M(R)=[M'(R)/2]$, where $M'(R)$ is the number of points $t_k$ in
the interval $[R,2R]$. Then, by Lemma \ref{bon}, $R^N
2^{-M(R)}\gtrsim 1$ for some $N$ which is independent on $R$.
Hence $M(R) = O(\log R)$. Thus, we conclude that, for any $m\in
\NN$, there is always no more than $O(m)$ points
 $t_n$ between
$2^m$ and $2^{m+1}$, and so $n_T(r) = O(\log^2 r)$,
which contradicts \eqref{beglog2}.
\end{proof}

\begin{remark_}
{\rm
One can rewrite \eqref{beg1} in equivalent ways.
Recall that \textit{the Krein class} consists
% \marginpar{Dat' pozzhe opredelenie?}
of entire functions $F$ which are real on
$\BR$, have only simple and real zeros and for some positive
integer $k$ and some polynomial $P$ of degree at most $k$ satisfy the
following absolutely convergent expansion:
$$
\frac{1}{F(z)} = P(z) + \sum_n \frac{1}{F'(t_n)}
\bigg(\frac{1}{z-t_n} +\frac{1}{t_n} +\dots +
\frac{z^{k-1}}{t_n^k} \bigg),
$$
where $s_n$ are zeros of $F$.

It is not difficult to see that condition \eqref{beg1} is equivalent
to any of the following properties $(i)$ and $(ii)$:
\smallskip

$(i)$ There exists an entire function $Q$ of order less than one,
whose zeros are simple and lie in the set $\{t_n\}$, such that for any $N$ we
have $\liminf_{t_n\to\infty, t_n\in Z_Q} |t_n^N Q'(t_n)| =0$
(the lower limit is taken here over those $t_n$ which are zeros of $Q$).
\smallskip

$(ii)$ There exists a subsequence of $\{t_n\}$, which is not the zero set
of a function in the Krein class.
\smallskip

\noindent For instance, to prove that \eqref{beg1} implies $(i)$, it suffices to put
\[
Q(z) = \prod_k\; \prod_{m:\; t_{n_k}/2 \le t_m\le t_{n_k}} \bigg(1-\frac z {t_m}\bigg),
\]
where the sequence $\{n_k\}$ grows fast enough
(by the above proof of Lemma~\ref{hon},
this product defines a zero order entire function).
We leave the details to the reader.
}
\end{remark_}

\subsection{A key lemma}
\label{keyl}
To construct a function $S$ satisfying
\eqref{inter} we will need the following lemma, which is the main
technical step in the proof of Theorem~\ref{hhn}.

\begin{lemma}
\label{hon}
Under the hypothesis of Theorem \ref{hhn}, there
exists a canonical product $S$
of order less than one with zeros in the set $\{t_n\}$ such that
$$
\sum_{t_n \in Z_S} \frac{1}{t_n^2 |S'(t_n)|}<\infty, \qquad
\sum_{t_n \in Z_S} \frac{1}{|t_n S'(t_n)|} =\infty.
$$
\end{lemma}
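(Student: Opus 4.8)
The plan is to realize $S$ as a convergent product $S=\prod_{k\ge 1}S_k$ of finite ``blocks'' $S_k(z)=\prod_{t_m\in C_k}\bigl(1-z/t_m\bigr)$, one block for each term of a very rapidly increasing sequence of scales $R_k=|t_{n_k}|$, where the finite sets $C_k$ (each containing a ``center'' $t_{n_k}$) are carved out of the windows $\{t_m:\tfrac12\le t_m/t_{n_k}\le 2\}$ supplied by hypothesis~\eqref{beg1}. Since $S$ will have genus zero, for a zero $t_n\in C_k$ one has $t_n^2S'(t_n)=-t_n\prod_{t_m\in Z_S,\,t_m\ne t_n}(1-t_n/t_m)$, so that, writing $\Pi_n:=\prod_{t_m\in Z_S,\,t_m\ne t_n}\frac{|t_m-t_n|}{|t_m|}$, one gets $\frac1{|t_nS'(t_n)|}=\frac1{\Pi_n}$ and $\frac1{t_n^2|S'(t_n)|}=\frac1{|t_n|\Pi_n}$. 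Factor $\Pi_n=A_n\cdot B_n^-\cdot B_n^+$, where $A_n$ collects the factors with $t_m\in C_k$, $B_n^-$ those with $t_m\in C_j$, $j<k$, and $B_n^+$ those with $t_m\in C_j$, $j>k$. Then the whole lemma reduces, block by block, to the two requirements $\sum_k\frac1{A_{n_k}B_{n_k}^-B_{n_k}^+}=\infty$ (which already gives the divergence, from the centers alone) and $\sum_k\sum_{t_n\in C_k}\frac1{|t_n|A_nB_n^-B_n^+}<\infty$ (the convergence), plus convergence of the product $S$ and an order bound.

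The routine ingredients are three elementary estimates. (i) If the scales grow fast enough — say $R_{k+1}$ exceeds an explicit function of $R_k$, of $\sigma_{k-1}:=\sum_{j<k}|C_j|$ and of $\delta:=\inf_{m\ne k}|t_m-t_k|$ fixed at each stage — then the windows are pairwise disjoint, $B_n^+$ stays between two positive absolute constants for every $t_n\in C_k$ (because $\sum_{j>k}|C_j|R_k/R_j$ is dominated by its first term), and $B_n^-=\prod_{j<k}\prod_{t_m\in C_j}\frac{t_n-t_m}{t_m}$ is, up to a bounded factor, one and the same large number for all $t_n\in C_k$, roughly $R_k^{\sigma_{k-1}}\big/\prod_{j<k}R_j^{|C_j|}$. (ii) Ordering the points of a block, using $|t_{(l)}-t_{(i)}|\ge|l-i|\delta$ and $\sum_i\binom{|C_k|-1}{i}=2^{|C_k|-1}$, one gets $\sum_{t_n\in C_k}\frac1{A_n}\le\frac{(2R_k/\delta)^{|C_k|-1}}{(|C_k|-1)!}$, a bound that only improves if $C_k$ is taken more spread out inside its window. (iii) From $|t_m-t_n|\ge\delta$ one bounds the window size $\nu_k$, and the averaging inequality ``the $L$ largest of $\nu$ nonnegative numbers sum to at least $\tfrac L\nu$ of their total'' shows that the product of the $L$ smallest among the $\nu_k$ window factors $|1-t_{n_k}/t_m|$ is at most $R_k^{-LN_k/\nu_k}$, where $N_k$ is defined by (the window product)$=R_k^{-N_k}$; note $N_k/\nu_k\le 1+o(1)$.

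The heart of the argument is the inductive choice of the scales and of the sets $C_k$. Having fixed $C_1,\dots,C_{k-1}$ — hence $\sigma_{k-1}$ and $B^-$ — I would invoke \eqref{beg1} in the form ``for every $N$ and every $R$ there is an index $n$ with $|t_n|>R$ and window product $<|t_n|^{-N}$'' to choose $n_k$ with $R_k=|t_{n_k}|$ enormous and $N_k$ correspondingly large, and then select $C_k$ to consist of $t_{n_k}$ together with about $\sigma_{k-1}\nu_k/N_k$ further window points, spread inside the window, so that: (a) $A_{n_k}$ has size of order $1/B^-$, which by (ii)–(iii) forces $\Pi_{n_k}$ to be bounded and hence $\sum_k 1/\Pi_{n_k}\ge\sum_k\const=\infty$; and (b) the powers of $R_k$ appearing in $\sum_{t_n\in C_k}\frac1{A_n}$ (of order $R_k^{\,|C_k|-1-\sigma_{k-1}}$ up to combinatorial factors) cancel those coming from $B^-$, leaving $\sum_{t_n\in C_k}\frac1{|t_n|A_nB^-B^+}$ bounded by a quantity depending only on $R_{k-1}$, $\sigma_{k-1}$, $\delta$ divided by a large power of $R_k$; then picking $R_k$ large enough makes this $\le 2^{-k}$ and also forces $\sum_k|C_k|/R_k<\infty$, so that $S$ converges, is of genus zero, and $n_S(r)=\sum_{R_k<r}|C_k|$ grows only like a power of $\log r$, so $S$ lies in the class $\slw$, in particular has order less than one. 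The delicate point, and what I expect to be the main obstacle, is precisely this simultaneous calibration: \eqref{beg1} supplies fat, concentrated windows but does not let us prescribe the ratio $N_k/\nu_k$, so $|C_k|$ must be permitted to grow (roughly geometrically) with $k$ while remaining small enough, relative to $\sigma_{k-1}$ and to the width over which $C_k$ is spread, that the $R_k$-powers in the convergence estimate cancel rather than accumulate — achieving this balance is where essentially all the work lies.
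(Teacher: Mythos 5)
Your overall architecture matches the paper's: build $S=\prod_k S_{T_k}$ from finite blocks carved out of the windows $[t_{n_k}/2,\,2t_{n_k}]$ supplied by \eqref{beg1}, with scales growing so fast that the earlier blocks act like a fixed polynomial $|U_{k-1}(z)|\sim q_k|z|^{N_k}$ and the later blocks contribute a factor between $1/2$ and $2$, so that everything reduces to intra-block estimates. The reduction $1/|t_nS'(t_n)|=1/\Pi_n$ and the splitting $\Pi_n=A_nB_n^-B_n^+$ are also fine. But the heart of the lemma is exactly the step you defer: how to choose the subset $C_k$ of a \emph{given} window so that simultaneously (a) some point of $C_k$ contributes at least a fixed constant to the divergent sum and (b) the block's contribution to the convergent sum is controlled. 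Your recipe --- take $t_{n_k}$ plus about $\sigma_{k-1}\nu_k/N_k$ further points ``spread inside the window'' --- does not work as stated. First, you cannot spread the points: their positions are given, and \eqref{beg1} is typically realized by a cluster of points within distance $O(\nu_k\delta)$ of $t_{n_k}$ (this is precisely the mechanism of Lemma \ref{bon}), in which case every admissible subset is concentrated. Second, and more seriously, for such a concentrated block of size $L+1$ the powers of $R_k$ in requirement (b) cancel exactly: with $B_n^-\asymp R_k^{\sigma_{k-1}}/P_{k-1}$, where $P_{k-1}=\prod_{j<k}\prod_{t_m\in C_j}|t_m|$, and $\sum_{t_n\in C_k}1/A_n\asymp(R_k/\delta)^{L}\,2^{L}/L!$, the condition $\frac{1}{R_kB^-}\sum_{t_n\in C_k}1/A_n\le2^{-k}$ becomes $\frac{(2/\delta)^{L}}{L!}\lesssim 2^{-k}/P_{k-1}$ once $L=\sigma_{k-1}+1$ (the smallest $L$ for which (a) is achievable). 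This contains no $R_k$ at all; it is a constraint on the already-fixed data $\delta$, $\sigma_{k-1}$, $P_{k-1}$, and it can fail --- indeed it forces $R_{k-1}$ to be bounded in terms of $k$, in conflict with the largeness of $R_{k-1}$ demanded at the previous stage. So the ``simultaneous calibration'' you flag as the main obstacle is a genuine obstruction to this particular recipe, not merely unfinished bookkeeping.

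The paper resolves this non-quantitatively. Take $T_k$ to be a \emph{minimal} subset of the window with $\sum_{t_n\in T_k}\frac{1}{|t_nU_{k-1}(t_n)S'_{T_k}(t_n)|}>1$; such a set exists because the full window gives a sum $\ge t_{n_k}$ by \eqref{beg1}, while the singleton $\{t_{n_k}\}$ gives a sum $\ll1$. Minimality means that deleting any $t_j\in T_k$ drops the sum to $\le1$; since deletion multiplies each remaining term by $|t_j-t_n|/|t_j|\gtrsim 1/|t_n|$ (here the separation $\inf_{n\ne j}|t_n-t_j|>0$ enters), this automatically yields $\sum_{t_n\in T_k}\frac{1}{t_n^2|U_{k-1}(t_n)S'_{T_k}(t_n)|}\lesssim1$ uniformly in $k$ --- no calibration of $|T_k|$ against $\sigma_{k-1}$, $\nu_k$, $N_k$ is needed. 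Note also that the paper does not aim for your per-block bound $2^{-k}$: the uniform $O(1)$ bound per block leaves open the possibility that $\sum_{t_n\in Z_S} t_n^{-2}|S'(t_n)|^{-1}$ still diverges, and this is repaired at the very end by replacing $S$ with $(z-t_1)S$, which multiplies each $|S'(t_n)|$ by $\asymp|t_n|$, turning the $O(1)$ per block into $O(1/t_{n_k})$ (summable) while preserving the divergence of the other sum. You would need both of these ideas, or substitutes for them, to complete your argument.
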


\begin{proof}[Proof]
Without loss of generality we assume that $t_n>0$
and that~\eqref{beg1} holds.
Put
$S=\prod_k S_{T_k}$,
where
\begin{equation}
\label{sk}
S_{T_k}(z) = \prod_{t_n \in T_k}\bigg( 1- \frac{z}{t_n} \bigg),
\end{equation}
and
$T_k \subset \{t_n: n\ne 1, \; t_{n_k}/2 \le t_n\le 2 t_{n_k}\}$,
where $n_k$ go rapidly to infinity
so that, for any $N>0$,
\beqn
\label{tnk-prod}
t_{n_k}^N \cdot \enspace \prod_{l \ne n_k:\,\, t_{n_k}/2  \le t_l \le 2 t_{n_k}}
\Big|\frac{t_{n_k} -t_l}{t_l}\Big| \to 0, \qquad k\to \infty.
\neqn
We will show that for an appropriate choice of $\{T_k\}$ either $S(z)$ or $(z-t_1)S(z)$ is the
desired function.

The sets $T_k$ will be chosen inductively.
Suppose that the sets $T_1, \dots T_{k-1}$ have been already chosen
and put $U_{k-1} = \prod_{j=1}^{k-1} S_{T_j}$. Then, clearly,
$|U_{k-1}(z)| \sim q_{k}|z|^{N_k}$, $|z|\to \infty$, for some constants $q_k>0$
and $N_k\in\mathbb{N}$.

Let us first consider the case when
$T_k = \{t_n:\, t_{n_k} /2  \le t_n \le 2 t_{n_k} \}$,
assuming that $n_k$ is sufficiently large. Then, by \eqref{tnk-prod},
the corresponding function $S_{T_k}$ satisfies
\[
t_{n_k}^2 |U_{k-1}(t_{n_k})
S'_{T_k} (t_{n_k})| = t_{n_k} |U_{k-1} (t_{n_k})|
\prod_{l \ne n_k:\,\, t_{n_k}/2  \le t_l \le 2 t_{n_k}}
\Big|\frac{t_{n_k} -t_l}{t_l}\Big| <1.
\]
In particular,
$$
\sum_{t_n\in T_k} \frac{1}{|t_n U_{k-1}(t_n) S_{T_k}'(t_n)|} \ge
t_{n_k}\gg 1.
$$

Now consider another extreme case where $T_k$
consists only of the point $t_{n_k}$,
that is, $T_k = \{t_{n_k}\}$.
Then $S_{T_k}(z)= 1-z/t_{n_k}$, and we have
\[
\sum_{t_n\in T_k} \frac{1}{|t_n U_{k-1}(t_n) S_{T_k}'(t_n)|} =\frac{1}{|U_{k-1}(t_{n_k})|} \ll 1.
\]
Hence, there exists a (not necessarily unique)
set $T_k \subset \{t_n:\, t_{n_k} /2  \le t_n \le 2 t_{n_k} \}$ such that
$S_{T_k}$ satisfies
\begin{equation}
\label{zlt}
\sum_{t_n\in T_k} \frac{1}{|t_n U_{k-1}(t_n) S_{T_k}'(t_n)|}>1
\end{equation}
and $T_k$ is minimal in the sense that the estimate \eqref{zlt}
no longer holds if one removes any point from $T_k$. This will be our choice of $T_k$.

Now let $t_j$ be any point in $T_k$ and let
$\tilde S_j = S_{T_k\setminus\{t_j\}}$. Then, by the
above property of minimality,
$$
\begin{aligned}
1\ge \sum_{t_n\in T_k, \, t_n\ne t_j}
\frac{1}{|t_n U_{k-1}(t_n) \tilde S_j'(t_n)|}
& =
\sum_{t_n\in T_k, \, t_n\ne t_j}
\frac{1}{|t_n U_{k-1}(t_n) S_{T_k}'(t_n)|}\cdot \frac{|t_j-t_n|}{|t_j|}
\\
& \gtrsim
\sum_{t_n\in T_k, \, t_n\ne t_j}
\frac{1}{t_n^2 |U_{k-1}(t_n) S_{T_k}'(t_n)|},
\end{aligned}
$$
where the last inequality follows from the hypothesis
$\inf_{n\ne j} |t_n-t_j| >0$. Since $t_j \in T_k$ was arbitrary,
we conclude that, uniformly with respect to $k$,
\begin{equation}
\label{zle}
\sum_{t_n\in T_k}
\frac{1}{t_n^2 |U_{k-1}(t_n) S_{T_k}'(t_n)|} \lesssim  1.
\end{equation}

Obviously, by choosing $t_{n_k}$ to grow sufficiently fast, we may achieve
that, for the function $S=\prod_k S_{T_k}$
the factors $S_{T_j}$ with $j>k$ almost do not influence the product
at the points $t_n\in T_k$ so that $\frac{1}{2} \le \prod_{j=k+1}^\infty
|S_{T_j}(t_n)| \le 2$ for $t_n \in [t_{n_k}/2 , 2t_{n_k}]$. Then
\begin{equation}
\label{zle1}
\frac{1}{2}
\sum_{t_n\in T_k}
\frac{1}{|t_n S'(t_n)|}
\le
\sum_{t_n\in T_k}
\frac{1}{ |t_n U_{k-1}(t_n) S_{T_k}'(t_n)|}
\le 2 \sum_{t_n\in T_k}
\frac{1}{|t_n S'(t_n)|},
\end{equation}
\begin{equation}
\label{zle2}
\frac{1}{2}
\sum_{t_n\in T_k}
\frac{1}{t_n^2 |S'(t_n)|}
\le
\sum_{t_n\in T_k}
\frac{1}{ t_n^2 |U_{k-1}(t_n) S_{T_k}'(t_n)|}
\le 2 \sum_{t_n\in T_k}
\frac{1}{t_n^2 |S'(t_n)|}.
\end{equation}
Also, it follows from Lemma \ref{bon} and from \eqref{zle} that
$\# T_k$, the number of elements in $T_k$,
satisfies $\# T_k \lesssim N_k \ln t_{n_k} + \ln q_k$.
Since $N_k$ and $q_k$ do not depend on the choice of $t_{n_k}$,
the function $S$  will be of zero order if $t_{n_k}$ grow sufficiently fast.

By the construction of $S_{T_k}$ (namely, by \eqref{zlt} and
\eqref{zle1}) we clearly have
$$
\sum_{t_n \in Z_S} \frac{1}{|t_n S'(t_n)|} =\infty.
$$
If, at the same time,
$$
\sum_{t_n \in Z_S} \frac{1}{t_n^2 |S'(t_n)|} < \infty,
$$
then our construction is completed. If the latter sum is also infinite, then
put $\tilde S=(z-t_1) S = (z-t_1) \prod_k S_{T_k}$. Then, clearly,
$|\tilde S'(t_n)| \asymp |t_n S'(t_n)|$, $t_n \in Z_S$,
and so, by \eqref{zle2} and \eqref{zle}, we have
$$
\sum_{t_n \in Z_S} \frac{1}{t_n^2 |\tilde S'(t_n)|}
\lesssim
\sum_k \frac{1}{t_{n_k}} \sum_{t_n\in T_k} \frac{1}{t_n^2|U_{k-1}(t_n) S'_{T_k}(t_n)|}
\lesssim \sum_k \frac{1}{t_{n_k}}<\infty.
$$
Thus, $\tilde S$ has the required properties.
\end{proof}

\subsection{End of the proof of Theorem \ref{hhn}}
\label{eqfor}
Let $S$ be the entire function constructed in Lemma \ref{hon}.
Then the function $G=A/S$ is entire.
The proof of \eqref{inter} follows by the standard interpolation series argument.
Indeed, the series in the right-hand side of \eqref{inter} converges absolutely
by the conditions on $S$.
Note that
$$
H(z) =
\frac{1}{S(z)} - \frac{1}{S(0)}
-\sum_{t_n \in Z_S} \frac{1}{S'(t_n)} \bigg(\frac{1}{z- t_n} +\frac{1}{t_n}\bigg)
$$
is an entire function (the poles disappear). Since $S$ is of order less
than one with real zeros, we conclude that $1/S$ is of Smirnov class in the upper and
in the lower half-planes, as well as the regularized Cauchy transform
in the right-hand side of
\eqref{inter}. Hence, by the classical
theorem of M.G. Krein (see, e.g., \cite[Part II, Chapter 1]{hj}),
$H$ is an entire function of zero exponential type. Note also that $|H(iy)|=o(|y|)$,
$|y| \to \infty$, whence $H$ is a constant. Since $H(0)=0$, we finally
get that $H\equiv 0$.

Thus, $G$ satisfies \eqref{glav}.
Put $a_n=|c_n|^{1/2}$, $b_n=c_n/|c_n|^{1/2}$.
Since $\sum_n |a_nb_nt_n^{-1}| =\infty$, we conclude that
$\sum_n |a_n|^2 =\sum_n |b_n|^2=\infty$.
By Theorem~D, the function $G$
corresponds to the rank one perturbation $\LL$ of $\A$,
generated by $\{a_n\}$, $\{b_n\}$ and $\varkappa = 1/S(0)$. Moreover,
$\LL^*$ also is a well-defined singular rank one perturbation
of $\A$ and the system of its eigenvectors  is unitary equivalent to the system
of reproducing kernels $\{K_\lambda\}_{\lambda \in Z_G}$ in $\he$.  It remains to see that
the latter system is not complete in $\he$.
However, it is a basic fact of the de Branges theory
that $\{K_\lambda\}_{\lambda \in Z_A}$ is an orthogonal basis of $\he$
(see \cite[Theorem 22]{br}). Hence,
$\{K_\lambda\}_{\lambda \in Z_A\setminus Z_S}$
is incomplete with infinite defect. Theorem \ref{hhn} is proved.
\qed

\

{\obeylines
Anton D. Baranov
Department of Mathematics and Mechanics, Saint Petersburg State University
28, Universitetski pr., St. Petersburg 198504, Russia
\vskip-.45cm
\phantom{r} and
National Research University  Higher School of Economics, St. Petersburg,
Russia, \enspace email: anton.d.baranov@gmail.com
}

\

\bigskip

{\obeylines
Dmitry V. Yakubovich
Departamento de Matem\'{a}ticas, Universidad Autonoma de Madrid
Cantoblanco 28049 (Madrid) Spain
\vskip-.45cm
\phantom{r} and
Instituto de Ciencias Matem\'{a}ticas (CSIC - UAM - UC3M - UCM) \enspace
email: dmitry.yakubovich@uam.es
}


\begin{thebibliography}{99}


\bibitem{AddMit}
J. Adduci, B. Mityagin,
Root system of a perturbation of a selfadjoint operator with discrete spectrum.
\textit{Integr. Equat. Oper. Theory} 73 (2012), no. 2, 153--175.

\bibitem{bbby}
A. Baranov, Yu. Belov, A. Borichev,  D. Yakubovich,
Recent developments in spectral synthesis for exponential systems and for non-self-adjoint
operators, \textit{Recent Trends in Analysis}  (Proceedings of the
Conference in Honor of Nikolai Nikolski, Bordeaux, 2011),
Theta Foundation, Bucharest, 2013, pp.~17--34.

\bibitem{baryak}
A. Baranov, D. Yakubovich,
Completeness and spectral synthesis of nonselfadjoint one-dimensional
perturbations of selfadjoint operators, arXiv:1212.5965.

\bibitem{baryak2}
A. Baranov, D. Yakubovich,
One-dimensional perturbations of unbounded selfadjoint operators with empty spectrum,
{\it J. Math. Anal. Appl.} 424 (2015), no. 2, 1404--1424.

\bibitem{bb} A. Baranov, Yu. Belov,                                           % OK
Systems of reproducing kernels and their biorthogonal:
completeness or incompleteness?
\textit{Int. Math. Res. Notices}
2011, 22 (2011), 5076--5108.

\bibitem{bbb} A. Baranov, Yu. Belov, A. Borichev,                                       % OK
Hereditary completeness for systems of exponentials and reproducing kernels,
\textit{Adv. Math.} 235, no. 1 (2013), 525--554.

\bibitem{bbb1}
A. Baranov, Yu. Belov, A. Borichev,
Spectral synthesis in de Branges spaces,
{\it Geom. Funct. Anal.} ({\it GAFA}) 25 (2015), 2, 417--452.

\bibitem{bel-men-seip}
Yu. Belov, T.Y. Mengestie, K. Seip, Discrete Hilbert transforms on sparse sequences,
\textit{Proc. London Math. Soc.} 103, no. 1 (2011), 73--105.

\bibitem{br} L. de Branges,
{\it Hilbert Spaces of Entire Functions}, Prentice--Hall, Englewood Cliffs, 1968.

\bibitem{boas-book}
R. P. Boas, Jr.,
\textit{Entire Functions},
Academic Press Inc., New York, 1954.

\bibitem{Chihara1989}
T. S. Chihara,
Hamburger moment problems and orthogonal polynomials,
\textit{ Trans. Amer. Math. Soc.} 315 (1989), no. 1, 189--203.

\bibitem{FangXia}
Q. Fang, J. Xia,
Invariant subspaces for certain finite-rank perturbations of diagonal operators,
\textit{J. Funct. Anal.} 263 (2012), no. 5, 1356--1377.

\bibitem{Dunf_Schwarz}                          % OK
N. Dunford, J.T. Schwartz,
{\it Linear Operators, Vol. 2: Spectral Theory},
Interscience, New York, 1963.

\bibitem{Dur}
P.L. Duren, \textit{Theory of $H^p$ Spaces},
Pure and Applied Mathematics, Vol. 38. Acad. Press, New York--London, 1970.  %  xii+258 pp

\bibitem{Gohb_Krein}
I. Gohberg, M. Krein,
{\it Introduction to the Theory of
Linear Nonselfadjoint Operators}, Amer. Math. Soc., Providence, R.I., 1969.

\bibitem{FoiasJungPearcy}
C. Foia\c{s}, I.B. Jung, E. Ko, C. Pearcy, Spectral
decomposability of rank-one perturbations of normal operators,
{\it J. Math. Anal. Appl.} 375 (2011), 602--609.

\bibitem{Gohb_Kr_Volterr}           % OK
I. Gohberg, M. Krein,
\textit{Theory and Applications of Volterra Operators in Hilbert
Space}, Amer. Math. Soc., Providence, R.I., 1970.

\bibitem{hj}
V. Havin, B. J\"oricke, \textit{The Uncertainty Principle in Harmonic Analysis},
Springer-Verlag, Berlin, 1994.

\bibitem{Ionascu}                                     % OK
E. Ionascu, Rank-one perturbations of diagonal operators,
\textit{Integral Equations Operator Theory} 39 (2001), no. 4, 421--440.

\bibitem{Keldys}                                     % OK
M.V. Keldy\v s, On the characteristic values and characteristic
functions of certain classes of non-self-adjoint equations,
\textit{Doklady Akad. Nauk SSSR (N.S.)}
77, (1951), 11--14 (Russian); Engl. transl. in  \cite{markus-book}.

\bibitem{Keldys71}                                     % OK
M.V. Keldy\v s, On the completeness of the eigenfunctions of some
classes of non-selfadjoint linear operators,
{\it Russ. Math. Surv.} 26 (1971), 4, 15--41.

\bibitem{Klimyk2005}
A.U. Klimyk, Spectra of observables in the $q$-oscillator and
$q$-analogue of the Fourier transform. \textit{SIGMA Symmetry
Integrability Geom. Methods Appl.} 1 (2005), Paper 008, 17 pp.

\bibitem{Koek-L-Sw-book}
R. Koekoek, P.A. Lesky, R.F. Swarttouw,
\textit{Hypergeometric Orthogonal Polynomials and their q-Analogues},
Springer Monographs in Mathematics, Springer, Berlin, 2010.

\bibitem{Mats61}                                            % OK
V.I. Macaev,
On a class of completely continuous
operators, {\it Dokl. Akad. Nauk SSSR} 139 (1961),
3, 548--551; English transl. in {\it Soviet Math. Dokl.} 2
(1961), 972--975.

\bibitem{Mats64}                                            % OK
V.I. Macaev,
Some teorems on completeness of root subspaces of completely continuous operators,
{\it Dokl. Akad. Nauk SSSR} 155 (1964), 273--276.
English transl. in {\it Soviet Math. Dokl.} 5
(1964), 396--399.

\bibitem{MatsMog71}
V.I. Macaev, E.Z. Mogul'ski,
Certain criteria for the multiple completeness of the system
of eigen- and associated vectors
of polynomial operator pencils,
\textit{Teor. Funkcii, Funkcional. Anal. i Prilozen.} 13 (1971), 3--45 (Russian).

\bibitem{MatsMogul76}                              % OK
V.I. Macaev, E.Z. Mogul'ski, The completeness of weak
perturbation of the selfadjoint operators,
{\it Zap. Nauchn. Sem. LOMI} 56 (1976), 90--103;
English transl. in {\it J. Sov. Math.}  14 (1980), no. 2, 1091--1103.

\bibitem{mak-polt} N. Makarov, A. Poltoratski,                 % OK
Meromorphic inner functions, Toeplitz kernels and the uncertainty
principle, \textit{Perspectives in Analysis, Math. Phys. Stud.}
27, Springer, Berlin, 2005, 185--252.


\bibitem{markus-book}                          % OK
A.S. Markus, \textit{Introduction to the Spectral Theory of Polynomial Operator Pencils},
% Translated from the Russian by H. H. McFaden. Translation edited by Ben Silver. With an appendix by M. V. Keldysh.
AMS Transl. Math. Monographs, 71, 1988.

\bibitem{Martin}                 % OK
R.T.W. Martin, Representation of simple symmetric operators with
deficiency indices $(1,1)$ in de Branges space.
\textit{Complex Anal. Oper. Theory} 5 (2011), 2, 545--577.

\bibitem{Polya23}
G. P\'olya, Bemerkungen \"uber unendliche Folgen und ganze Funktionen. (German)
\textit{Math. Ann.} 88 (1923), no. 3--4, 169--183.

\bibitem{Radz82}
G.V. Radzievskii, The problem of the completeness
of root vectors in the spectral theory of operator-valued functions,
\textit{Russian Math. Surveys} 37 (1982), 2, 91--164.

\bibitem{Shk_1984}                    % OK
A.A. Shkalikov,
Theorems of Tauberian type on the distribution of zeros of holomorphic functions.
\textit{Mat. Sb. \textup(N.S.\textup)} 123(165) (1984), no. 3, 317--347;
English transl. in \textit{Math. USSR Sb.} 51, no. 2 (1985), no. 3, 315--344.

\bibitem{Shk_2010}                    % OK
A.A. Shkalikov,
On the basis property of root vectors of a perturbed
self-adjoint operator, {\it Proc. Steklov Inst. Math. } {269} (2010), 284--298.

\bibitem{Silva-Toloza}          % OK
L.O. Silva, J.H. Toloza,
On the spectral characterization of entire operators with deficiency indices (1,1),
{\it J. Math. Anal. Appl.}, 367 (2010), 2, 360--373.

\bibitem{Silva-Teschl-Toloza}          % OK
L.O. Silva, G. Teschl, J.H. Toloza,
Singular Schr\"odinger operators as self-adjoint extensions of N-entire operators.
{\it Proc. Amer. Math. Soc.}, 143 (2015), 5, 2103--2115.

\bibitem{Weidmann}
J. Weidmann, {\it Linear Operators in Hilbert spaces},
Graduate Texts in Mathematics, 68. Springer, New York--Berlin, 1980.
% (transl. from the German).
% xiii+402 pp. ISBN: 0-387-90427-1
% 47 WEI

\bibitem{Wyss}
C. Wyss, Riesz bases for p-subordinate perturbations of normal
operators, {\it J. Funct. Anal.} 258 (2010), no. 1, 208--240.



\end{thebibliography}
\end{document}